\def\@oddrunninghead{Preprint}
\def\@evenrunninghead{Preprint}
\def\@journalname{PREPRINT}
\def\@startpage{1}
\def\@endpage{11}
\renewcommand{\ps@amctitle}{%
  \renewcommand\@oddhead{}
  \let\@evenhead\@oddhead
  \renewcommand\@evenfoot{ ~}
  \let\@oddfoot\@evenfoot
}
\newcommand{\Hom}[0]{\ensuremath{\textsc{Hom}}}
\newcommand{\FulRel}[0]{\ensuremath{\textsc{Ful\mbox{-}Rel}}}
\newcommand{\sHom}[0]{\ensuremath{\textsc{Sur\mbox{-}Hom}}}
\newtheorem{pro}[thm]{Proposition}
\newtheorem{lemma}[thm]{Lemma}
\newtheorem{obser}[thm]{Observation}
\newcommand{\muls}{\ast}
\newcommand{\mulw}{\star}
\newcommand{\mulm}{\circledast}
\newcommand{\overbar}{\overline}
\DeclareMathOperator{\domain}{dom}
\DeclareMathOperator{\image}{img}
\begin{document}
\begin{frontmatter}
\titledata{Relations Between Graphs}{}

\authordata{Jan Hubi\v cka}
{Computer Science Institute of Charles University,
   Univerzita Karlova v Praze,
   Malostransk{\'e} n{\'a}m. 25,
   118 00 Praha 1, 
   Czech Republic}{Jan.Hubicka@mff.cuni.cz}{}
\authordata{J{\"u}rgen Jost}
{Max Planck Institute for Mathematics in the Sciences,
          Inselstrasse 22, D-04103 Leipzig, Germany;
Department of Mathematics, University of Leipzig, 
          D-04081 Leipzig, Germany;
Santa Fe Institute, 1399 Hyde Park Rd., Santa Fe, NM 87501,
          USA}
{jost@mis.mpg.de}
{}
\authordata{Yangjing Long}
{Max Planck Institute for Mathematics in the Sciences,
          Inselstrasse 22, D-04103 Leipzig, Germany;
Department of Mathematics, University of Leipzig, 
          D-04081 Leipzig, Germany;
Santa Fe Institute, 1399 Hyde Park Rd., Santa Fe, NM 87501,
          USA}
{ylong@mis.mpg.de}
{}
\authordata{Peter F.\ Stadler}
{Bioinformatics Group, Department of Computer Science,
          and Interdisciplinary Center for Bioinformatics,
          University of Leipzig,
          H{\"a}rtelstra{\ss}e 16-18, D-04107 Leipzig, Germany;
Max Planck Institute for Mathematics in the Sciences,
          Inselstrasse 22, D-04103 Leipzig, Germany;
Santa Fe Institute, 1399 Hyde Park Rd., Santa Fe, NM 87501, USA;
Fraunhofer Institut f{\"u}r Zelltherapie und Immunologie
          -- IZI Perlickstra{\ss}e 1, D-04103 Leipzig, Germany;
Department of Theoretical Chemistry,
          University of Vienna,
          W{\"a}hringerstra{\ss}e 17, A-1090 Wien, Austria;
Center for non-coding RNA in Technology and Health,
          University of Copenhagen, Gr{\o}nneg{\aa}rdsvej 3, DK-1870
          Frederiksberg C, Denmark
         }
{studla@bioinf.uni-leipzig.de}
{}
\authordata{Ling Yang}
{School of Mathematical Sciences, Fudan University, No.
          220 Handan Rd., 200433, Shanghai, China}
{yanglingfd@fudan.edu.cn}
{}

\keywords{generalized surjective graph homomorphism, 
  R-reduced graph, R-retraction,
  binary relation, multihomomorphism, R-core, cocore}

\msc{05C60, 05C76}

\begin{abstract}
  Given two graphs $G=(V_G,E_G)$ and $H=(V_H,E_H)$, we ask under which
  conditions there is a relation $R\subseteq V_G\times V_H$ that generates
  the edges of $H$ given the structure of the graph $G$. This construction
  can be seen as a form of multihomomorphism. It generalizes surjective
  homomorphisms of graphs and naturally leads to notions of R-retractions,
  R-cores, and R-cocores of graphs. Both R-cores and R-cocores of graphs
  are unique up to isomorphism and can be computed in polynomial time.
\end{abstract}

\end{frontmatter}

\section{Introduction}

\subsection{Motivation}

Graphs are frequently employed to model natural or artificial systems
\cite{Dorogovtsev:03,Newman:10}. In many applications separate graph models
have been constructed for distinct, but at least conceptually related
systems. One might think, e.g., of traffic networks for different means of
transportation (air, ship, road, railroad, bus). In the life sciences,
elaborate network models are considered for gene expression and the
metabolic pathways regulated by these genes, or for the co-occurrence of
protein domains within proteins and the physical interactions of proteins
among each other.

Let us consider an example. Most proteins contain several functional
domains, that is, parts with well-characterized sequence and structure
features that can be understood as functional units. Protein domains for
instance mediate the catalytic activity of an enzyme and they are
responsible for specific binding to small molecules, nucleic acids, or
other proteins. Databases such as \texttt{SuperFamily} compile the domain
composition of a large number of proteins. We can think of these data as a
relation $R\subset D\times P$ between the set $D$ of domains and the set of
$P$ proteins which contain them. Protein-protein interaction networks
(PPIs) have been empirically determined for several species and are among
the best-studied biological networks \cite{Zhang:09}. From this graph,
which has $P$ as its vertex set, and the relation $R$ we can obtain a new
graph whose vertex set are the protein domains $D$, with edges connecting
domains that are found in physically interacting proteins. This ``domain
interaction graph'' conveys information e.g.\ on the functional versatility
of protein complexes. On the other hand, we can use $R$ to construct the
domain-cooccurrence networks (DCNs) \cite{Wuchty:05} as simple relational
composition $R\circ R^+$. In examples like these, the detailed connections
between the various graphs have remained unexplored. In fact, there may not
be a meaningful connection between some of them, e.g.\ between PPIs and
DCNs, while in other cases there is a close connection: the domain
interaction graph, for example, is determined by the PPI and $R$.

A second setting in which graph structures are clearly related to each
other is coarse-graining. Here, sets of vertices are connected to a single
coarse-grained vertex, with coarse-grained edges inherited from the
original graph. In the simplest case, we deal with quotient graphs
\cite{Xiao:08}, although other, less stringent constructions are
conceivable. Similarly, we would expect that networks that are related by
some evolutionary process retain some sort of structural relationship.

\subsection{Main Definitions}

A well-defined mathematical problem is hidden in this setting: Given two
networks, can we identify whether they are related in meaningful ways?  The
usual mathematical approach to this question, namely to ask for the
existence of structure-preserving \emph{maps}, appears to be much too
restrictive. Instead, we set out here to ask if there is a \emph{relation}
between the two networks that preserve structures in a less restrained
sense.

The idea is to transfer edges from a graph $G$ to a graph $H$ with the help
of a relation $R$ between the vertex set $V$ of $G$ and the vertex set $B$
of $H$. In this context, $R$ is simply a set of pairs $(v,b)$, with $v\in V, b\in
B$. Since graphs can be regarded as representations of binary relations, we
can also consider $G$ as a relation on its vertex set, with $(x,y)\in G$ if
and only if $x$ and $y$ are connected by an edge of $G$. We then have the
composition $G\circ R$ given by all pairs $(x,b)$ for which there exists a
vertex $y\in V$ connected by an edge of $G$ to $x$ and $(y,b) \in R$.
This, however, like $R$ is a relation between elements of different sets.
In order to equip the target set $B$ with a graph structure, we simply
connect elements $u$ and $v$ in $B$ if they stand in relation to connected
elements of $G$. In the following, we give a formal definition, and we
shall then relate it to the composition of relations just described.

A \textit{directed graph} $G$ is a pair $G=(V_G,E_G)$ such that $E_G$ is a
subset of $V_G\times V_G$. We denote by $V_G$ the \textit{set of vertices
  of $G$} and by $E_G$ \textit{the set of edges of $G$}.  We consider only
finite graphs and allow loops on vertices.

An \textit{undirected graph} (or simply a \textit{graph}) $G$ is any
directed graph such that $(u,v)\in E_G$ if and only if $(v,u)\in E_G$. We
thus consider undirected graphs to be special case of directed graphs and
we still allow loops on vertices.  A \textit{simple graph} is an undirected
graph without loops.

\begin{figure}[tbp]
  \begin{center}
    \includegraphics{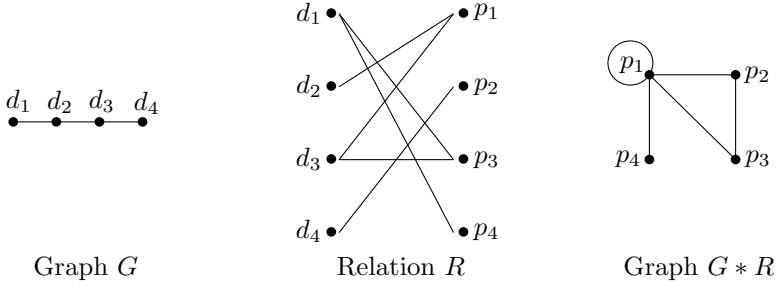}
  \end{center}
  \caption{The graph $G\muls R$ is determined by the graph $G$ and the
    relation $R$.}
	\label{protein}
\end{figure}

\begin{defn}\label{def:muls}
  Let $G=(V_G,E_G)$ be a graph, $B$ a finite set, and $R\subseteq V\times
  B$ a binary relation, where for every element $b\in B$, we can find an
  element $v \in V_G$ such that $(v,b)\in R$. Then the graph $G\muls R$ has
  vertex set $B$ and edge set
\begin{equation}
  E_{G\muls R} = \left\{ (u,v)\in B\times B | \textrm{ there is }
  (x,y)\in E_G \textrm{ and } (x,u),(y,v)\in R \right\}.
\end{equation}
\end{defn}

An example of the $\muls$ operation is depicted in Fig. \ref{protein}.

Graphs with loops are not always a natural model, however, so that
it may appear more appealing to consider the slightly modified definition.
\begin{defn}\label{def:mulw}
Let $G=(V_G,E_G)$ be a simple graph, $B$ a finite set, $R$ a binary relation,
where for every element $b\in B$, we can find an element $v \in V_G$ such that
$(v,b)\in R$.  Then the (simple) graph $G\mulw R$ has vertex set $B$ and edge
set
\begin{equation}
  E_{G\mulw R} = \left\{ (u,v)\in B\times B | u\ne v \textrm{ and there is }
  (x,y)\in E_G \textrm{ and } (x,u),(y,v)\in R \right\}.
\end{equation}
\end{defn}

We shall remark that these definitions remain meaningful for directed
graphs, weighted graphs (where the weight of edge is a sum of weights of
its pre-images) as well as relational structures. For simplicity, we
restrict ourselves to undirected graphs (with loops).  Most of the results
can be directly generalized.

Graphs can be regarded as representations of symmetric binary
relations. Using the same symbol for the graph and the relation it
represents, we may re-interpret definition~\ref{def:muls} as a conjugation
of relations.  $R^+$ is the \textit{transpose} of $R$, i.e., $(u,x)\in R^+$
if and only if $(x,u)\in R$.  The double composition $R^+\circ G\circ R$
contains the pair $(u,v)$ in $B\times B$ if and only if there are $x$ and
$y$ such that $(u,x)\in R^+$, $(y,v)\in R$, and $(x,y)\in E_G$. Thus
\begin{equation}
  G\muls R = R^+\circ G\circ R.
\end{equation}

Simple graphs, analogously, correspond to the irreflexive symmetric
relations.  For any relation $R$, let $R^\iota$ denote its irreflexive
part, also known as the \textit{reflexive reduction} of $R$. Since
definition~\ref{def:mulw} explicitly excludes the diagonals, it can be
written in the form
\begin{equation}
  G\mulw R = (R^+\circ G\circ R)^{\iota}.
\end{equation}
We have $G\mulw R = (G\muls R)^{\iota}$, and hence $E_{G\mulw R}\subseteq
E_{G\muls R}$.  The composition $G\muls R$ is of particular interest when
$G$ is also a simple graph, i.e., $G=G^{\iota}$.

The main part of this contribution will be concerned with the solutions of
the equation $G\muls R=H$. The weak version, $G\mulw R=H$, will turn out
to have much less convenient properties, and will be discussed only briefly
in section~\ref{sect:weak}.

Throughout this paper we use the following standard notations and terms.

For relation $R\subseteq X\times Y$ we define by 
$R(x)=\{p\in Y|(x,p)\in R\}$ the \emph{image of $x$ under $R$} and 
$R^{-1}(p)=\{x\in X|(x,p)\in R\}$ the 
\emph{pre-image of $p$ under $R$}.

The \emph{domain}
of $R$ is defined by $\domain{R}=\{x\in
X|\exists p\in Y\text{ s.t. }(x,p)\in R\}$, and the \emph{image} of
$R$ is defined by $\image{R}=\{p\in Y|\exists x\in X\text{
  s.t. }(x,p)\in R\}$.  We say that the \emph{domain of $R$ is full} if for
any $x\in X$ we have $R(x)\ne\emptyset$. Analogously, the \emph{image is
full} if for any $p\in Y$ we have $R^{-1}(p)\ne\emptyset$.

Let $R\subseteq X\times Y$ is a binary relation, then $R$ is
\textit{injective}, if for all $x$ and $z$ in $X$ and $y$ in 
$Y$ it holds that if $(x,y)\in R$ and $(z,y)\in R$ then $x = z$.
$R$ is \textit{functional}, if for all $x$ in $X$, and $y$ and 
$z$ in $Y$ it holds that if $(x,y)\in R$ and $(x,z)\in R$ then 
$y = z$.

We denote by $I_G$ the \textit{identity map on $G$}, i.e., 
$\{(x,x)|x\in V_G\}$.

Let $G=(V_G,E_G)$ be a graph and let $W \subseteq V_G$. 
The \textit{induced subgraph}
$G[W]$ has vertex set $W$ and $(x,y)$ is an edge of $G[W]$ if $x,y\in W$ and
$(x,y)\in E_G$. 

A graph $P_k$ is a path of length $k$. Similarly, $C_k$ is an (elementary)
cycle of length $k$ with vertex set $\{0,1,\dots,k-1\}$. Finally, $K_k$ is
the complete (loopless) graph with $k$ vertices.

An \textit{isolated vertex} is a vertex with degree 0. Note that the vertex
with a loop is not isolated in this sense.

\subsection{Matrix Multiplication}
The operation $\muls$ can also be formulated in terms of matrix
multiplication.  To see this, consider the following variant of the
operation on weighted graphs.
\begin{defn}\label{mult_w}
If $G$ is a weighted graph, we use $w(x,y)$ to denote the weight between
$x$ and $y$. Given a finite set $B$ and a binary relation $R\subseteq
V_G\times B$, $G\mulm R$ is defined as a weighted graph $H$ with vertex set
$B$, for any $u,v\in B$, $w(u,v)=\sum_{(x,u)\in R,(y,v)\in R} w(x,y)$.
\end{defn}
Ignoring the weights, operations $\muls$ and $\mulm$ are equivalent.

Using the language of matrices, $G\mulm
 R=H$ can be interpreted as
matrix multiplication:
\begin{equation}
  \textbf{W}_{G\mulm R} = \mathbf{R}^+ \mathbf{W}_G \mathbf{R}
\end{equation}
where $\mathbf{R}$ is the matrix representation of the relation $R$,
i.e., $\mathbf{R}_{xu}=1$ if and only if $(x,u)\in R$, otherwise
$\mathbf{R}_{xu}=0$, $\mathbf{R}^+$ denotes the transpose of
$\mathbf{R}$, and $\mathbf{W}_G$ is the matrix of edge weights of
$G$.

\subsection{Graph Homomorphisms and Multihomomorphisms}

The notion of relations between graphs is in many ways similar (but not
equivalent) to the well studied notion of graph homomorphisms. The majority
of our results focus on similarities and differences between those two
concepts.  We give here only the basic definitions of graph
homomorphisms. For more details see \cite{Hell:04}.

A \emph{homomorphism} from a graph $G$ to a graph $H$ is a mapping
$f:V_G\to V_H$ such that for every edge $(x,y)$ of $G$, $(f(x), f(y))$ is
an edge of $H$. Note that homomorphisms require loops in $H$
whenever $(x,y)\in E_G$ and $f(x)=f(y)$. In contrast, $f$ is a \emph{weak
  homomorphism} if $(x,y)\in E_G$ implies that either $f(x)=f(y)$ or
$(f(x),f(y))\in E_H$. Every homomorphism from $G$ to $H$ induces
also a weak homomorphism, but not conversely \cite{Imrich:00a}.

Since every homomorphism preserves adjacency, it naturally defines a
mapping $f^{1}:E_G\to E_H$ by setting $f^{1}((x,y))=(f(x),f(y))$ for all
$(x,y)\in E_G$. If $f$ is surjective, we call $f$ a \textit{vertex
  surjective homomorphism}, and if $f^1$ is surjective, we call $f$ an
\textit{edge surjective homomorphism}.  $f$ is \textit{surjective
  homomorphism} if it is both vertex- and edge-surjective \cite{Hell:04}.

A map $f:V_G\to V_H$ is, of course, a special case of a relation. This is
seen by setting $F=\{(x,f(x))|x\in V_G\}$. Hence, there is a surjective
homomorphism from $G$ to $H$ if and only if there is a
functional relation $F$ such that $G\muls F=H$. Another important connection to
the graph homomorphisms is the following simple Lemma.
\begin{lemma} \label{reho} 
  If $G\muls R = H$, and the domain of $R$ is full, then there is a
  homomorphism $f$ from $G$ to $H$ contained in $R$.
\end{lemma}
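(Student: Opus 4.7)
The plan is straightforward: since the domain of $R$ is full, every vertex of $G$ has at least one $R$-partner in $V_H = B$, so I can define $f$ by choosing such a partner for each vertex and then verify that the resulting map respects edges.

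More explicitly, I would proceed as follows. First, using that $R(x)\ne\emptyset$ for every $x\in V_G$, I select, for each $x\in V_G$, some element $f(x)\in R(x)\subseteq B = V_H$. (Finite choice suffices here.) This defines a map $f\colon V_G\to V_H$, and viewing $f$ as the relation $\{(x,f(x))\mid x\in V_G\}$, we have $f\subseteq R$ by construction.

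Next I would show that $f$ is a graph homomorphism from $G$ to $H$. Let $(x,y)\in E_G$ be arbitrary. By the definition of $f$ we have $(x,f(x))\in R$ and $(y,f(y))\in R$. Plugging this into Definition~\ref{def:muls}, the triple $(x,y)\in E_G$ together with $(x,f(x)),(y,f(y))\in R$ witnesses that $(f(x),f(y))\in E_{G\muls R}$. Since $G\muls R = H$, this gives $(f(x),f(y))\in E_H$, which is exactly what we need for $f$ to be a homomorphism.

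There is essentially no obstacle: the statement is really just a reformulation of Definition~\ref{def:muls} once one makes a choice function. The only thing worth being slightly careful about is that the same Definition~\ref{def:muls} does \emph{not} require $R$ to be functional, so $f$ need not be unique — any choice function $f\subseteq R$ with $\domain f = V_G$ works, and the fullness hypothesis is exactly what guarantees that such a choice function exists.
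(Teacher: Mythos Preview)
Your proof is correct and follows essentially the same approach as the paper: both take a functional subrelation $f\subseteq R$ (which exists because the domain is full) and observe that $G\muls f\subseteq G\muls R = H$, i.e., that $f$ preserves edges. Your version simply spells out the verification in more detail.
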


\begin{proof}
  If $G\muls R = H$, then take any functional relation $f\subseteq R$, we
  have $G\muls f \subseteq H$, where $f$ is a homomorphism from $G$ to $H$.
\end{proof}

Analogously, there is a \textit{weak surjective homomorphism} from $G$ to
$H$ if and only if there is a functional relation $F$ such that $G\mulw F=H$,
and there is a weak homomorphism from $G$ to $H$ if there is a functional
relation $F\subseteq V_G\times V_H$ such that $G\mulw F$ is a subgraph of
$H$. The existence of relations between graphs thus can be seen as a proper
generalization of graph homomorphisms or weak  graph homomorphisms, 
respectively.

Finally, a \textit{full homomorphism} from a graph $G$ to a graph $H$ is a
vertex mapping $f$ such that for distinct vertices $u$ and $v$ of $G$, we
have $(u,v)$ an edge of $G$ if and only if $(f(u),f(v))$ is an edge of $H$,
see \cite{Feder:08}.

Relation between graphs can be regarded also as a variant of
multihomomorphisms.  Multihomomorphisms are building blocks of
Hom-complexes, introduced by Lov{\'a}sz, and are related to recent exciting
developments in topological combinatorics \cite{Matousek:03}, in particular
to deep results involved in proof of the Lov{\'a}sz hypothesis \cite{BK:07}.

A \textit{multihomomorphism} $G\rightarrow H$ is a mapping $f:
V_G\rightarrow 2^{V_H} \setminus \{ \emptyset \}$ (i.e., associating a
nonempty subset of vertices of $H$ with every vertex of $G$) such that
whenever $\{ u_1,u_2\}$ is an edge of $G$, we have $(v_1,v_2)\in E_H$ for
every $v_1\in f(u_1)$ and every $v_2\in f(u_2)$.

The functions from vertices to sets can be seen as representation of
relations.  A relation with full domain thus can be regarded as
\textit{surjective multihomomorphism}, a multihomomorphism such that
pre-image of every vertex in $H$ is non-empty and for every edge $(u,v)$ in
$H$ we can find an edge $(x,y)$ in $G$ satisfying $u\in f(x)$, $v\in f(y)$.

\subsection{Examples} \label{sect:exam}

Similarly to graph homomorphisms, the equation $G\muls R=H$ (or $G\mulw R=H$
respectively) may have multiple solutions for some pairs of graphs $(G,H)$,
while there may be no solution at all for other pairs.

As an example, consider $K_2$ (two vertices $x,y$ connected by an edge) and
$C_3$ (a cycle of three vertices $u,v,w$).  Denote $R_1=\{(u,x),(v,y)\}$,
$R_2=\{(v,x),(w,y)\}$, $R_3=\{(w,x),(u,y)\}$, then it is easily seen that
$C_3\muls R_i=K_2$ for each $1\leq i\leq 3$, i.e. the equation $C_3\muls
R=K_2$ has more than one solution.

On the other hand, there is no relation $R$ such that $K_2\muls R=C_3$.
Otherwise, each vertex of $C_3$ is related to at most one vertex of $K_2$,
since $C_3$ is loop free; hence there exists a vertex in $K_2$ which has no
relation to at least two vertices in $C_3$, w.l.o.g., one can assume
$(x,u), (x,v)\notin R$; then the definition of $\muls$ implies that there
is no edge between $u$ and $v$, which causes a contradiction.

Because relations do not need have full domain (unlike graph
homomorphisms), there is always an relation from a graph $G$ to its induced
subgraph $G[W]$.

Relations with full domain are not restricted to surjective
homomorphisms. As a simple example, consider paths $P_1$ with vertex set
$\{x,y\}$ and $P_2$ with vertex set $\{u,v,w\}$, respectively, and set
$R=\{(x,u),(x,w),(y,v)\}$. One can easily verify $P_1\muls R=P_2$ by direct
computation. Here, $R$ is not functional since $x$ has two images.

\subsection{Outline and Main Results}

This paper is organized as follows.

In section~\ref{sect:basic} the basic properties of the strong relations
between graphs are compiled.  It is shown that relations compose and every
relation can be decomposed in a standard way into a surjective and an
injective relation (Corollary \ref{DeCo}). We discuss some structural
properties of graph preserved by the relations.

Equivalence on the class of graphs induced by the existence of relations
between graphs is the topic of section~\ref{sect:retract}.  We consider two
forms: the strong relational equivalence, where relations are required to
be reversible, and weak relational equivalence.  Equivalence classes of
strong relational equivalence are characterized in Theorem~\ref{thm:equi}.
To describe equivalence classes of the weak relational equivalence we
introduce the notion of an R-core of a graph and show that it is in many
ways similar to the more familiar construction of the graph core
(Theorem~\ref{thm:Rcore}). We explore in particular the differences between
core and R-core and provide an effective algorithm to compute the R-core of
given graph is provided.

Section~\ref{sect:poset} is concerned with the partial order induced on
relations between two fixed graphs $G$ and $H$.  Focusing on the special
case $G=H$ the minimal elements of this partial order are described.  In
Theorem \ref{thm:auto1} we give a, perhaps surprisingly simple,
characterization of those graphs $G$ for which all relations of $G$ to
itself are automorphisms.

R-retraction is defined in section~\ref{sect:retraction} in analogy to
retractions.  It naturally gives rise to a notion of R-reduced graphs that
we show to coincide with the concept of graph cores. By reversing the
direction of relations, however, we obtain the concept of a cocore of a
graph, which does not have a non-trivial counterpart in the world of
ordinary graph homomorphisms, and explore its properties.

The computational complexity of testing for the existence of a relation
between two graphs is briefly discussed in section~\ref{sect:complexity}.
In Theorem~\ref{thm:complex} we describe the reduction of this problem to
the surjective homomorphism problem.

Finally, in section~\ref{sect:weak} we briefly summarize the most important
similarities and differences between weak and strong relational
composition.

\section{Basic Properties}
\label{sect:basic}

\subsection{Composition}

Recall that the composition of binary relations is associative, i.e.,
suppose $R\subseteq W\times X$, $S\subseteq X\times Y$, and $T\subseteq
Y\times Z$.  Then $R\circ(S\circ T)=(R\circ S)\circ T$. Furthermore, the
transposition of relations satisfy $(R\circ S)^+ = S^+ \circ
R^+$. Interpreting the graph $G$ as a relation on its vertex set, we easily
derive the following identities:

\begin{lemma}[Composition law]\label{lem:compo}
  $(G\muls R)\muls S = G\muls(R\circ S)$.
\end{lemma}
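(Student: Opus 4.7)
The plan is to reduce both sides to purely relational expressions using the identity $G\muls R = R^+\circ G\circ R$ noted in the text immediately before the lemma, and then to appeal to the associativity of relation composition together with the transpose-reversal rule $(R\circ S)^+ = S^+\circ R^+$. Once everything is flattened into a single string of relational compositions, the two sides become literally the same expression.

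First I would check that both compositions appearing in the lemma are well-defined in the sense of Definition~\ref{def:muls}; that is, that if $R\subseteq V_G\times B$ has full image on $B$ and $S\subseteq B\times C$ has full image on $C$, then $R\circ S\subseteq V_G\times C$ has full image on $C$. This is immediate: given $c\in C$, pick $b\in B$ with $(b,c)\in S$ and then $v\in V_G$ with $(v,b)\in R$, whence $(v,c)\in R\circ S$. In particular $G\muls(R\circ S)$ makes sense, and $(G\muls R)\muls S$ makes sense because $G\muls R$ is a graph on vertex set $B$ and $S\subseteq B\times C$ has full image on $C$.

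Next I would compute both sides. For the left-hand side,
\begin{equation*}
(G\muls R)\muls S \;=\; S^+\circ (G\muls R)\circ S \;=\; S^+\circ(R^+\circ G\circ R)\circ S.
\end{equation*}
For the right-hand side, using $(R\circ S)^+ = S^+\circ R^+$,
\begin{equation*}
G\muls(R\circ S) \;=\; (R\circ S)^+\circ G\circ(R\circ S) \;=\; (S^+\circ R^+)\circ G\circ(R\circ S).
\end{equation*}
Associativity of relation composition then collapses both to $S^+\circ R^+\circ G\circ R\circ S$, giving the claimed equality.

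There is no real obstacle here: the lemma is essentially a syntactic consequence of the facts already collected about $\circ$ and $(-)^+$, and the only point that deserves explicit mention is the well-definedness check for $R\circ S$ above. One could alternatively prove the lemma directly from Definition~\ref{def:muls} by unfolding the quantifier ``there exist witnesses $x,y$'' on each side and re-bracketing the existentials, but the relational-algebra route is cleaner and reuses the machinery already in place.
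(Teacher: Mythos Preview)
Your argument is correct and is essentially identical to the paper's own proof: both expand $(G\muls R)\muls S$ and $G\muls(R\circ S)$ via the identity $G\muls R = R^+\circ G\circ R$, invoke $(R\circ S)^+ = S^+\circ R^+$, and appeal to associativity of relational composition. Your additional well-definedness check for $R\circ S$ is a small extra care the paper omits, but otherwise the two proofs coincide.
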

\begin{proof}
$(G\muls R)\muls S = S^+\circ(R^+\circ G\circ R)\circ S =
    (S^+\circ R^+)\circ G\circ (R\circ S)\\
    =(R\circ S)^+ \circ G\circ (R\circ S) = G\muls(R\circ S).$
\end{proof}

Now we show that every relation $R$ can be decomposed, in a standard way,
to a relation $R_D$ duplicating vertices and a relation $R_C$ contracting
vertices.

\begin{lemma} \label{dec} 
  Let $R\subseteq X\times Y$ be a relation. Then there exists a subset $A$ of
  $X$, a set $B$, an
  injective relation with full domain $R_D\subseteq A\times B$ and a
  functional relation $R_C\subseteq B\times Y$, such that $R=I_A\circ
  R_D\circ R_C$, where $I_A$ is the identity on $X$ restricted to $A$. 
\end{lemma}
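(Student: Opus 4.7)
The plan is to factor $R$ through the set of pairs that it itself consists of. Concretely, I would take $A := \domain R \subseteq X$, let
\[
B := R = \{(x,y) \in X\times Y : (x,y)\in R\},
\]
and define the two auxiliary relations
\[
R_D := \{(x,(x,y)) : (x,y) \in R\} \subseteq A\times B,\qquad
R_C := \{((x,y), y) : (x,y) \in R\} \subseteq B\times Y.
\]
Here $R_D$ sends each $x \in A$ to the list of pairs of $R$ that start at $x$ (``duplicating'' $x$ once for every edge of $R$ out of $x$), while $R_C$ is the second-coordinate projection.

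The next step is to check the three property requirements, each of which is immediate from the definition. For $R_D$: if $(x,(u,v))$ and $(z,(u,v))$ both lie in $R_D$, then the first coordinate of the pair $(u,v)\in B$ forces $x=u=z$, so $R_D$ is injective; and for every $x\in A = \domain R$ there is some $y$ with $(x,y)\in R$, giving $(x,(x,y))\in R_D$, so $R_D$ has full domain on $A$. For $R_C$: each pair $(x,y)\in B$ has the unique image $y$, so $R_C$ is functional.

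It remains to verify the factorization $R = I_A\circ R_D \circ R_C$. Since $\domain R_D \subseteq A$, composing with $I_A$ on the left is the identity, so $I_A\circ R_D = R_D$ and we only need $R = R_D \circ R_C$. For $\supseteq$: if $(x,y)\in R$, take $b=(x,y)\in B$; then $(x,b)\in R_D$ and $(b,y)\in R_C$, hence $(x,y)\in R_D\circ R_C$. For $\subseteq$: if $(x,y)\in R_D\circ R_C$, there exists $b\in B$ with $(x,b)\in R_D$ and $(b,y)\in R_C$; the first forces $b=(x,y')$ for some $y'$ with $(x,y')\in R$, and the second forces the second coordinate of $b$ to equal $y$, so $y=y'$ and $(x,y)\in R$.

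There is no real obstacle here; the only design choice is picking the intermediate set $B$, and once we let $B$ be a copy of $R$ itself (so that every element of $B$ records both ``which $x$'' and ``which $y$'' it witnesses) the injectivity of $R_D$ and functionality of $R_C$ fall out for free. The proof is essentially a relational analogue of the canonical epi--mono factorization of a function through its graph.
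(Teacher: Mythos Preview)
Your proof is correct and follows essentially the same construction as the paper: set $A=\domain R$, $B=R$, let $R_D$ pair each $x$ with the elements of $R$ having first coordinate $x$, and let $R_C$ project to the second coordinate. Your write-up is in fact more explicit than the paper's in verifying injectivity, full domain, functionality, and both inclusions of the factorization.
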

\begin{proof}
  Put $A=\domain{R}$.
  Then the relation $I_A$ removes
  vertices in $X\setminus\domain{R}$. It remains
  to show, therefore, that any relation $R\subseteq X\times Y$ with full
  domain can be decomposed into an injective relation
  $R_D\subseteq X\times B$ with full domain and a
  functional relation $R_C\subseteq B\times
  Y$. To see this, set $B=R$ and declare $(x,\alpha)\in R_D$ if and only if
  $\alpha=(x,p)\in R$ for some $p\in Y$, and $(\beta,q)\in R_C$ if and only if
  $\beta=(y,q)\in R$ for some $y\in X$. By construction $R_D$ is injective
  and $R_C$ is functional. Furthermore, $(x_0,p_0)\in R_D\circ R_C$ if and
  only if there is $\alpha\in R$ that is simultaneously of the form
  $(x_0,p)$ and $(x,p_0)$, i.e., $x=x_0$ and $p=p_0$. Hence $(x_0,p_0)\in
  R$.
\end{proof}

Note that this decomposition is not unique. For instance, we could
construct $B$ from multiple copies of $R$. More precisely, let $B=R\times
\{1,2,\cdots,k\}$, then we would set $\big(x,(\alpha,i)\big)\in R_D$
($1\leq i\leq k$) if and only if $\alpha=(x,p)\in R$ for some $p\in Y$,
etc.

The set $B$ as constructed in the proof of Lemma~\ref{dec} has
minimal size. To see this, it suffices to show that, given $B$ there
is a mapping from $B$ onto $R$. Since $R_D$ is injective and $R_C$ is
functional we may set
$$\alpha\in B\mapsto (R_D^{-1}(\alpha),R_C(\alpha)).$$
Since $R=I_A\circ R_D\circ R_C$ we conclude that the mapping is
surjective, and hence $|B|\geq |R|$.

According to Lemma \ref{lem:compo}, the decomposition of $R$ in the
Lemma \ref{dec} can be restated as follows:
\begin{cor} \label{DeCo}
Suppose $G\muls R=H$. Then there is a set $B$, an injective relation
$R_D\subseteq \domain R\times B$ with full domain, and a surjective relation
$R_C\subseteq B\times \image R$ such that
$G[\domain R]\muls R_D\muls R_C=H$.
\end{cor}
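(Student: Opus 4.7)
The plan is essentially to combine Lemma~\ref{dec} with the composition law of Lemma~\ref{lem:compo}, so most of the work has already been done. I first apply Lemma~\ref{dec} to the relation $R$ with $A=\domain R$, obtaining a set $B$, an injective relation $R_D\subseteq A\times B$ with full domain, and a functional relation $R_C\subseteq B\times Y$ (where $Y=V_H$), such that $R=I_A\circ R_D\circ R_C$.

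Next I use associativity of relational composition and Lemma~\ref{lem:compo} to rewrite
\begin{equation*}
G\muls R \;=\; G\muls\bigl(I_A\circ R_D\circ R_C\bigr) \;=\; \bigl((G\muls I_A)\muls R_D\bigr)\muls R_C.
\end{equation*}
It then remains to identify $G\muls I_A$ with $G[\domain R]$. Unfolding Definition~\ref{def:muls} with $R=I_A$: an edge $(u,v)$ of $G\muls I_A$ exists iff there is $(x,y)\in E_G$ with $(x,u),(y,v)\in I_A$, i.e.\ $x=u$ and $y=v$, which holds precisely when $u,v\in A$ and $(u,v)\in E_G$. This is the defining condition of the induced subgraph $G[A]=G[\domain R]$, so $G\muls I_A=G[\domain R]$.

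The only remaining point is to verify that $R_C$, which Lemma~\ref{dec} only guarantees to be functional, is in fact surjective onto $\image R$. This is immediate from the explicit construction in Lemma~\ref{dec}: for every $q\in\image R$ there exists $y$ with $(y,q)\in R$, so $\beta=(y,q)\in B=R$ satisfies $(\beta,q)\in R_C$. Restricting the codomain to $\image R$ (which loses nothing, since every output of $R_C$ already lies there) yields the desired $R_C\subseteq B\times\image R$ with full image.

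I do not expect any real obstacle here; the statement is essentially a bookkeeping corollary. The only mildly delicate point is the identification $G\muls I_A=G[\domain R]$, and that is routine once the definition is unfolded.
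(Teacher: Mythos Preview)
Your proposal is correct and follows essentially the same route as the paper: apply Lemma~\ref{dec} to factor $R=I_A\circ R_D\circ R_C$, then invoke the composition law (Lemma~\ref{lem:compo}) to rewrite $G\muls R$ accordingly. You simply spell out two details the paper leaves implicit---the identification $G\muls I_A=G[\domain R]$ and the surjectivity of $R_C$ onto $\image R$ (which equals $V_H$ by the full-image requirement in Definition~\ref{def:muls})---and both verifications are correct.
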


In diagram form, this is expressed as
\begin{equation}
\xymatrix{
G \ar[dr]_{R_D}  \ar@{->}[rr]^{R=R_D\circ R_C} & & H \\
  & G\muls R_D \ar[ur]_{R_C} &
 }
\end{equation}

We shall remark that from the fact the relations compose it follows that
the existence of a relation implies a quasi-order on graphs that is related
to the homomorphism order. This order is studied more deeply in
\cite{Hubicka:12}.

\subsection{Structural Properties Preserved by Relations}

In this subsection we investigate structural properties of $H$ that can be
derived from knowledge about certain properties of $G$ and the fact that
there is some relation $R$ such that $G\muls R=H$.

\subsubsection{Connected Components}

\begin{pro}\label{ccs}
  Let $G\muls R=H$ and denote by $H_1,\cdots,H_k$ the connected components
  of $H$. Then there are relations $R_i\subseteq V_G\times V_{H_i}$ for
  each $1\leq i\leq k$ such that $G\muls R_i=H_i$ and $R=\bigcup_{i=1}^k
  R_i$. Furthermore, set $G_i=G[R^{-1}(V_{H_i})]$. Then there are no edges
  between $G_i$ and $G_j$ for arbitrary $i\neq j$.
\end{pro}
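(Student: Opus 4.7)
The plan is to define $R_i$ as the natural restriction of $R$ to pairs whose second coordinate lies in the $i$-th component, and then verify each claim by unfolding Definition~\ref{def:muls}.

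First I would set $R_i := R \cap (V_G \times V_{H_i})$. Since the vertex sets $V_{H_1},\dots,V_{H_k}$ partition $V_H$, the identity $R = \bigcup_{i=1}^k R_i$ is immediate. Before applying $\muls$, I would check that $R_i$ satisfies the full-image requirement of Definition~\ref{def:muls} with target set $V_{H_i}$: since $G\muls R=H$ the image of $R$ equals $V_H$, so for any $b \in V_{H_i}$ there exists $x\in V_G$ with $(x,b)\in R$, and this pair automatically lies in $R_i$. Hence $G\muls R_i$ is a well-defined graph on vertex set $V_{H_i}$.

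Next I would verify $G\muls R_i = H_i$ by double edge-inclusion. For the forward direction, any edge $(u,v)$ of $G\muls R_i$ comes from some $(x,y)\in E_G$ with $(x,u),(y,v)\in R_i\subseteq R$, hence $(u,v)\in E_H$; as $u,v\in V_{H_i}$, $(u,v)$ is an edge of the induced subgraph $H_i$. Conversely, any edge $(u,v)$ of $H_i$ has $u,v\in V_{H_i}$ and is an edge of $H = G\muls R$, so there exist $(x,y)\in E_G$ with $(x,u),(y,v)\in R$; since $u,v\in V_{H_i}$, these witnessing pairs already lie in $R_i$, giving $(u,v)\in E_{G\muls R_i}$.

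For the final claim I would argue by contradiction. Suppose there is an edge $(x,y)\in E_G$ with $x\in V_{G_i}$ and $y\in V_{G_j}$ for some $i\neq j$. By the definitions of $G_i$ and $G_j$, there exist $u\in V_{H_i}$ and $v\in V_{H_j}$ with $(x,u),(y,v)\in R$. The defining condition of $\muls$ then forces $(u,v)\in E_H$, but such an edge would join vertices from two distinct connected components of $H$, a contradiction.

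There is essentially no technical obstacle here; the statement is a direct bookkeeping consequence of Definition~\ref{def:muls}. The only point that needs a moment of care is verifying that $G\muls R_i$ is well-defined on $V_{H_i}$, i.e.\ that $R_i$ has full image, which follows from the full image of $R$ on $V_H$.
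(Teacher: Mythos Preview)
Your proof is correct and follows essentially the same approach as the paper: both define $R_i$ as the restriction $R\cap(V_G\times V_{H_i})$ and use the fact that any edge of $G\muls R$ landing in $V_{H_i}\times V_{H_j}$ with $i\neq j$ would contradict $H_i,H_j$ being separate components. The only cosmetic difference is that the paper phrases the argument via the relational identity $G\muls R=\bigcup_{i,j}R_i^+\circ G\circ R_j$ and observes that the cross terms vanish, whereas you verify $G\muls R_i=H_i$ by direct double inclusion from Definition~\ref{def:muls}; your version is slightly more explicit about the full-image requirement, which the paper leaves implicit.
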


\begin{proof}
  Define the restriction of $R$ to the connected components of $H$ as
  $R_i=\{(x,y)\in R| y\in V_{H_i}\}$. Clearly, $R$ is the disjoint union of
  the $R_i$ and $G\muls R_i\subseteq H_i$. The definition of $\muls$
  implies $H=G\muls R= \left(\bigcup_i R_i\right)^+ \circ G\circ
  \left(\bigcup_j R_j\right) = \bigcup_i\bigcup_j R_i^+ \circ G\circ R_j$.
  Since $R_i$ and $R_j$ relate vertices of $G$ to different connected
  components of $H$, we have $R_i^+ \circ G\circ R_j=\emptyset$. It follows
  that $H=\bigcup_i\bigcup_j R_i^+\circ G\circ R_j = \bigcup_i R_i^+\circ
  G\circ R_i = \bigcup_i G\muls R_i$. Hence $G\muls R_i=H_i$.

  Any edge between $G_i$ and $G_j$ would generate edges between $H_i$ and
  $H_j$, thus causing a contradiction to our assumptions.
\end{proof}

Denote by $b_0(G)$ the number of connected components of $G$, then from
Proposition \ref{ccs} we arrive at:
\begin{cor}\label{cor:concomp}
  Suppose both $G$ and $H$ do not have isolated vertices. If $G\muls R=H$
  and $R$ has full domain, then $b_0(G)\geq b_0(H)$.
\end{cor}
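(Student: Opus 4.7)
The plan is to apply Proposition~\ref{ccs} and show that the sets $V_{G_i}=R^{-1}(V_{H_i})$ partition $V_G$ into $b_0(H)$ nonempty pieces, each of which is a union of connected components of $G$.

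First I invoke Proposition~\ref{ccs} to write $R=\bigcup_i R_i$ with $G\muls R_i=H_i$ and to conclude there are no edges of $G$ between $V_{G_i}$ and $V_{G_j}$ for $i\ne j$. Each $V_{G_i}$ is nonempty because Definition~\ref{def:muls} requires the image of $R$ to be full, so every vertex of $V_{H_i}$ has at least one preimage in $V_{G_i}$.

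The main obstacle is to establish that the $V_{G_i}$ are pairwise disjoint: a priori, a single vertex of $G$ might be related by $R$ to vertices in different connected components of $H$. To rule this out I will use the hypotheses that $G$ has no isolated vertices together with the full domain of $R$. Suppose $x\in V_{G_i}\cap V_{G_j}$ with $i\ne j$, and pick $u\in R(x)\cap V_{H_i}$ and $v\in R(x)\cap V_{H_j}$. Since $x$ is not isolated in $G$, it has a neighbor $y$, and since $R$ has full domain there is some $z\in R(y)$. Then the edge $(x,y)\in E_G$ together with the definition of $\muls$ forces both $(u,z)\in E_H$ and $(v,z)\in E_H$, placing $u$ and $v$ in the same connected component of $H$, a contradiction.

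Once disjointness is established, the full domain of $R$ gives $V_G=\bigsqcup_i V_{G_i}$, and the absence of edges across different $V_{G_i}$ forces each $V_{G_i}$ to consist of entire connected components of $G$. Since there are $b_0(H)$ such nonempty pieces, $G$ has at least $b_0(H)$ connected components, which is the desired inequality.
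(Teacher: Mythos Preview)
Your proof is correct and follows essentially the same route as the paper's: both invoke Proposition~\ref{ccs} and then use the absence of isolated vertices in $G$ together with the full domain of $R$ to conclude that each connected component of $G$ lies in a unique $G_i$. The paper phrases this directly (a component meeting two distinct $G_i$'s would contain an edge between them, contradicting Proposition~\ref{ccs}), whereas you first establish pairwise disjointness of the $V_{G_i}$ and then partition $V_G$; the underlying argument is the same.
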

\begin{proof}
  Our notations is the same as in Proposition \ref{ccs}. We claim for arbitrary
  connected component $C$ of graph $G$, there exists a unique $i$, such that
  $C$ is a connected component of $G_i$. Otherwise one can find two
  vertices $x,y\in C$, $x$ and $y$ adjacent,
  such that $x\in V_{G_i}$ and $y\in V_{G_j}$, since $G$ has no isolated 
  vertices, which contradicts 
  $E(G_i,G_j)=\emptyset$. Thus it follows $b_0(G)\geq b_0(H)$ is easily 
  followed.
\end{proof}

From corollary \ref{cor:concomp}, we know that $H$ is connected whenever $G$
is connected. The connectedness of $G$, however, cannot be deduced from the
connectedness of $H$. For example, consider $G=P_1\cup P_1$ with vertex set
$\{x_1,x_2,x_3,x_4\}$ and edges $\{x_1,x_2\}$ and $\{x_3,x_4\}$, and
$H=P_2$ with vertex set $\{v_1,v_2,v_3\}$. Set
$R=\{(x_1,v_1),(x_2,v_2),(x_3,v_2),(x_4,v_3)\}$. One can easily verify that
$G\muls R=H$. On the other hand, $H$ is connected but $G$ has 2 connected
components. The point here is, of course, that $R$ is not injective.

\subsubsection{Colorings}

Graph homomorphisms of simple graphs can be seen as generalizations of
colorings: A \textit{(vertex) $k$-coloring} of $G$ is a mapping
$c:V_G\rightarrow\{1,2,\dots,k\}$ such that adjacent vertices have
distinct colors, i.e., $c(u)\neq c(v)$ whenever $(u,v)\in E_G$. 
Every $k$-coloring $c$ can be also seen as a homomorphism $c:G\to K_k$.

The \textit{chromatic number} $\chi$ is defined as the minimal of colors
needed for a coloring, see e.g.\ \cite{Hell:04}.  Thus, if $R$ is a
functional relation describing a vertex coloring, then $G\muls R\subseteq
K_k$. Conversely, $G\muls R\subseteq K_k$, where $R$ has full domain and
image, then from Lemma \ref{reho}, there exists a homomorphism from $G$ to
$K_k$, which is a coloring of $G$.
\begin{lemma} \label{color} If $G$ is a simple graph and $R$ has full
  domain, then $\chi(G)\leq \chi(G\muls R)$.
\end{lemma}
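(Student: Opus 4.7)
The plan is to reduce the statement to a composition of homomorphisms, invoking Lemma \ref{reho} to obtain the crucial homomorphism from $G$ to $H := G\muls R$ inside the relation $R$.

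First, I would handle the degenerate case: if $H$ contains a loop, then $H$ admits no homomorphism to any $K_k$ (since $K_k$ is loopless), so $\chi(H)=\infty$ and the inequality is trivial. So I may assume $H$ is loopless and $k := \chi(G\muls R)$ is finite, witnessed by a proper $k$-coloring $c\colon V_H\to\{1,\dots,k\}$, equivalently a graph homomorphism $c\colon H\to K_k$.

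Next, since $R$ has full domain, Lemma \ref{reho} supplies a homomorphism $f\colon G\to H$ with $f\subseteq R$. Now consider the composition $c\circ f\colon V_G\to\{1,\dots,k\}$. For any edge $(x,y)\in E_G$ we have $(f(x),f(y))\in E_H$ because $f$ is a homomorphism, and then $(c(f(x)),c(f(y)))\in E_{K_k}$ because $c$ is a homomorphism. Since $K_k$ is loopless, this forces $c(f(x))\neq c(f(y))$, so $c\circ f$ is a proper $k$-coloring of $G$. Hence $\chi(G)\leq k = \chi(G\muls R)$.

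There is really no obstacle here beyond correctly using Lemma \ref{reho}; the only subtle point is that $G$ being simple does not prevent $H=G\muls R$ from acquiring loops (a loop at $u\in B$ arises whenever two distinct adjacent vertices of $G$ both map to $u$ under $R$), which is why the loop case must be disposed of separately before pulling back the coloring.
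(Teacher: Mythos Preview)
Your proof is correct and follows exactly the paper's approach: invoke Lemma~\ref{reho} to obtain a homomorphism $G\to H=G\muls R$, then use that the existence of a homomorphism implies $\chi(G)\leq\chi(H)$. The paper's proof is terser (it omits the explicit composition with the coloring and the loop case), but yours fills in precisely those details without deviating in substance.
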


\begin{proof}
  Suppose $G\muls R=H$ and the domain of $R$ is full, from Lemma \ref{reho}
  we know $G\rightarrow H$, so $\chi(G)\leq \chi(G\muls R)$.  
\end{proof}

\subsubsection{Distances}

\begin{obser} \label{path} If $P_k\muls R=G$, $G$ is a simple graph and the
  domain of $R$ is full, $P_k$ with the vertex set ${0,1,\cdots,k}$, then
  there is a walk $[v_0,v_1,\dots,v_k]$ in $G$, where $(i,v_i)\in R$ for
  $0\leq i\leq k$.
\end{obser}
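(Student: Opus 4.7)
The plan is a direct unpacking of Definition~\ref{def:muls}; no induction or subtle structural argument should be required.

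First, I would use the hypothesis that the domain of $R$ is full to pick, for each $0 \le i \le k$, some $v_i \in R(i)$; this is legitimate because $R(i) \neq \emptyset$ for every vertex $i$ of $P_k$. These choices can be made independently.

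Next, I would verify that consecutive $v_i$'s are adjacent in $G$. Fix $0 \le i \le k-1$. Since $(i,i+1) \in E_{P_k}$ and $(i,v_i),(i+1,v_{i+1}) \in R$, Definition~\ref{def:muls} applied to $G = P_k \muls R$ immediately gives $(v_i,v_{i+1}) \in E_{P_k \muls R} = E_G$.

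The one point that deserves a comment is the distinctness of consecutive walk vertices, and this is exactly where the hypothesis that $G$ is simple (hence loopless) is used: the edge $(v_i,v_{i+1}) \in E_G$ forces $v_i \neq v_{i+1}$, since otherwise $(v_i,v_i)$ would be a loop in $G$. Consequently $[v_0,v_1,\dots,v_k]$ is a bona fide walk in $G$ with $(i,v_i) \in R$ for every $i$. I do not anticipate any real obstacle; the only thing to be careful about is that without simplicity one could in principle have $v_i = v_{i+1}$, yielding a loop rather than a genuine step of a walk, so the assumption that $G$ is simple cannot be dropped if one wants a walk in the classical sense.
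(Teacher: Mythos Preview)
Your argument is correct and is precisely the direct unpacking the paper has in mind; indeed the paper states this as an \emph{Observation} with no proof at all, so your approach coincides with (and in fact spells out) what the authors left implicit. It is also worth noting that your argument is essentially a specialization of Lemma~\ref{reho}: any functional $f\subseteq R$ gives a homomorphism $P_k\to G$, and the image of the path under $f$ is the desired walk.
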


\begin{obser} \label{cycle} If $C_k\muls R=G$, $G$ is a simple graph and
  the domain of $R$ is full, then there is a closed walk
  $[v_0,v_1,\dots,v_{k-1}]$ in $G$, where $(i,v_i)\in R$ for $0\leq i\leq
  k-1$.
\end{obser}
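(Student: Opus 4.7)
The plan is to reduce this directly to Lemma \ref{reho}, in the same spirit as the proof of Observation \ref{path} for paths would go. Since $C_k \muls R = G$ and the domain of $R$ is full, Lemma \ref{reho} guarantees a homomorphism $f \colon C_k \to G$ with $f \subseteq R$. Set $v_i := f(i)$ for each $0 \le i \le k-1$.

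Next I would verify that the sequence $[v_0, v_1, \dots, v_{k-1}]$ is a closed walk in $G$. For each $0 \le i \le k-1$, the pair $(i, i+1 \bmod k)$ is an edge of $C_k$, so by the homomorphism property $(f(i), f(i+1 \bmod k)) = (v_i, v_{i+1 \bmod k})$ is an edge of $G$. In particular, the edge $(v_{k-1}, v_0)$ is present, so the walk closes up as required. Membership in $R$ is immediate: $(i, v_i) = (i, f(i)) \in f \subseteq R$.

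I expect there to be essentially no obstacle here: once Lemma \ref{reho} is invoked, the statement is just a translation of the homomorphism $C_k \to G$ into walk language, and the only subtlety is being careful with the cyclic indexing to see that the walk truly closes. Note that nothing forces $f$, and hence the $v_i$, to be distinct, which is why the conclusion is phrased as a closed walk rather than a cycle in $G$.
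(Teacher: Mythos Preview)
Your proposal is correct. The paper states this as an Observation without any proof, and your argument via Lemma~\ref{reho} is precisely the intended one-line justification: a homomorphism $f\colon C_k\to G$ contained in $R$ is nothing other than a closed walk in $G$ with $(i,f(i))\in R$.
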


Let $d_G(x,y)$ denote the \textit{canonical distance} on graph $G$, i.e.,
$d_G(x,y)$ is the minimal length of a path in graph $G$ that connects
vertices $x$ and $y$; if there is no path connects vertices $x$ and $y$,
then the distance is infinite.

\begin{lemma} \label{lem:dis}
  Suppose there exists a relation $R$ with full domain s.t.\ $G\muls R=H$,
  $x,y\in V_G$, $u,v\in V_H$ and $(x,u)\in R, (y,v)\in R$. If $x\neq y$,
  then $d_H(u,v)\leq d_G(x,y)$; If $x=y$ and $x$ is not an isolated vertex,
  then $d_H(u,v)\leq 2$.
\end{lemma}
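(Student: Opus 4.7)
The plan is to treat the two cases separately and, in each case, explicitly construct a short walk in $H$ connecting $u$ to $v$; since any walk contains a path of no greater length, this bounds $d_H(u,v)$.

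For the case $x\neq y$, I set $k=d_G(x,y)$. If $k=\infty$ the bound is vacuous, so assume $k$ is finite and fix a shortest path $x=x_0,x_1,\ldots,x_k=y$ in $G$. The idea is to lift this path to a walk in $H$ through $R$. Using that $R$ has full domain, for each intermediate index $0<i<k$ pick some $z_i\in V_H$ with $(x_i,z_i)\in R$, and set $z_0=u$, $z_k=v$ (these are available by hypothesis). For each $0\le i<k$ we have $(x_i,x_{i+1})\in E_G$ together with $(x_i,z_i),(x_{i+1},z_{i+1})\in R$, so the defining property of $\muls$ gives $(z_i,z_{i+1})\in E_H$. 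This produces a walk of length $k$ from $u$ to $v$ in $H$, and any such walk contains a $u$--$v$ path of length at most $k$, yielding $d_H(u,v)\le k=d_G(x,y)$.

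For the case $x=y$ with $x$ not isolated, I use the existence of a $G$-neighbor $w$ of $x$ (possibly $w=x$ when $x$ carries a loop). By full domain pick any $z\in V_H$ with $(w,z)\in R$. Applying the definition of $\muls$ twice to the edge $(x,w)\in E_G$: from $(x,u),(w,z)\in R$ we get $(u,z)\in E_H$, and from $(w,z),(x,v)\in R$ we get $(z,v)\in E_H$. Hence $u,z,v$ is a walk of length at most two in $H$, so $d_H(u,v)\le 2$.

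There is no real obstacle here; the only minor care needed is (i) allowing walks rather than paths in the construction, since some of the chosen $z_i$ may coincide or the edges $(z_i,z_{i+1})$ may be loops, which is harmless because a walk from $u$ to $v$ of length $\ell$ always contains a $u$--$v$ path of length $\le\ell$, and (ii) handling the subcase $w=x$ in the second part, which is exactly why we needed the hypothesis that $x$ is not isolated (recall the paper's convention that a looped vertex is not considered isolated).
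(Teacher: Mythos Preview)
Your proof is correct and follows essentially the same approach as the paper's own proof: in both cases one lifts a shortest $G$-path (respectively, a single edge incident to $x$) through $R$ to obtain a walk of the required length in $H$. Your version is in fact slightly more careful, explicitly disposing of the case $d_G(x,y)=\infty$ and the walk-versus-path issue, but the underlying argument is identical.
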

\begin{proof}
  If $x=y$ and $x$ is not isolated, pick a vertex $z$ of graph $G$ which is
  adjacent to vertex $x$, and find a vertex $w\in H$ satisfying $(z,w)\in
  R$. Then $(w,u)\in E_H$ and similarly $(w,v)\in E_H$.  So $d_H(u,v)\leq
  2$.

  If $x\neq y$, choose the shortest path $P=x,x_1,x_2,\cdots, x_k,y$
  between $x$ and $y$, and find corresponding vertices
  $u_1,u_2,\cdots,u_k\in H$ such that$(x_i,u_i)\in R$ for any $1\leq i\leq
  k$ it is easily seen that $(u,u_1)\in E_H$, $(u_i,u_{i+1})\in E_H$ and
  $(u_k,v)\in E_H$, then $d(u,v)\leq d(x,y)$.  
\end{proof}

The \textit{eccentricity} $\epsilon$ of a vertex $v$ is the greatest
distance between $v$ and any other vertex. The \textit{radius} of a graph
$G$, denoted by $rad(G)$, is the minimum eccentricity of any vertex. The
\textit{diameter} of a graph $G$, denoted by $diam(G)$, is the maximum
eccentricity of any vertex in the graph, i.e., the largest distance between
any pair of vertices.

\begin{cor} \label{rad} Suppose $G\muls R=H$, $G$ and $H$ are connected
graphs, and $R$ with full domain, then $rad(H)\leq \max\{rad(G),2\}$.
\end{cor}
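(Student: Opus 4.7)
The plan is to pick a center of $G$ and show that (one of) its $R$-image(s) in $H$ is a near-center of $H$, with eccentricity at most $\max\{rad(G),2\}$. This directly gives $rad(H) \le \max\{rad(G),2\}$.

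Concretely, first choose a vertex $x \in V_G$ realizing the radius, i.e., $d_G(x,y) \le rad(G)$ for every $y \in V_G$. Since $R$ has full domain, there exists $u \in V_H$ with $(x,u) \in R$. I will argue that $\epsilon_H(u) \le \max\{rad(G),2\}$. To that end, fix an arbitrary $v \in V_H$. By Definition~\ref{def:muls}, the relation $R$ required in $G \muls R = H$ has full image, so there is some $y \in V_G$ with $(y,v) \in R$.

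Now I apply Lemma~\ref{lem:dis} to the pair $(x,u),(y,v) \in R$. If $x \ne y$, the lemma gives $d_H(u,v) \le d_G(x,y) \le rad(G)$. If $x = y$ and $u = v$, the distance is $0$. If $x = y$, $u \ne v$, and $x$ is not isolated in $G$, the lemma gives $d_H(u,v) \le 2$. Since $G$ is connected, the only way $x$ can be isolated is the degenerate case where $G$ is a single vertex with no loop; but then $G \muls R$ has no edges, and connectedness of $H$ forces $|V_H|=1$ and $rad(H)=0$, so the bound holds trivially. In every remaining case the eccentricity of $u$ in $H$ is at most $\max\{rad(G),2\}$, as required.

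The main (and really only) subtle point is the loop case where several vertices of $H$ share the same pre-image $x$ in $G$: the bound $d_H(u,v)\le 2$ coming from routing through an $R$-image of a neighbor of $x$ is tight and cannot be improved by anything involving $rad(G)$, which is exactly the reason the $\max$ with $2$ must appear in the statement. All other steps are bookkeeping, plus the small edge case for the single-vertex $G$.
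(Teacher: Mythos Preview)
Your proof is correct and is exactly the argument the paper intends: the corollary is stated without proof immediately after Lemma~\ref{lem:dis}, and your derivation---pick a center $x$ of $G$, take any $u\in R(x)$, and bound $\epsilon_H(u)$ by applying Lemma~\ref{lem:dis} to each $v\in V_H$---is the straightforward reading of that lemma. The handling of the degenerate single-vertex case is fine as well.
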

An analogous results holds for the diameters. In particular, if $G$
is not a complete graph, then $diam(G)\geq diam(G\muls R)$.

\begin{cor}
 There is a relation from the path of length $k$, $P_k$, to the path of 
 length $l$, $P_l$, if and only if either $k\geq l$ or $k=1, l=2$.
\end{cor}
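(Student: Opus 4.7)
The plan splits into an easy construction and a bound in the converse direction.

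Easy direction. When $k\ge l$, label $P_k$ with vertices $0,1,\dots,k$ and $P_l$ with $v_0,\dots,v_l$, and set $R=\{(i,v_i):0\le i\le l\}$. Then $R$ has full image, $P_k\muls R$ generates exactly the edges $(v_i,v_{i+1})$ for $0\le i\le l-1$, and so $P_k\muls R=P_l$. The case $(k,l)=(1,2)$ is the explicit example already given in subsection~\ref{sect:exam}.

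For the converse, suppose $P_k\muls R=P_l$; one must show $l\le k$ unless $(k,l)=(1,2)$. The case $l=0$ is trivial, so assume $l\ge 1$. First reduce to the situation where $R$ has full domain and image: restrict $R$ to $\domain R$ and then delete from the domain every vertex that becomes isolated in $G[\domain R]$. Image fullness survives this trimming because, since $l\ge 1$, every $v\in V_{P_l}$ has a neighbor in $P_l$, and the witnessing edge in $P_k$ must have a non-isolated endpoint mapping to $v$. The resulting graph $G''$ is an induced subgraph of $P_k$ without isolated vertices, hence a disjoint union $P_{m_1}\cup\dots\cup P_{m_t}$ with each $m_i\ge 1$.

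The key step applies Lemma~\ref{lem:dis} componentwise. Let $R_i$ be the restriction of $R$ to $V_{P_{m_i}}$ and set $H_i=P_{m_i}\muls R_i$; then $E_{P_l}=\bigcup_i E_{H_i}$, and each $R_i$ has full domain on its component. For any $u,v\in\image R_i$ witnessed by $(x,u),(y,v)\in R_i$, Lemma~\ref{lem:dis} gives $d_{P_l}(u,v)\le m_i$ if $x\ne y$ and $d_{P_l}(u,v)\le 2$ if $x=y$, so $\image R_i$ has $P_l$-diameter at most $d_i:=\max(m_i,2)$. Because the path metric on $P_l$ is $d(v_a,v_b)=|a-b|$, this forces $\image R_i$ into a sub-interval of $P_l$ of length $\le d_i$; since the endpoints of every edge of $H_i$ lie in $\image R_i$, it follows that $|E_{H_i}|\le d_i$. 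Consequently $l=|E_{P_l}|\le\sum_i|E_{H_i}|\le\sum_i\max(m_i,2)$.

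Finally, the $t$ components of $G''$ sit inside $P_k$ separated by at least $t-1$ removed vertices, so $\sum_i(m_i+1)+(t-1)\le k+1$, i.e., $\sum_i m_i\le k+2-2t$. For $t\ge 2$, using $\max(m_i,2)\le m_i+1$, one has $l\le\sum_i m_i+t\le(k+2-2t)+t=k+2-t\le k$. For $t=1$ with $m_1\ge 2$ one gets $l\le m_1\le k$. For $t=1$ with $m_1=1$ one gets $l\le 2$, which is $\le k$ when $k\ge 2$ and otherwise forces the exceptional case $(k,l)=(1,2)$. I expect the main obstacles to be the reduction step (checking that trimming isolated vertices never loses an image vertex) and setting up the componentwise application of Lemma~\ref{lem:dis}; once those are in place, the remainder is bookkeeping with the subpath-cover bound $\sum_i m_i\le k+2-2t$.
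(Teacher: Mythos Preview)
Your argument is correct and in fact more careful than the paper's. Both proofs use Lemma~\ref{lem:dis} as the engine for the converse direction, but the paper invokes it directly for the case $1<k<l$, tacitly assuming the relation has full domain (an assumption consistent with the surrounding results in \S\ref{sect:basic}, though not stated in the Corollary itself). Your proof removes this assumption: you restrict to the domain, trim isolated vertices, and then apply the distance bound componentwise on the resulting disjoint union of sub-paths, combining it with the edge-count $l\le\sum_i\max(m_i,2)$ and the packing inequality $\sum_i m_i\le k+2-2t$. This is more work, but it covers arbitrary relations and, as a bonus, absorbs the paper's separate degree argument for $k=1$, $l>2$ into the single case $t=1$, $m_1=1$, making the treatment uniform. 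One small remark: Lemma~\ref{lem:dis} literally bounds $d_{H_i}(u,v)$ rather than $d_{P_l}(u,v)$, but since $E_{H_i}\subseteq E_{P_l}$ any walk in $H_i$ is a walk in $P_l$, so the latter distance is no larger and your conclusion stands.
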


\begin{proof}
  For $k\geq l$ there is a surjective homomorphism $f$ from $P_k$ to $P_l$
  and hence by Lemma \ref{reho} there is also a relation from $P_k$ to
  $P_l$.  In Section \ref{sect:exam} we already showed a relation from
  $P_1$ to $P_2$.

  To show that $P_1\muls R= P_2$ is the only case with $k<l$ we first
  observe that Lemma \ref{lem:dis} excludes the existence of relation from
  $P_k$ to $P_l$ for $1<k<l$. Now suppose $R$ satisfies $P_1\muls R =P_k$
  for $k>2$. Since $P_k$ has at least 4 vertices, either one of the
  vertices of $P_1$ has at least 3 images so that $P_1\muls R$ has a 
  vertex with degree at least 3, or both of the vertices in $P_1$ have 
  at least 2 images, in which case all vertices of $P_1\muls R$ have degree at
  least 2. In both cases $P_1\muls R$ cannot be a path.
\end{proof}

In particular, $\{P_1, P_2\}$ is the only pair of paths such that there is
a relation betweem them in both directions.

\subsubsection{Complete Graphs} \label{subs:complete}

The \emph{complement graph $\overline{H}$} of a simple graph $H$ has the
same vertex set as $H$, and two vertices are connected in $\overline{H}$ if
and only if they are not connected in $H$.

Note that in this subsection we do not require that the domain of $R$ is
full.

\begin{pro} \label{prop:complete}
  Let $H$ be a simple graph. Then there exists a relation $R$ such that
  $K_k\muls R=H$ if and only if $\overline{H}$ is the disjoint union
  of at most $k$ complete graphs.
\end{pro}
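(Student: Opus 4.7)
The plan is to show that, because $H$ is loop-free, any $R$ with $K_k \muls R = H$ must be injective, which turns $R$ into a (partial) coloring of $V_H$ by $k$ colors; then the equation $K_k \muls R = H$ translates exactly into the statement that two vertices of $H$ are non-adjacent iff they share the same color.

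First I would prove the ``only if'' direction. Assume $K_k \muls R = H$. Since by convention the image of $R$ is full, every $u \in V_H$ has at least one pre-image in $V_{K_k}$. I claim $R$ is injective: if $x \ne y$ were both in $R^{-1}(u)$, then since $K_k$ is complete and loopless we have $(x,y) \in E_{K_k}$, and therefore the definition of $\muls$ would put the loop $(u,u)$ into $E_H$, contradicting that $H$ is simple. So each $u \in V_H$ has a unique pre-image, and $R$ defines a function $f: V_H \to V_{K_k}$ whose fibres partition $V_H$ into at most $k$ sets $V_1, \ldots, V_k$ (some possibly empty). By the definition of $\muls$, distinct $u,v \in V_H$ are adjacent in $H$ exactly when their images $f(u), f(v)$ are distinct vertices of $K_k$, i.e.\ when they lie in different fibres. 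Consequently, distinct $u,v$ are adjacent in $\overline{H}$ iff they lie in the same $V_i$, which means $\overline{H}$ is the vertex-disjoint union of the complete graphs $\overline{H}[V_i]$, dropping the empty fibres.

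For the ``if'' direction I reverse this construction. Suppose $\overline{H}$ decomposes as a disjoint union of complete graphs on vertex sets $V_1, \ldots, V_m$ with $m \le k$. Identify $V_{K_k}$ with $\{1, \ldots, k\}$ and define
\begin{equation}
R = \{(i,u) \mid 1 \le i \le m,\ u \in V_i\} \subseteq V_{K_k} \times V_H.
\end{equation}
Then every $u \in V_H$ lies in exactly one $V_i$, so the image of $R$ is full, and $R$ is injective. A direct check using the definition of $\muls$ shows that $(u,v) \in E_{K_k \muls R}$ iff $u \in V_i$, $v \in V_j$ for some $i \ne j$, which is precisely the condition for $(u,v) \in E_H$.

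The only slightly subtle point is the translation between ``$R$ is injective'' and ``$R$ defines a partition''; everything else is a direct unpacking of the definition of $\muls$. No real obstacle should arise, but one must remember that the domain of $R$ need not be full in this subsection, which is why the partition uses at most (rather than exactly) $k$ parts.
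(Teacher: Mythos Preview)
Your proof is correct and follows essentially the same approach as the paper's: both argue that $R$ must be injective (otherwise $H$ acquires a loop), then use the resulting partition of $V_H$ into at most $k$ color classes to identify the connected components of $\overline{H}$ as complete graphs, and both reverse the construction for the converse. The only cosmetic difference is that you phrase the partition via the function $f=R^{-1}$ and its fibres, whereas the paper phrases it via the images $R(1),\dots,R(m)$.
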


\begin{proof}
  Denote the connected components of $\overline{H}$ by $H_1,\dots,
  H_m$. If $m\leq k$ and every connected component of $\overline{H}$ is a
  complete graph, let $R=\{(i,u)|i=1,\cdots,m,u\in V_{H_i}\}$ and by the
  definition of complement graph, for any $i=1,\cdots,m$, all the vertices
  in $H_i$ are independent in $H$, and $u$ is adjacent to $v$ whenever
  $u\in V_{H_i}$ and $v\in V_{H_j}$ for distinct $i,j$. Hence it is easily
  seen that $K_k\muls R=H$.

  Conversely, if $K_k\muls R=H$, denote the vertices in $K_k$ by
  $1,\cdots,k$, s.t. $\domain R=\{1,\cdots,m\}$. We claim that $R$ is
  injective, otherwise $H$ would have loops. Thus $V_H$ is the disjoint union
  of $R(1),\cdots,R(m)$. For any
  two distinct vertices $u,v$ in $R(i)$, $u$ and $v$ are independent in $H$
  and for distinct $i$ and $j$ every vertex in $R(i)$ are adjacent with every
  vertex in $R(j)$ whenever $R(i)\neq \emptyset$. Therefore for any $i$,
  $R(i)$ is the vertex set of a connect component of $\overline{H}$, which
  is a complete graph.
\end{proof}

\subsubsection{Subgraphs}
\label{sect:subgraph}

Relations between graphs intuitively imply relations between local
subgraphs. In this section we make this concept more precise. Denote by
\begin{equation}
N_G[x]:=\{z\in V_G| z=x \vee (x,z)\in E_G\}
\end{equation}
the \emph{closed neighborhood} of $x$ in $G$.  Furthermore, we let
$\overbar{N_G[x]}:= V_G\setminus N_G[x]$ be the set of vertices that are
not adjacent (or identical) to $x$ in $G$ and denote by
$\overbar{G_x}:=G[\overbar{N_G[x]}]$ the induced subgraph of $G$ that is
obtained by removing the closed neighborhood of a vertex $x$.

Analogously, for a subset $S\subseteq V_G$ we define
\begin{equation}
  \overbar S = G\left[V_G\setminus\bigcup_{x\in S}N_G[x]\right]
\end{equation}
as the induced subgraph obtained by removing all vertices in $S$ and their
neighbors.

Then we have the following result about relations between local subgraphs.

\begin{pro} \label{pro:subgraph}
  Suppose $G\muls R=H$ and $S$ and $D$ are subsets of $V_G$ and $V_H$,
  respectively, such that 
  $G[S] \muls R|_{(S\times D)}=H[D]$, $R|_{(S\times D)}$ has
  full domain on $S$, and there is no isolated vertex in $\overbar D$.
  Then $\overbar S\muls\tilde{R}=\overbar D$,
  where $\tilde{R} = R|_{(\overbar S\times \overbar D)}$ is the
  corresponding restriction of $R$.
\end{pro}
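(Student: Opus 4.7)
The plan is to verify both edge-set inclusions for $\overbar S \muls \tilde R = \overbar D$ together with the image-fullness condition demanded by Definition~\ref{def:muls}, using only $G \muls R = H$ and the assumption that $R|_{S \times D}$ has full domain on $S$.

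The inclusion $E_{\overbar S \muls \tilde R} \subseteq E_{\overbar D}$ is direct: any edge $(u,v)$ of $\overbar S \muls \tilde R$ lifts to $(x,y) \in E_{\overbar S} \subseteq E_G$ with $(x,u),(y,v) \in \tilde R \subseteq R$, and since $G \muls R = H$ we obtain $(u,v) \in E_H$; because $u,v \in V_{\overbar D}$ by construction of $\tilde R$ and $\overbar D$ is an induced subgraph of $H$, this gives $(u,v) \in E_{\overbar D}$.

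The converse inclusion is the main obstacle. Given $(u,v) \in E_{\overbar D}$, I would use $G \muls R = H$ to lift to $(x,y) \in E_G$ with $(x,u),(y,v) \in R$; the crux is then to show $x,y \in V_{\overbar S}$. Suppose for contradiction that $x \in N_G[s]$ for some $s \in S$, and invoke the full-domain hypothesis to pick $d \in D$ with $(s,d) \in R$. If $x=s$, then $(x,d),(y,v) \in R$ together with $(x,y) \in E_G$ force $(d,v) \in E_H$, so $v \in N_H[d]$ and $v \notin V_{\overbar D}$, a contradiction. If instead $x \ne s$ but $(s,x) \in E_G$, then $(s,d),(x,u) \in R$ together with $(s,x) \in E_G$ force $(d,u) \in E_H$, this time contradicting $u \in V_{\overbar D}$. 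The symmetric argument yields $y \in V_{\overbar S}$, and then $(x,u),(y,v) \in \tilde R$ witnesses $(u,v) \in E_{\overbar S \muls \tilde R}$.

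Finally, Definition~\ref{def:muls} requires that every vertex of $V_{\overbar D}$ lies in $\image \tilde R$, and this is precisely where the hypothesis ``no isolated vertex in $\overbar D$'' is consumed: for any $u \in V_{\overbar D}$ choose a neighbor $v$ of $u$ in $\overbar D$, and the lifting argument of the previous paragraph produces an admissible $(x,u) \in \tilde R$ with $x \in V_{\overbar S}$. The delicate step throughout is the case split $x=s$ versus $(s,x) \in E_G$: the two subcases route the newly produced $H$-edge to opposite endpoints ($v$ or $u$ respectively), so one must keep careful track of which endpoint is forced into $N_H[D]$ in each subcase for the contradiction to land correctly.
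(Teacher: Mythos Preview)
Your proof is correct and follows essentially the same strategy as the paper: lift an edge of $\overbar D$ through $G\muls R=H$, then use the full-domain hypothesis on $R|_{S\times D}$ to produce an $H$-edge into $D$ whenever the lifted endpoint lands in $\bigcup_{s\in S}N_G[s]$, contradicting membership in $V_{\overbar D}$. Your organization is in fact a bit cleaner than the paper's: the paper first states a vertex-level dichotomy (with an apparent typo about ``isolated vertex of $\overline S$'') and only afterwards treats edges, whereas your direct edge-level case split $x=s$ versus $(s,x)\in E_G$ handles the situation where $x\in S$ is isolated in $G[S]$ without any separate bookkeeping, since in that case you simply route the contradiction through the already-available lifted edge $(x,y)$.
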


\begin{proof}
  Obviously, $\overbar S\muls\tilde{R}$ is an induced subgraph of $\overbar
  D$. We have to show the reverse inclusion: Given $u\in V_{\overbar{D}}$
  and $x\in R^{-1}(u)$, we first show that there are two possibilities:
  \begin{enumerate}
  \item $x$ is vertex of $\overline{S}$.
  \item $x$ is isolated vertex of $\overline{S}$.
  \end{enumerate}
  Assume that is not the case, i.e., that $x\notin V_{\overline{S}}$ and 
  that $x$ is either an non-isolated vertex of $S$ or $x$ is in the 
  neighborhood of some vertex of $S$. In either case there is $y\in S$ 
  connected by an edge to $x$. Consequently there is also $v\in D$, such 
  that $v\in R(y)$, connected by an edge to $u$.  It follows 
  $u\notin V_{\overbar{D}}$, a contradiction.

  Now consider a arbitrary edge $(u,v)\in E_{\overbar{D}}$. We have 
  $(x,y)\in E_G$ such that $u\in R(x)$ and $v\in R(y)$.
  It follows that $x$ and $y$ are not isolated and thus $x,y$ are 
  vertices of $\overline{S}$.
  Consequently $\overbar S\muls\tilde{R}$ has precisely the same edges 
  as $\overbar D$. Because $\overbar{D}$ has no isolated vertices and 
  thus every vertex is an endpoint of some edge, we know that the vertex set
  of $\overbar S\muls\tilde{R}$ is same as the vertex set of $\overbar D$.
\end{proof}
This result is of particular practical use in the special case where $S$
and $D$ consist of a single vertex.  When looking for a relation $R$ such
that $G\muls R=H$ one can remove a vertex including its neighborhood from
$G$ as well as the prospective image including the neighborhood from $H$
and solve the problem on the subgraphs.

\section{Relational Equivalence}
\label{sect:retract}
Graphs $G$ and $H$ are \emph{homomorphism equivalent} (or
\emph{hom-equivalent}) if there exists homomorphisms $G\to H$ and $H\to
G$. It is well known that every equivalence class of the homomorphism order
contains a minimal representative that is unique up to isomorphism: the
\emph{graph core} \cite{Hell:04}.

We define similar equivalences implied by the existence of
(special) relations between graphs.  In this section, we require all
relations to have full domain unless explicitly stated otherwise.  With
this condition we will show that these equivalences produce a rich structure
closely related to but distinct from the structure of homomorphism
equivalences.

This may come as a surprise: the equivalence implied by the existence of
surjective homomorphisms is not interesting.  Consider two graphs $G$ and
$H$ and suppose there are surjective homomorphisms $f:G\to H$ and $g:H\to
G$. Since every vertex in $V_G$ has at most one image under $f$, we have
$|V_G|\geq |V_H|$.  Analogously $|V_H|\geq |V_G|$, and hence
$|V_G|=|V_H|$. Thus $f$ and $g$ are both bijective, and $G$ is isomorphic
to $H$.

\subsection{Reversible Relations} \label{revers}

\begin{defn}
  A relation $R$ is \emph{reversible with respect to graph $G$} if 
  $(G\muls R)\muls R^+=G$.
\end{defn}

We write $N_G(x):=\{z\in V_G|(x,z)\in E_{G}\}$ for the \textit{open
  neighborhood} of vertex $x$ in graph $G$.
\begin{pro} \label{reve} Suppose $R=R_D\circ R_C$, where $R_D$ and $R_C$
  are constructed as in the proof of Proposition \ref{dec}.  Then $R$ is 
  reversible with respect $G$ if and only if for every $\alpha$ and $\beta$ 
  satisfying $R_C(\alpha)=R_C(\beta)$ we have
  $N_{G\muls R_D}(\alpha)= N_{G\muls R_D}(\beta)$.
\end{pro}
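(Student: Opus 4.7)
The plan is to reduce reversibility to an intrinsic condition on $G$ and then show it matches the statement. First I would rewrite reversibility using Lemma~\ref{lem:compo}: $(G\muls R)\muls R^+ = G\muls(R\circ R^+)$. Since $R$ has full domain, the diagonal $I_G$ is contained in $R\circ R^+$, so $G\subseteq G\muls(R\circ R^+)$ holds automatically, and reversibility is equivalent to the single inclusion $G\muls(R\circ R^+)\subseteq G$. Unpacking $\muls$, this reads as the condition $(\star)$: whenever $(x,y)\in E_G$, $R(u)\cap R(x)\ne\emptyset$, and $R(v)\cap R(y)\ne\emptyset$, then $(u,v)\in E_G$.

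Next I would translate the neighborhood condition from the statement into the same language. Recall from the proof of Lemma~\ref{dec} that, with $R$ having full domain, $B=R$ as a set of pairs, $R_D$ relates $x$ to every $\alpha=(x,p)\in R$, and $R_C$ is the projection $(x,p)\mapsto p$. Injectivity of $R_D$ gives $((x,p),(y,q))\in E_{G\muls R_D}$ iff $(x,y)\in E_G$, and hence
\[
N_{G\muls R_D}((x,p)) = \{(y,q)\in R : (x,y)\in E_G\},
\]
which depends only on $x$. The hypothesis $R_C(\alpha)=R_C(\beta)$ with $\alpha=(x,p),\beta=(z,q)$ forces $p=q$, so $x$ and $z$ share the image $p$ under $R$. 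Then $N_{G\muls R_D}(\alpha)=N_{G\muls R_D}(\beta)$ says that for every $(y,r)\in R$, $(x,y)\in E_G$ iff $(z,y)\in E_G$; since $R$ has full domain, every $y\in V_G$ is the first coordinate of some such pair, and this is precisely $N_G(x)=N_G(z)$. Thus the statement's condition is the condition $(\star\star)$: whenever $x,z\in V_G$ share an image under $R$, $N_G(x)=N_G(z)$.

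Finally I would verify $(\star)\Leftrightarrow(\star\star)$. For $(\star\star)\Rightarrow(\star)$: under the hypotheses of $(\star)$, $(\star\star)$ yields $N_G(u)=N_G(x)$ and $N_G(v)=N_G(y)$, so $y\in N_G(x)=N_G(u)$ and then $u\in N_G(y)=N_G(v)$, giving $(u,v)\in E_G$. For $(\star)\Rightarrow(\star\star)$: if $x,z$ share an image and $w\in N_G(x)$, apply $(\star)$ to the edge $(x,w)$ with $(u,v)=(z,w)$; the first intersection is nonempty by hypothesis, the second because $R(w)\ne\emptyset$ by full domain, so $(z,w)\in E_G$ and $w\in N_G(z)$. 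Swapping $x$ and $z$ yields the reverse inclusion and hence equality.

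The main technical point lies in the translation paragraph: one must recognize that the injectivity of $R_D$ makes $N_{G\muls R_D}((x,p))$ independent of the second coordinate $p$, which is exactly what lets the fiberwise condition on the $R_C$-classes descend to the clean intrinsic condition $N_G(x)=N_G(z)$ on $V_G$. Once this is in place, both implications are short neighborhood chases.
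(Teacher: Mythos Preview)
Your proof is correct, but it follows a genuinely different route from the paper's.

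The paper stays at the level of the intermediate graph $G_1=G\muls R_D$ and argues with the abstract properties of the decomposition: injectivity and full domain/image of $R_D$ give $G_1\muls R_D^+=G$, and for the converse it introduces $G_2=H\muls R_C^+$, shows $G_1=G_2$ from the inclusions $I\subseteq R_C\circ R_C^+$ and $I\subseteq R_D^+\circ R_D$, and then reads off the neighborhood condition from the injectivity of $R_C^+$. The neighborhood chase is done in $G_1$, not in $G$.

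You instead unpack the explicit construction of $R_D,R_C$ from Lemma~\ref{dec} to push everything down to $G$: you recognize that $N_{G\muls R_D}((x,p))$ depends only on $x$, so the statement's condition becomes your $(\star\star)$, namely $R(x)\cap R(z)\neq\emptyset\Rightarrow N_G(x)=N_G(z)$, and you match this directly against the unpacked reversibility condition $(\star)$. This is more elementary and has the pleasant side effect of proving the paper's \emph{next} proposition (the intrinsic characterization of reversibility in terms of $N_G$) simultaneously; in the paper that proposition is deduced as a corollary of the present one via exactly the translation you carry out. The paper's argument, by contrast, is more structural and would transfer unchanged to any decomposition $R=R_D\circ R_C$ with $R_D$ injective and bijective on domains/images and $R_C$ functional with full domain, not just the specific one from Lemma~\ref{dec}.
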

\begin{proof}
  We set $G_1=G\muls R_D$, then from Lemma \ref{lem:compo} we have 
  $G_1\muls R_C=H$. If $R_C(\alpha)=R_C(\beta)$ implies 
  $N_{G_1}(\alpha)= N_{G_1}(\beta)$, then $H\muls R_C^+=G_1$.
  Since $G_1\muls R_D^+= H$, we have 
  $H\muls R_C^+ \muls R_D^+ = H\muls R^+ = G$, i.e., $R$ is reversible.

  Conversely, since $R$ is
  reversible, i.e., $H\muls R^+=G$, setting $G_2 = H\muls R_C^+$ gives
  $G_2\muls R_D^+=G$. Hence $G_1\muls R_C\muls R_C^+=G_2$ and $G_2\muls
  R_D^+\muls R_D=G_1$. From $I_{G_1}\subseteq R_C\muls R_C^+$ we conclude
  $G_1\subseteq G_2$, and similarly $I_{G_2}\subseteq R_D^+\muls R_D$ yields
  $G_1\supseteq G_2$. Hence $G_1=G_2$. $R_C^+$ is injective, hence
  $\alpha,\beta\in V_{G_2}=V_{G_1}$ has the same open neighborhood whenever
  the pre-image of $\alpha$ and $\beta$ under $R_C^+$ coincide,
  i.e. $R_C(\alpha)=R_C(\beta)$.  
\end{proof}

$R_D$ is an injective relation, hence one can easily get
$N_{G\muls R_D}(\alpha)=R_D(N_G(x))$ provided that $(x,\alpha)\in R_D$. On the
other hand, if we define $R$ to be the image of $R_D$ as in the proof of
Proposition \ref{dec}, then $R_C(\alpha)=R_C(\beta)$ implies there are two
distinct vertices $x,y\in V_G$, s.t. $(x,u),(y,u)\in R$, where
$u=R_C(\alpha)=R_C(\beta)$, and verse visa. Using Proposition \ref{reve} 
we thus obtain
\begin{pro}
  A relation $R$ is reversible with respect to $G$ if and only if for every
  two vertices $x$ and $y$ such that $R(x)\cup R(y) \neq \emptyset$ we have
  $N_G(x)=N_G(y)$.
\end{pro}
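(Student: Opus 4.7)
The plan is to derive this statement as an immediate corollary of Proposition~\ref{reve}, by unwinding the canonical decomposition $R = R_D \circ R_C$ from Lemma~\ref{dec}. The two required translations have essentially been set up in the paragraph preceding the proposition, so the argument should be short.

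First, I would translate the premise ``$R_C(\alpha) = R_C(\beta)$'' from Proposition~\ref{reve}. In Lemma~\ref{dec} one takes $B := R$ and defines $R_C$ so that a pair $\alpha = (y,q) \in R$ is sent to $q$. Hence $R_C(\alpha) = R_C(\beta) = u$ is equivalent to $\alpha = (x,u)$ and $\beta = (y,u)$ for some $x, y \in V_G$, i.e.\ to the existence of a common $u \in R(x) \cap R(y)$. (I read the printed condition $R(x) \cup R(y) \neq \emptyset$ as an intersection, since under the full-domain assumption the union is automatically nonempty and so the union reading would be vacuous.) This correspondence is clearly a bijection between pairs $(\alpha,\beta)$ with $R_C(\alpha) = R_C(\beta)$ and pairs $(x,y)$ of vertices of $G$ sharing an $R$-image.

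Second, I would translate the conclusion ``$N_{G \muls R_D}(\alpha) = N_{G \muls R_D}(\beta)$'' into a statement about $G$ alone. The paragraph preceding the proposition supplies the identity $N_{G \muls R_D}(\alpha) = R_D(N_G(x))$ whenever $(x, \alpha) \in R_D$, because $R_D$ is injective with full domain. Since $R_D$ is injective, the map $z \mapsto R_D(z)$ sends distinct vertices of $V_G$ to disjoint, nonempty sets of pairs; therefore $R_D(N_G(x)) = R_D(N_G(y))$ collapses to $N_G(x) = N_G(y)$, and the converse is immediate.

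Combining the two translations with Proposition~\ref{reve} yields exactly the claimed biconditional. The main obstacle is pure bookkeeping: one has to verify that the dictionary $(\alpha,\beta) \leftrightarrow (x,y)$ is faithful in both directions so that the universal quantifiers in the two formulations line up, and to note that the degenerate cases $x = y$ (respectively $\alpha = \beta$) make the conclusion trivial and so can be harmlessly included. Once this dictionary is nailed down, no further argument is required beyond invoking Proposition~\ref{reve}.
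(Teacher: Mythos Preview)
Your proposal is correct and matches the paper's approach exactly: the paper derives this proposition directly from Proposition~\ref{reve} via precisely the dictionary you describe, with the two translations spelled out in the paragraph immediately preceding the statement. Your reading of $R(x)\cup R(y)$ as $R(x)\cap R(y)$ is also right---this is a typo in the paper, since under the standing full-domain assumption the union is always nonempty.
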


\subsection{Strong Relational Equivalence}

\begin{defn}
  Two graphs $G$ and $H$ are \emph{(strongly) relationally equivalent}, $G
  \backsim H$, if there is a relation $R$ such that $G\muls R=H$ and
  $H\muls R^+=G$.
\end{defn}

\begin{lemma}
  Relational equivalence is an equivalence relation on graphs.
\end{lemma}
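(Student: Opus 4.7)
The plan is to verify the three axioms of an equivalence relation directly from the definition, leveraging the composition law (Lemma \ref{lem:compo}) and the identity $(R\circ S)^+ = S^+\circ R^+$ noted at the start of Section \ref{sect:basic}.

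For \emph{reflexivity}, I would take $R = I_G$, the identity on $V_G$. Since $I_G^+ = I_G$ and $I_G^+\circ G\circ I_G = G$, we immediately obtain $G\muls I_G = G$ on both sides, so $G \backsim G$.

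For \emph{symmetry}, suppose $G \backsim H$ via a relation $R$ with $G\muls R = H$ and $H\muls R^+ = G$. Then $R^+$ itself witnesses $H\backsim G$: the first equation gives $H\muls R^+ = G$, while the second, rewritten using $(R^+)^+ = R$, gives $G\muls (R^+)^+ = G\muls R = H$.

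For \emph{transitivity}, the main (though still routine) point is to combine the two witnessing relations. Suppose $G\backsim H$ is witnessed by $R$ and $H\backsim K$ is witnessed by $S$. I claim $T := R\circ S$ witnesses $G\backsim K$. By Lemma \ref{lem:compo},
\begin{equation*}
G\muls T = G\muls (R\circ S) = (G\muls R)\muls S = H\muls S = K.
\end{equation*}
Using $T^+ = (R\circ S)^+ = S^+\circ R^+$ and Lemma \ref{lem:compo} again,
\begin{equation*}
K\muls T^+ = K\muls (S^+\circ R^+) = (K\muls S^+)\muls R^+ = H\muls R^+ = G.
\end{equation*}
Hence $G\backsim K$, completing the proof. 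I do not anticipate any genuine obstacle here; the only thing to be careful about is applying the composition law in the correct associative order and tracking the transpose when chaining witnesses.
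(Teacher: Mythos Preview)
Your proof is correct and follows essentially the same route as the paper: reflexivity via $I_G$, symmetry via $R^+$, and transitivity by composing the two witnessing relations and applying Lemma~\ref{lem:compo} together with $(R\circ S)^+ = S^+\circ R^+$. The paper's argument is slightly terser but otherwise identical.
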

\begin{proof}
  The relation $\backsim$ is reflexive since $G\muls I_G=G$. Symmetry also
  follows directly from the definition.  Suppose $G\muls R=H$ and $H\muls
  R^+=G$ and $H\muls Q=K$ and $K\muls Q^+=H$, i.e., $(G\muls R)\muls Q=K$
  and $(K\muls Q^+)\muls R^+=G$, i.e., $G\muls(R\circ Q)=K$ and
  $K\muls(Q^+\circ R^+)= K\muls(R\circ Q)^+=G$, i.e., $\backsim$ is also
  transitive.
\end{proof}

\begin{defn}
  The \emph{thinness relation} $S$ of $G$ is the equivalence relation on
  $V_G$ defined by $(x,y)\in S$ if and only if $N_G(x)=N_G(y)$. A
    graph $G$ is called \emph{thin} if every vertex forms its own class in
    $S$.
\end{defn} 
Thin graphs are also known as ``point determining graphs''
\cite{Sumner:73}.

We denote by $\mathcal{S}$ the corresponding partition of $V_G$, and write
$R_S\subseteq V_G\times\mathcal{S}$ for the relation that associates
each vertex with its $S$-equivalence class, i.e., $(x,\beta)\in R_S$ if and
only if $x\in\beta$.

\begin{defn}
  The \emph{thin graph of $G$}, denoted by $G_{\text{thin}}$, is the
  quotient graph $G/S$, i.e., $G_{\text{thin}}$ has vertex set
  $\mathcal{S}$ and two equivalence classes $\sigma$ and
  $\tau$ of $S$ are adjacent in $G_{\text{thin}}$ if and only if $(x,y)$ 
  is an edge of $G$ with $x\in\sigma$ and $y\in\tau$.
\end{defn}
As noted e.g.\ in \cite[p.81]{Hammack:11}, $G_{\text{thin}}$ is itself a
thin graph. Furthermore, $R_S$ is a full homomorphism of $G$ to
$G_{\text{thin}}$, see \cite{Feder:08}.

Thinness and the quotients w.r.t.\ the thinness relation play an important
role in particular in the context of product graphs, see
\cite{Imrich:00a}. In this context it is well known that $G$ can be
reconstructed from $G_{\text{thin}}$ and the knowledge of the
$S$-equivalence classes. In fact, we have
\begin{equation}
\label{eq:thin-1}
  G_{\text{thin}}\muls {R_S}^+=G.
\end{equation}

\begin{thm} \label{thm:equi}
  $G$ and $H$ are in the same equivalence class w.r.t.\ $\backsim$ if and
  only if their thin graphs are isomorphic.
\end{thm}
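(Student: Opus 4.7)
The plan is to establish the two directions separately, using the thin graph as a bridge.

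For the easier direction ($G_{\text{thin}} \cong H_{\text{thin}} \Rightarrow G \backsim H$), note that the quotient definition gives $G \muls R_S = G_{\text{thin}}$ directly, while equation~(\ref{eq:thin-1}) gives $G_{\text{thin}} \muls R_S^+ = G$. So $R_S$ witnesses $G \backsim G_{\text{thin}}$, and by symmetry $H \backsim H_{\text{thin}}$. An isomorphism $G_{\text{thin}} \to H_{\text{thin}}$, viewed as a bijective functional relation together with its inverse, witnesses $G_{\text{thin}} \backsim H_{\text{thin}}$. Transitivity of $\backsim$ then yields $G \backsim H$.

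For the harder direction, suppose $G \muls R = H$ and $H \muls R^+ = G$, with $R$ of full domain. The key consequence of the reversibility characterization developed after Proposition~\ref{reve} is the following: reversibility of $R$ w.r.t.\ $G$ forces $N_G(x) = N_G(x')$ whenever the images $R(x)$ and $R(x')$ overlap, and symmetrically reversibility of $R^+$ w.r.t.\ $H$ forces $N_H(u) = N_H(u')$ whenever the preimages $R^{-1}(u)$ and $R^{-1}(u')$ overlap. I propose to define
\[
  \varphi \colon \mathcal{S}_G \to \mathcal{S}_H, \qquad \varphi([x]_{S_G}) = [u]_{S_H} \text{ for any } u \in R(x),
\]
and to show that $\varphi$ is a bijection that preserves and reflects adjacency.

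The crucial step is well-definedness: if $N_G(x) = N_G(y)$, then any $u \in R(x)$ and $v \in R(y)$ must satisfy $N_H(u) = N_H(v)$. Given $w \in N_H(u)$, the defining property of $G \muls R = H$ supplies an edge $(x', y') \in E_G$ with $x' \in R^{-1}(u)$ and $y' \in R^{-1}(w)$; since $x, x' \in R^{-1}(u)$ forces $N_G(x') = N_G(x) = N_G(y)$, we obtain $y' \in N_G(y)$, and combined with $v \in R(y)$, $w \in R(y')$ this yields $(v,w) \in E_H$, so $w \in N_H(v)$. The reverse inclusion is symmetric. The analogous construction applied to $R^+$ produces an inverse $\psi$ of $\varphi$, and edge preservation in both directions follows immediately from $G \muls R = H$ and $H \muls R^+ = G$ respectively.

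The main obstacle is precisely this well-definedness check, because it requires using both reversibility conditions simultaneously and exploiting the fact that the $H$-neighborhood of any $u$ can be read off from the common $G$-neighborhood shared by the vertices in $R^{-1}(u)$. Once that identification is in place, bijectivity of $\varphi$ and preservation of edges reduce to brief, largely formal verifications.
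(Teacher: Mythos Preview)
Your argument is correct, but it takes a different route from the paper for the harder direction. The paper first uses transitivity to reduce to the thin graphs themselves: since $G\backsim G_{\text{thin}}$ and $H\backsim H_{\text{thin}}$ via $R_S$ and equation~(\ref{eq:thin-1}), one has $G_{\text{thin}}\backsim H_{\text{thin}}$; then, working with a reversible relation $R$ between the two \emph{thin} graphs, the reversibility characterization forces $R$ to be both injective and functional (any overlap in images or preimages would produce two vertices with equal neighborhoods, contradicting thinness), so $R$ is already a bijection and hence an isomorphism. You instead keep the original relation between $G$ and $H$ and build the map on thinness classes directly, doing the neighborhood-chasing by hand to verify well-definedness. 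The paper's reduction is shorter and essentially one-line once the reversibility proposition is in place; your approach is more explicit and makes visible exactly how the isomorphism of thin graphs is induced by $R$ without first replacing $R$ by a relation between the thin graphs.
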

\begin{proof}
  Assume $G\backsim H$. From Equation(\ref{eq:thin-1}) we know that 
  $G\backsim G_{\text{thin}}$, $H\backsim H_{\text{thin}}$, so 
  $G_{\text{thin}}\backsim H_{\text{thin}}$. 
  Now we claim that $G_{\text{thin}}$ and $H_{\text{thin}}$ are isomorphic. 
  Suppose $G_{\text{thin}}\muls R=H_{\text{thin}}$, then the pre-image 
  of $R$ is unique. 
  Otherwise, there exist distinct vertices $x,y\in V_{G_{\text{thin}}}$ 
  such that $R(x)=R(y)$, then 
  $N_{G_{\text{thin}}}(x)=N_{G_{\text{thin}}}(y)$, contradicting
  thinness. Likewise, the pre-image of $R^{-1}$ is unique, i.e., the image
  of $R$ is unique. Hence $R$ is one-to-one.
\end{proof}

\begin{figure}
\begin{center}
  \begin{tabular}{ccc}
    \includegraphics[height=0.2\textwidth]{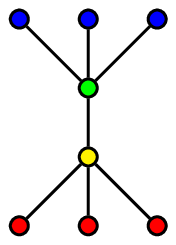} &
    \includegraphics[height=0.2\textwidth]{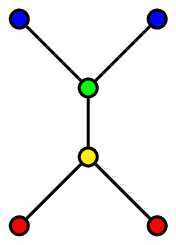} &
    \includegraphics[height=0.2\textwidth]{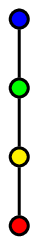} \\
    $G$ & $H$ & $G_{\text{thin}}=H_{\text{thin}}$\\
  \end{tabular}
\end{center}
\caption{Non-isomorphic graphs $G$ and $H$ with isomorphic thin graphs.}
\label{fig:thin}
\end{figure}

The example in Fig.~\ref{fig:thin} shows that thin graphs can be isomorphic
while $G$ and $H$ themselves are not isomorphic. Relational equivalence
thus is coarser than graph isomorphism (surjective homomorphic equivalence)
but stronger than homomorphic equivalence.

\subsection{Weak Relational Equivalence}

\begin{defn}
  Two graphs $G$ and $H$ are \emph{weak relationally equivalent}, $G
  \backsim_w H$, if there are relations $R$ and $S$ such that $G\muls R=H$
  and $H\muls S=G$.
\end{defn}

\begin{lemma}
  Weak relational equivalence is an equivalence relation on graphs.
\end{lemma}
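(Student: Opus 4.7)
The plan is to verify the three defining properties of an equivalence relation: reflexivity, symmetry, and transitivity, using the composition law (Lemma~\ref{lem:compo}) as the main tool. The key conceptual difference from the strong case is that here the two witnessing relations $R$ and $S$ are completely independent (rather than being forced to be transposes of each other), so the argument is actually cleaner than in the strong case.

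First, for reflexivity I would take $R = S = I_G$ and observe directly from the definition of $\muls$ that $G \muls I_G = G$, so $G \backsim_w G$. For symmetry, the definition is already symmetric in $G$ and $H$: if $R$ and $S$ witness $G \backsim_w H$ via $G \muls R = H$ and $H \muls S = G$, then the same pair (with roles swapped) witnesses $H \backsim_w G$.

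The only substantive step is transitivity. Suppose $G \backsim_w H$ via $R_1, S_1$ with $G \muls R_1 = H$ and $H \muls S_1 = G$, and $H \backsim_w K$ via $R_2, S_2$ with $H \muls R_2 = K$ and $K \muls S_2 = H$. Applying Lemma~\ref{lem:compo} twice, I would compute
\begin{equation*}
  G \muls (R_1 \circ R_2) = (G \muls R_1) \muls R_2 = H \muls R_2 = K,
\end{equation*}
and symmetrically
\begin{equation*}
  K \muls (S_2 \circ S_1) = (K \muls S_2) \muls S_1 = H \muls S_1 = G.
\end{equation*}
Hence $R_1 \circ R_2$ and $S_2 \circ S_1$ witness $G \backsim_w K$.

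I do not expect any real obstacle here; the result is essentially a direct corollary of associativity of relational composition packaged by Lemma~\ref{lem:compo}. The only point worth flagging is that, unlike the strong case, we never needed to verify that $(R_1 \circ R_2)^+ = R_2^+ \circ R_1^+$ links the forward and backward witnesses, because weak equivalence does not demand any relationship between $R$ and $S$.
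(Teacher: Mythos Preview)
Your proof is correct and follows essentially the same approach as the paper: reflexivity via $I_G$, symmetry by the symmetry of the definition, and transitivity by composing the witnessing relations using Lemma~\ref{lem:compo}. The paper's argument is identical in structure, so there is nothing to add.
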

\begin{proof}
  By definition $\backsim_w$ is symmetric. Because $G\muls I_G=G$, relation
  $\backsim_w$ is reflexive. Suppose $G\backsim_w G'$ and $G'\backsim_w
  G''$. Thus there are relations $R$, $S$, $R'$, and $S'$, such that
  $G'=G\muls R$, $G''=G'\muls R'$, $G=G'\muls S$, and $G'=G''\muls S'$. By
  the composition law (Lemma \ref{lem:compo}) it follows that
  $G''=G\muls(R\circ R')$ and $G=G''\muls(S'\circ S)$, i.e, $G\backsim_w
  G''$. Hence $\backsim_w$ is transitive. 
\end{proof}

Strong relational equivalence implies weak relational equivalence. To see
this, simply observe that the definition of the weak form is obtained from
the strong one by setting $S=R^+$. 

\begin{figure}
\begin{center}
  \begin{tabular}{ccc}
    \includegraphics[height=0.2\textwidth]{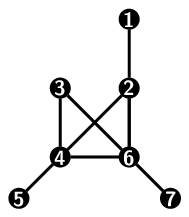} &
    \includegraphics[height=0.2\textwidth]{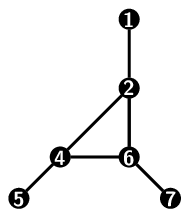}  \\
  \end{tabular}
\end{center}
\caption{$G$ and $H$ are weakly relationally equivalent but have
  non-isomorphic thin graphs.}
\label{fig:weak}
\end{figure}

The converse is not true, as shown by the graphs $G$ and $H$ in
Fig.~\ref{fig:weak}: It is easy to see that their thin graphs are different
and thus $G$ and $H$ are not strongly relationally equivalent. However,
are relationally equivalent. To get relation from $G$ to $H$ contract
vertices 2 and 3 and keep other vertices on place, i.e.,
$$R=\{(1,1),(2,2),(3,2),(4,4),(5,5),(6,6),(7,7)\}.$$ To get relation
from $H$ to $G$, duplicate 5 and 7 and contract them together to 3,
$$S=\{(1,1),(2,2),(4,4),(5,5),(6,6),(7,7),(5,3),(7,3)\}.$$
Consequently, weak relational equivalence is coarser than strong relational
equivalence.

\subsection{R-cores}

A graph is an \emph{R-core}, if it is the smallest graph (in the number of
vertices) in its equivalence of $\backsim_w$.

This notion is analogous to the definition of graph cores. In this section
we show properties of R-cores that are similar to the properties of graph
cores. To this end we first need to develop a simple characterization of
R-cores.

Again we start from a decomposition of relations. Consider a relation 
$R$ such that $G\muls R = H$. We seek for pair of relations $R_1$ and 
$R_2$ such that $R=R_1\circ R_2$. In contrast to Lemma \ref{dec}, however,
we now look for a decomposition so that the graph $G'=G\muls R_1$ is 
smaller (in the number of vertices) than $G$.
\begin{equation}
\xymatrix{
G \ar[dr]_{R_1}  \ar@{->}[rr]^{R=R_1\circ R_2} & & H \\
  & G' \ar[ur]_{R_2} &
}
\end{equation}
The existence of such a decomposition follows from a translation of 
the well-known Hall Marriage Theorem \cite{Schrijver:03} to the language of 
relations. We say that the relation $R\subseteq A\times B$ satisfies the 
\textit{Hall condition}, if for every $S\subseteq A$ we have $|S|\leq |R(S)|$.

\begin{thm}[Hall's theorem] \label{thm:hall}
  If $G\muls R = H$ and $R$ satisfies the Hall condition, then $R$
  contains a monomorphism $f:G\rightarrow H$.
\end{thm}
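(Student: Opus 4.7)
The plan is to translate the statement into the language of bipartite matchings and apply the classical Hall marriage theorem. Interpret the relation $R\subseteq V_G\times V_H$ as a bipartite graph $B_R$ with parts $V_G$ and $V_H$, placing an edge between $x\in V_G$ and $u\in V_H$ exactly when $(x,u)\in R$. The neighborhood in $B_R$ of any $S\subseteq V_G$ is precisely $R(S)$, so the Hall condition stated in the paper, namely $|S|\leq |R(S)|$ for every $S\subseteq V_G$, is verbatim the hypothesis of Hall's marriage theorem for a matching that saturates the $V_G$-side. (As a side remark, applying it to singletons automatically gives $R(x)\neq\emptyset$ for every $x$, so $R$ has full domain; this is the implicit piece of information we will actually use.)

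The classical Hall theorem therefore yields a matching $M$ in $B_R$ covering every vertex of $V_G$. Viewing $M$ back as a subrelation $f\subseteq R$, being a matching that saturates $V_G$ translates exactly into the statement that $f$ is a functional, injective relation with $\domain f = V_G$; in other words, $f$ is a well-defined injective map $f\colon V_G\to V_H$ with $f\subseteq R$.

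To finish, I would verify that $f$ is a graph homomorphism. Given any $(x,y)\in E_G$, the definition of $\muls$ applied to $f$ shows $(f(x),f(y))\in G\muls f$. Since $f\subseteq R$, the monotonicity $G\muls f\subseteq G\muls R$ (immediate from the definition of $\muls$) gives $(f(x),f(y))\in G\muls R=H$. Combined with injectivity, this makes $f$ an injective homomorphism, i.e.\ a monomorphism from $G$ to $H$ contained in $R$.

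The only non-routine point in the argument is setting up the correct dictionary: that the Hall condition stated in terms of relations matches the Hall condition for the associated bipartite graph $B_R$, and that an injective functional subrelation of $R$ with full domain on $V_G$ is the same object as a matching in $B_R$ saturating $V_G$. Once this dictionary is explicit, the conclusion follows at once from Hall's theorem together with the monotonicity of $\muls$ in its second argument.
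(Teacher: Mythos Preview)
Your argument is correct and follows essentially the same route as the paper: both translate the Hall condition on $R$ into the hypothesis of the classical Hall marriage theorem (you via the bipartite graph $B_R$, the paper via set systems/SDRs) and read off an injective functional subrelation $f\subseteq R$ with full domain. Your write-up is in fact more explicit than the paper's, since you spell out why such an $f$ is a graph homomorphism (via monotonicity of $\muls$), whereas the paper simply asserts that the system of distinct representatives ``directly corresponds to a monomorphism.''
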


\begin{proof}
  The Hall Marriage Theorem is usually described on set systems. For set
  systems satisfying the Hall condition, the theorem guarantees the
  existence of a system of distinct representatives, see i.e.\
  \cite{Schrijver:03}.  Relations can be seen as set systems (defined by
  the images of individual vertices). Furthermore, in our setting the
  system of distinct representatives directly corresponds to a monomorphism
  contained in the relation $R$.
\end{proof}

\begin{lemma} \label{lem:nohall} If $G\muls R = H$ and relation $R$ does
  not satisfy the Hall condition, then there are relations $R_1$ and $R_2$
  such that $R=R_1\circ R_2$, and the number of vertices of graph
  $G'=G\muls R_1$ is strictly smaller than the number of vertices of $G$.
\end{lemma}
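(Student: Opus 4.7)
The plan is to turn a failure of the Hall condition into an explicit ``collapse'' of vertices that produces a smaller intermediate graph. Concretely, since $R$ violates the Hall condition, we can fix a witness set $S\subseteq V_G$ with $|R(S)|<|S|$. The idea is to route $R$ through a new vertex set $B$ that uses the smaller set $R(S)\subseteq V_H$ in place of the oversized set $S$, leaving the rest of $V_G$ untouched; the strict inequality $|R(S)|<|S|$ will then directly give $|B|<|V_G|$.

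Formally, I would take $B$ to be the disjoint union $B=(V_G\setminus S)\sqcup R(S)$, so that $|B|=|V_G|-|S|+|R(S)|<|V_G|$. Then define $R_1\subseteq V_G\times B$ by
\begin{equation*}
R_1=\bigl\{(v,v)\mid v\in V_G\setminus S\bigr\}\;\cup\;\bigl\{(v,u)\mid v\in S,\ u\in R(v)\bigr\},
\end{equation*}
(where in the second clause $u$ is interpreted as an element of the $R(S)$-copy inside $B$), and define $R_2\subseteq B\times V_H$ by
\begin{equation*}
R_2=\bigl\{(v,u)\mid v\in V_G\setminus S,\ u\in R(v)\bigr\}\;\cup\;\bigl\{(u,u)\mid u\in R(S)\bigr\}.
\end{equation*}
Before invoking $\muls$, I would check that both relations have full image in the required sense: every $b\in V_G\setminus S$ is hit by $(b,b)\in R_1$, every $b\in R(S)\subseteq B$ is hit by some $(v,b)\in R_1$ with $v\in S$ by the definition of $R(S)$, and similarly every $u\in V_H$ is hit under $R_2$ by going either through its preimage in $V_G\setminus S$ or through its own copy in $R(S)\subseteq B$.

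The central verification is $R=R_1\circ R_2$, which splits into two cases. If $v\in V_G\setminus S$, the only $b$ with $(v,b)\in R_1$ is $b=v$ itself, and then $(b,u)\in R_2$ is equivalent to $u\in R(v)$. If $v\in S$, the admissible $b$ are precisely the elements of $R(v)\subseteq R(S)\subseteq B$, and for such $b$ the relation $R_2$ forces $u=b$; again the composition recovers exactly $u\in R(v)$. Once $R=R_1\circ R_2$ is established, the equality $G'\muls R_2=(G\muls R_1)\muls R_2=G\muls(R_1\circ R_2)=G\muls R=H$ is immediate from the composition law (Lemma~\ref{lem:compo}), so no separate edge-by-edge check of $G'\muls R_2$ is needed.

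I expect the only real subtlety to be bookkeeping: making sure $V_G\setminus S$ and $R(S)$ are treated as disjoint copies inside $B$ even when $V_G$ and $V_H$ happen to share labels, and making sure the ``full image'' hypothesis of Definition~\ref{def:muls} is respected for both $R_1$ and $R_2$ so that $G\muls R_1$ and $G'\muls R_2$ are actually legal expressions. Once the disjoint-union convention is fixed, the case analysis for $R=R_1\circ R_2$ and the counting inequality $|B|<|V_G|$ are straightforward, and the rest follows from Lemma~\ref{lem:compo}.
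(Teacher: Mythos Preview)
Your proposal is correct and follows essentially the same construction as the paper: both pick a witness set $S$ with $|R(S)|<|S|$, route $R_1$ as $R$ on $S$ and the identity elsewhere, and route $R_2$ as the identity on $R(S)$ and $R$ elsewhere. The only cosmetic difference is that the paper disposes of the bookkeeping by assuming $V_G\cap V_H=\emptyset$ without loss of generality, whereas you build $B$ as a formal disjoint union $(V_G\setminus S)\sqcup R(S)$ and explicitly verify the full-image conditions; your treatment is slightly more careful but not substantively different.
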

\begin{proof}
  Without loss of generality assume that $V_G\cap V_H=\emptyset$.  If $R$
  does not satisfy the Hall condition, then there exist a vertex set $S\subset
  V_G$ such that $|S|>|R(S)|$. Now we define relations $R_1$ and $R_2$ as
  follows:
  \begin{equation}
    R_1(x)=
    \begin{cases} R(x) \text{   for $x\in S$,}
      \\
      x \text{   otherwise,}
    \end{cases}
\qquad\qquad\qquad
    R_2(x)=
    \begin{cases} x \text{   for $x\in R(S)$,}
      \\
      R(x) \text{   otherwise.}
    \end{cases}
  \end{equation}
Obviously $R_1\circ R_2 = R$ and $|V_{G'}|=|V_G|-(|S|-|R(S)|)<|V_G|$.
\end{proof}

This immediately gives a necessary, but in general not sufficient,
condition for a graph to be an R-core.

\begin{cor}\label{cor:nohall2}
  If $G$ is an R-core, then every relation $R$ such that $G\muls R=G$
  satisfies the Hall condition and thus contains a monomorphism.
\end{cor}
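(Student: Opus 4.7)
The plan is to argue by contradiction, using Lemma~\ref{lem:nohall} to extract a strictly smaller graph in the same weak relational equivalence class whenever the Hall condition fails. The second conclusion will then be immediate from Theorem~\ref{thm:hall}.

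Concretely, suppose $G$ is an R-core and assume for contradiction that some relation $R$ with $G\muls R = G$ does not satisfy the Hall condition. By Lemma~\ref{lem:nohall} we obtain a decomposition $R = R_1\circ R_2$ such that the graph $G' := G\muls R_1$ has $|V_{G'}| < |V_G|$. I would then apply the composition law (Lemma~\ref{lem:compo}) to the identity $G\muls R = G$: namely
\begin{equation*}
G' \muls R_2 \;=\; (G\muls R_1)\muls R_2 \;=\; G \muls (R_1\circ R_2) \;=\; G \muls R \;=\; G.
\end{equation*}
Combined with $G\muls R_1 = G'$, this exhibits relations in both directions between $G$ and $G'$, so $G \backsim_w G'$. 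Since $G'$ has strictly fewer vertices than $G$, this contradicts the defining property of an R-core.

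Hence every relation $R$ with $G\muls R = G$ must satisfy the Hall condition. Applying Theorem~\ref{thm:hall} to any such $R$ then yields a monomorphism $f:G\to G$ contained in $R$, which finishes the proof.

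The only substantive step is the verification that $G'$ is weakly relationally equivalent to $G$; this is purely formal once Lemma~\ref{lem:nohall} is in hand, thanks to the associativity encoded in the composition law. So I do not expect a genuine obstacle here — the corollary is essentially a repackaging of Lemma~\ref{lem:nohall} together with the minimality definition of an R-core, with Theorem~\ref{thm:hall} supplying the monomorphism conclusion.
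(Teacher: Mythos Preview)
Your proposal is correct and follows essentially the same route as the paper: both argue by contradiction, invoke Lemma~\ref{lem:nohall} to produce a strictly smaller $G'$ with relations in both directions (hence $G\backsim_w G'$), and then appeal to Theorem~\ref{thm:hall} for the monomorphism. Your version is slightly more explicit in spelling out the use of the composition law to verify $G'\muls R_2 = G$, but the underlying argument is identical.
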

\begin{proof}
  Assume that there is a relation $R$ that does not satisfy the Hall
  condition.  Then there is a graph $G'$, $|V_{G'}|<|V_G|$, and relations
  $R_1$ and $R_2$ such that $G\muls R_1=G'$ and $G'\muls
  R_2=G$. Consequently $G'$ is a smaller representative of the equivalence
  class of $\backsim_w$, a contradiction with $G$ being R-core.
\end{proof}

To see that the condition of Corollary \ref{cor:nohall2} is not sufficient
consider a graph consisting of two independent vertices.

Next we show that R-cores are, up to isomorphism, unique representatives of
the equivalence classes of $\backsim_w$.

\begin{pro}\label{pro:iso}
  If both $G$ and $H$ are R-cores in the same equivalence class of
  $\backsim_w$, then $G$ and $H$ are isomorphic.
\end{pro}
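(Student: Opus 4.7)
The plan is to use the R-core hypothesis to show that \emph{each} of the two witnessing relations already contains an injective homomorphism, and then to promote the resulting pair of injective homomorphisms to an isomorphism by a cardinality argument.

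First I would unpack the hypothesis: there exist relations $R \subseteq V_G \times V_H$ and $S \subseteq V_H \times V_G$ with $G \muls R = H$ and $H \muls S = G$. I claim that $R$ itself satisfies the Hall condition. Suppose not. Then Lemma \ref{lem:nohall} yields a factorization $R = R_1 \circ R_2$ and a graph $G' = G \muls R_1$ with $|V_{G'}| < |V_G|$. By the composition law (Lemma \ref{lem:compo}), $G' \muls R_2 = H$, hence $G' \muls (R_2 \circ S) = H \muls S = G$. Combined with $G \muls R_1 = G'$ this shows $G \backsim_w G'$, contradicting the R-core property of $G$. By the symmetric argument (applied to $H$ and $S$), $S$ also satisfies the Hall condition.

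Next, I apply Theorem \ref{thm:hall} (Hall) to $R$ and $S$ separately, obtaining injective homomorphisms $\phi\colon G \to H$ contained in $R$ and $\psi\colon H \to G$ contained in $S$. Their mere existence forces $|V_G| \leq |V_H|$ and $|V_H| \leq |V_G|$, so $|V_G| = |V_H|$ and both $\phi$ and $\psi$ are vertex bijections. To promote $\phi$ to an isomorphism, observe that, being a vertex bijection that is a homomorphism, $\phi$ induces an injection $E_G \hookrightarrow E_H$ via $(x,y) \mapsto (\phi(x),\phi(y))$, so $|E_G| \leq |E_H|$; symmetrically $\psi$ gives $|E_H| \leq |E_G|$. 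Equality of (finite) edge counts forces the edge map induced by $\phi$ to be a bijection, so every edge of $H$ is of the form $(\phi(x),\phi(y))$ for some $(x,y) \in E_G$. Hence $\phi^{-1}$ is also a homomorphism, and $\phi$ is an isomorphism $G \to H$.

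The only nontrivial step is the first one: turning the R-core property into the Hall condition on $R$ itself rather than merely on the composite $R \circ S$ (for which Corollary \ref{cor:nohall2} would suffice). Lemma \ref{lem:nohall} is exactly the tool that makes this upgrade possible, because a failure of Hall for $R$ produces an intermediate graph strictly smaller than $G$ but still weakly relationally equivalent to it. Once this is secured, the remainder of the argument is bookkeeping with injective maps between finite sets of equal size.
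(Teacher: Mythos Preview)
Your proof is correct and follows essentially the same route as the paper: both argue that the witnessing relations $R$ and $S$ must satisfy the Hall condition (via Lemma~\ref{lem:nohall}, since a failure would produce a strictly smaller graph in the same $\backsim_w$-class), extract monomorphisms in each direction from Theorem~\ref{thm:hall}, and then use a vertex- and edge-count argument to upgrade these to isomorphisms. The only cosmetic difference is that the paper notes $|V_G|=|V_H|$ at the outset directly from the definition of R-core, whereas you derive it from the existence of the two monomorphisms.
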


\begin{proof}
  Because both $G$ and $H$ are R-cores, we know that $|V_G|=|V_H|$.

  Consider relations $R_1$ and $R_2$ such that $G\muls R_1=H$ and $H\muls
  R_2=G$.  Applying Lemma \ref{lem:nohall} we know that $R_1$ satisfies the
  Hall condition.  Otherwise there would be a graph $G'$ with $|V_G'|<
  |V_G|$ so that $G'$ is relationally equivalent to both $G$ and $H$
  contradicting the fact that $G$ and $H$ are R-cores.  Similarly, we can
  show that $R_2$ also satisfies the Hall condition.

  From Theorem \ref{thm:hall} we know that there is a monomorphism $f$ from
  $G$ to $H$, and monomorphism $g$ from $H$ to $G$.  It follows that number
  of edges of $G$ is not larger than the number of edges of $H$ and
  vice versa.  Because $G$ and $H$ have the same number of edges and
  same number of vertices, $g$ and $h$ must be isomorphisms.
\end{proof}

It thus makes sense to define a construction analogous to the core of a graph. 
\begin{defn}
  $H$ is an \emph{R-core of graph $G$} if $H$ is an R-core and $H\backsim_w
  G$. 
\end{defn}

All R-cores of graph $G$ are isomorphic as an immediate consequence of 
Prop.~\ref{pro:iso}. We denote the  (up to isomorphism) unique R-core 
of graph $G$ by $G_{\text{R-core}}$. 

\begin{lemma}
  $G_{\text{R-core}}$ is isomorphic to a (not necessarily induced) subgraph
  of $G$.
\end{lemma}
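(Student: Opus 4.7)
The plan is to take $H=G_{\text{R-core}}$ and exhibit the required embedding as a monomorphism arising from Hall's theorem. By definition of weak relational equivalence there are relations $R_1\subseteq V_G\times V_H$ and $R_2\subseteq V_H\times V_G$ with $G\muls R_1=H$ and $H\muls R_2=G$. My goal is to produce a monomorphism $f:H\to G$; the image of such an $f$ is then a subgraph of $G$ isomorphic to $H$ (not necessarily induced, because $f$ preserves edges but need not reflect non-edges). The natural candidate for $f$ is an injective homomorphism sitting inside $R_2$ provided by Theorem \ref{thm:hall}, so everything reduces to verifying the Hall condition for $R_2$.

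First I would suppose for contradiction that $R_2$ fails the Hall condition. Then Lemma \ref{lem:nohall} applied to $H\muls R_2=G$ yields a decomposition $R_2=S_1\circ S_2$ together with a graph $H':=H\muls S_1$ satisfying $|V_{H'}|<|V_H|$. By the composition law (Lemma \ref{lem:compo}) we still have $H'\muls S_2=H\muls(S_1\circ S_2)=H\muls R_2=G$. Moreover, composing on the other side, $G\muls(R_1\circ S_1)=(G\muls R_1)\muls S_1=H\muls S_1=H'$. So we have relations in both directions between $G$ and $H'$, which gives $G\backsim_w H'$, and by transitivity $H\backsim_w H'$. But $|V_{H'}|<|V_H|$ contradicts the fact that $H$ is an R-core, i.e.\ smallest (in vertex count) in its equivalence class of $\backsim_w$. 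Hence $R_2$ must satisfy the Hall condition.

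Once Hall's condition is established for $R_2$, I would invoke Theorem \ref{thm:hall} directly to produce a monomorphism $f:H\to G$ contained in $R_2$. As $f$ is an injective homomorphism, the induced map onto its image $f(V_H)\subseteq V_G$ is an isomorphism from $H$ onto the subgraph $(f(V_H),f^1(E_H))$ of $G$, which is what we want.

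The only real obstacle is the Hall-condition verification, and the point to be careful about is that the contradictory smaller graph $H'$ must be shown to be weakly relationally equivalent to the \emph{original} $G$ (not merely to $H$), so that the R-core property of $H$ applies; this is why I need to compose $R_1$ with $S_1$ to produce the relation $G\muls(R_1\circ S_1)=H'$ in addition to the relation $H'\muls S_2=G$ handed to us by Lemma \ref{lem:nohall}. Everything else is a direct application of already-proved results.
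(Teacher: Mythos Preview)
Your proposal is correct and follows essentially the same route as the paper: take a relation from $G_{\text{R-core}}$ to $G$, verify the Hall condition by the contradiction argument of Corollary~\ref{cor:nohall2} (which you spell out explicitly, including the composition $R_1\circ S_1$ needed to close the loop back to $G$), and then invoke Theorem~\ref{thm:hall} to extract the monomorphism. The paper's proof is terser---it simply writes ``by the same argument as in Corollary~\ref{cor:nohall2}''---but the substance is identical.
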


\begin{proof}
  Take any relation $R$ such that $G_{\text{R-core}}\muls R=G$. By the same
  argument as in Corollary \ref{cor:nohall2}, there is a monomorphism
  $f:G_{\text{R-core}}\rightarrow G$ contained in $R$. Consider the image
  of $f$ on $G$.
\end{proof}

\begin{figure}
  \begin{center}
    \includegraphics[width=0.4\textwidth]{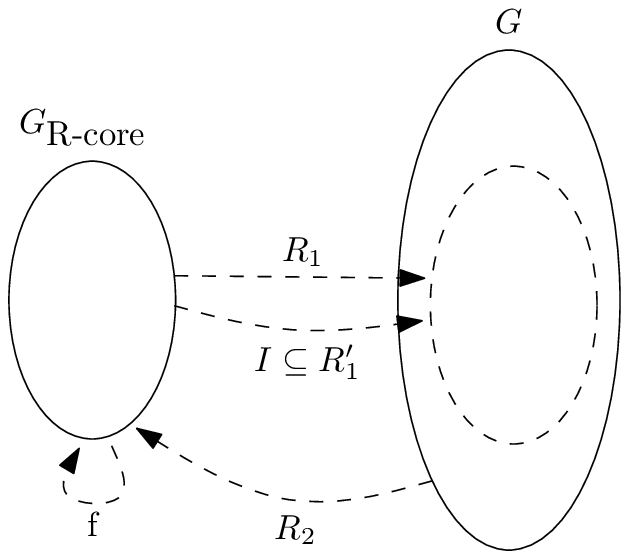}
  \end{center}
\caption{Construction of an embedding from $G_{\text{R-core}}$ to $G$.}
\end{figure}

\begin{thm}
  \label{thm:Rcore}
  $G_{\text{R-core}}$ is isomorphic to an induced subgraph of $G$.
\end{thm}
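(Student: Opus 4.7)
The plan is to upgrade the previous lemma (which yields an embedding $f\colon G_{\text{R-core}}\to G$ whose image is merely a subgraph) by exploiting that weak relational equivalence is symmetric. Write $H=G_{\text{R-core}}$. Because $H\backsim_w G$, there exist relations $R$ with $H\muls R=G$ and $S$ with $G\muls S=H$. Repeating the argument used in the proof of Proposition~\ref{pro:iso} (via Lemma~\ref{lem:nohall}), both $R$ and $S$ must satisfy the Hall condition; otherwise a smaller representative of the $\backsim_w$-class would exist, contradicting $H$ being an R-core. By Theorem~\ref{thm:hall}, $R$ contains a monomorphism $f\colon H\to G$, and $S$ contains a monomorphism $g\colon G\to H$.

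The key observation is that $g\circ f\colon H\to H$ is injective on vertices and preserves edges, so it is an endomorphism of $H$ that is bijective on the finite vertex set $V_H$. An edge-preserving bijection on a finite graph is automatically edge-reflecting: it sends the $|E_H|$ edges of $H$ injectively into $E_H$, and by finiteness this injection is a bijection on edges as well. Hence $g\circ f$ is an \emph{automorphism} of $H$.

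Now I claim that $f(V_H)$ induces a subgraph of $G$ isomorphic to $H$. Edge preservation by $f$ is immediate from $f$ being a homomorphism. For the reverse implication, suppose $u,v\in V_H$ and $(f(u),f(v))\in E_G$. Applying the homomorphism $g$ gives $\big((g\circ f)(u),(g\circ f)(v)\big)\in E_H$, and since $g\circ f$ is an automorphism of $H$ we may apply $(g\circ f)^{-1}$ to conclude $(u,v)\in E_H$. Thus $f$ is an isomorphism from $H$ onto the induced subgraph $G[f(V_H)]$.

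The main obstacle in this argument is the step showing that $g\circ f$ is a genuine automorphism rather than merely an edge-preserving bijection; without this, one cannot transfer a putative edge $(f(u),f(v))\in E_G$ back to an edge of $H$. Everything else is a routine assembly of Hall's theorem (already translated into our language by Theorem~\ref{thm:hall}) with the symmetric decomposition of the relations witnessing $H\backsim_w G$.
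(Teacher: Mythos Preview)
There is a genuine gap. Your claim that $S$ satisfies the Hall condition, and hence contains a monomorphism $g\colon G\to H$, is false whenever $|V_G|>|V_H|$ --- which is exactly the interesting case. Indeed, the Hall condition for $S\subseteq V_G\times V_H$ asks that $|T|\le |S(T)|$ for every $T\subseteq V_G$; taking $T=V_G$ forces $|V_G|\le |V_H|$. The argument you invoke from Proposition~\ref{pro:iso} works there only because \emph{both} graphs are R-cores of the same size. Here, if $S$ violates Hall, Lemma~\ref{lem:nohall} produces a graph $G'$ with $|V_{G'}|<|V_G|$, not $|V_{G'}|<|V_H|$, so no contradiction with $H$ being an R-core arises.

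The paper avoids this trap by never seeking a monomorphism out of $G$. Instead it composes the two relations to get $R_1\circ R_2$ from $H$ to itself, which \emph{does} satisfy Hall by Corollary~\ref{cor:nohall2}, extracts a permutation $f$ of $V_H$, and iterates so that some power $R^n$ contains the identity. Writing $R^n=R_1'\circ R_2$ with $R_1'=R^{n-1}\circ R_1$, one then picks for each $x\in V_H$ an intermediate vertex $I(x)\in V_G$ with $I(x)\in R_1'(x)$ and $x\in R_2(I(x))$. The injectivity of $I$ is not automatic and requires the thinness of the R-core (via Theorem~\ref{thm:equi}); the two-sided containment $I\subseteq R_1'$, $I^{-1}\subseteq R_2$ is what makes $I$ an isomorphism onto an \emph{induced} subgraph. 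Your idea of pulling an edge $(f(u),f(v))$ back through a map $g$ is the right instinct, but $g$ can only be a homomorphism (Lemma~\ref{reho}), not a monomorphism, and then $g\circ f$ need not be injective --- so the automorphism step collapses.
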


\begin{proof}
  Fix $R_1$ and $R_2$ such that $G_{\text{R-core}}\muls R_1=G$ and
  $G\muls R_2=G_{\text{R-core}}$.

  $R=R_1\circ R_2$ is a relation such that $G_{\text{R-core}}\muls
  R=G_{\text{R-core}}$. By Corollary \ref{cor:nohall2}, $R$ contains a
  monomorphism $f:G_{\text{R-core}}\rightarrow G_{\text{R-core}}$.  Because
  such a monomorphism is a permutation, there exists $n$ such that $f^n$,
  the $n$-fold composition of $f$ with itself, is the
  identity.

  Put $R_1'=R^{n-1}\circ R_1$. Because $R^{n}$ contains the identity and 
  $R^{n}=R_1'\circ R_2$, it follows that for every 
  $x\in V_{G_{\text{R-core}}}$, there is a vertex 
  $I(x)\in V_G$ such that $I(x)\in R'_1(x)$ and $x\in R_2(I(x))$.

  We show that for two vertices $x\neq y$, we have $I(x)\neq I(y)$ and 
  thus both $I$ and $I^{-1}$ are monomorphisms.  Assume, that is not the
  case, i.e., that there are two vertices $x\neq y$ such that $I(x)=I(y)$.
  Consider an arbitrary vertex $z$ in the neighborhood of $x$. It follows 
  that $I(z)$ must be in the neighborhood of $I(x)$ and consequently 
  $z$ is in the neighborhood of $y$. Thus the neighborhoods of $x$ and $y$ 
  are the same.  By Theorem \ref{thm:equi}, however, we know that the 
  R-core is a thin graph (because weak relational equivalence is coarser 
  than strong relational equivalence), a contradiction.

  Finally observe that $I$ is an embedding from $G_{\text{R-core}}$ to
  $G$. For every edge $(x,y)\in E_{G_{\text{R-core}}}$ we also have edge
  $(I(x),I(y))\in E_G$ because $I$ is contained in relation $R'_1$.
  Similarly because $I^{-1}$ is contained in relation $R_2'$, every edge
  $(I(x),I(y))\in E_G$ corresponds to an edge $(x,y)\in
  E_{G_{\text{R-core}}}$.
\end{proof}

We close the section with an algorithm computing the R-core of a graph.  In
contrast to graph cores, where the computation is known to be NP-complete,
there is a simple polynomial algorithm for R-cores.

Observe that the R-core of a graph containing isolated vertices is isomorphic
to the disjoint union of the R-core of the same graph with the isolated
vertices removed and a single isolated vertex.  The R-core of a graph without
isolated vertices can be computed by Algorithm \ref{alg:Rcore}.

\begin{algorithm}[htb]
\caption{The R-core of a graph}
\label{alg:Rcore}
\begin{algorithmic}[1]
  \REQUIRE ~~\newline
  Graph $G$ with loops allowed and without isolated vertices, 
  vertex set denoted by $V$, neighborhoods $N_G(i)$, $i\in V$.
  \FOR {$i\in V$}
  \STATE $W(i)= \emptyset$
  \STATE found = FALSE
  \FOR {$j\in V\setminus \{i\}$}
  \IF {$N(j)\subseteq N(i)$}
  \STATE  $W(i):= W(i)\cup N(j)$
  \ENDIF
  \IF {$N(i)\subseteq N(j)$}
  \STATE found = TRUE
  \ENDIF
  \ENDFOR
  \IF {$W(i)=N(i)$ \AND found}
  \STATE delete $i$ from $V$
  \STATE $N(i)= \emptyset$
  \ENDIF
  \ENDFOR
\RETURN  The R-core $G[V]$ of $G$.

\end{algorithmic}
\end{algorithm}

The algorithm removes all vertices $v\in G$ such that (1) the neighborhood
of $v$ is union of neighborhood of some other vertices $v_1,v_2,\leq v_n$
and (2) there is vertex $u$ such that $N_G(v)\subseteq N_G(u)$.

It is easy to see that the resulting graph $H$ is relationally equivalent
to $G$. Condition (1) ensures the existence of a relation $R_1$ such that
$H\muls R_1=G$, while the condition (2) ensure the existence of a relation
$R_2$ such that $G\muls R_2=H$.

We need to show that $H$ is isomorphic to $G_{\text{R-core}}$. By Theorem
\ref{thm:Rcore} we can assume that $G_{\text{R-core}}$ is an induced
subgraph of $H$ that is constructed as an induced subgraph of $G$.

We also know that there are relations $R_1$ and $R_2$ such that
$G_{\text{R-core}}\muls R_1=H$ and $G\muls R_2=G_{\text{R-core}}$.  By the
same argument as in the proof of Theorem \ref{thm:Rcore} we can assume both
$R_1$ and $R_2$ to contain an (restriction of) identity.

Now assume that there is a vertex $v\in V_H\setminus
V_{G_{\text{R-core}}}$. We can put $u=R_2(v)$ and because $R_2$ contains an
identity we have $N_G(v)\subseteq N_G(u)$.  We can also put
$\{v_1,v_2\ldots v_n\}$ to be set of all vertices such that $v\in
R_1(v_i)$.  It follows that the neighborhood of $v$ is the union of
neighborhoods of $v_1,v_2,\ldots v_n$ and consequently we have $v\notin
V_H$, a contradiction.

\section{The Partial Order $\mathrm{Rel}(G,H)$}
\label{sect:poset}

\subsection{Basic Properties}

For fixed graphs $G$ and $H$ we consider partial order $\mathrm{Rel}(G,H)$.
The vertices of this partial order are all relations $R$ such
that $G\muls R=H$. We put $R_1\leq R_2$ if and only if $R_1\subseteq R_2$.

This definition is motivated by Hom-complexes, see \cite{Matousek:03}.  In
this section we show the basic properties of this partial order and
concentrate on minimal elements in the special case of $\mathrm{Rel}(G,G)$.

\begin{pro}
  Suppose $G\muls R'=H$, $G\muls R''=H$ and $R'\subseteq R''$, then any
  relation $R$ with $R'\subseteq R\subseteq R''$ also satisfies $G\muls
  R=H$.
\end{pro}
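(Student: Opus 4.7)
The plan is to exploit the transparent monotonicity of the operation $\muls$ in its relational argument and then sandwich $G\muls R$ between two copies of $H$. No clever construction is needed; everything comes straight from Definition~\ref{def:muls}.

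First I would verify that $G\muls R$ is actually defined, i.e.\ that $R$ satisfies the hypothesis of Definition~\ref{def:muls} with target set $B=V_H$. Since $G\muls R'=H$ the relation $R'\subseteq V_G\times V_H$ has full image on $V_H$ (every $b\in V_H$ has some pre-image under $R'$). Because $R\supseteq R'$, $R$ inherits this property, so $G\muls R$ is a well-defined graph on vertex set $V_H$, the same vertex set as $H$.

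Next I would record the monotonicity statement: if $S_1\subseteq S_2\subseteq V_G\times B$ are two relations (both with full image on $B$), then $E_{G\muls S_1}\subseteq E_{G\muls S_2}$. This is immediate from the defining formula
\begin{equation*}
E_{G\muls S} = \bigl\{(u,v)\in B\times B \,\bigm|\, \exists (x,y)\in E_G \text{ with } (x,u),(y,v)\in S\bigr\},
\end{equation*}
since enlarging $S$ can only add more witnesses $(x,u),(y,v)$.

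Finally, applying this monotonicity to the chain $R'\subseteq R\subseteq R''$ gives
\begin{equation*}
E_H \;=\; E_{G\muls R'} \;\subseteq\; E_{G\muls R} \;\subseteq\; E_{G\muls R''} \;=\; E_H,
\end{equation*}
so $E_{G\muls R}=E_H$. Combined with the already established equality of vertex sets, this yields $G\muls R=H$. There is no real obstacle here; the only thing to be mildly careful about is confirming that $R$ retains the full-image condition so that the intermediate term $G\muls R$ is well-defined before we start comparing edge sets.
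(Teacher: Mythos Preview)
Your proof is correct and follows the same monotonicity-and-sandwich argument as the paper: from $R'\subseteq R\subseteq R''$ one gets $G\muls R'\subseteq G\muls R\subseteq G\muls R''$, and since both ends equal $H$ so does the middle. Your version is slightly more detailed in checking that $R$ inherits the full-image condition from $R'$, a point the paper leaves implicit.
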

\begin{proof}
  From $R'\subseteq R\subseteq R''$ we conclude $G\muls R'\subseteq G\muls
  R\subseteq G\muls R''$. Hence $G\muls R'=G\muls R''$ implies $G\muls
  R=H$.  
\end{proof}

Hence it is possible to describe the partial order $\mathrm{Rel}(G,H)$ by
listing minimal and maximal solutions $R$ of $G\muls R=H$ w.r.t.\ set
inclusion.

For example, if $G$ is $P_3$ with vertices $v_0,v_1,v_2,v_3$ and $H$ is
$P_1$ with vertices $x_0,x_1$, it is easily seen that
$R''=\{(v_0,x_0),(v_2,x_0),(v_1,x_1),(v_3,x_1)\}$ is a maximal solution of
$G\muls R=H$ and $R'=\{(v_0,x_0),(v_1,x_1)\}$ is a minimal solution,
because $R'\subset R''$, then all the relations $R$ with $R'\subseteq
R\subseteq R''$ satisfy $G\muls R=H$. We note that minimal and maximal
solutions need not be unique.

\subsection{Solutions of $G\muls R = G$}

For simplicity, we say that a relation $R$ is an \emph{automorphism} of $G$
if it is of the form $R=\{(x,f(x))|x\in V_G\}$ and $f:V_G\to V_G$
is an automorphism of $G$.

We shall see that conditions related to thinness again play a major role in
this context. Recall that $G$ is thin if no two vertices have the same
neighborhood, i.e., $N_G(x)=N_G(y)$ implies $x=y$. Here we need an even
stronger condition:
\begin{defn}
  A graph $G$ satisfies \emph{condition N} if $N_G(x)\subseteq N_G(y)$
  implies $x= y$.
\end{defn}
In particular, graph satisfying condition N is thin.

\begin{pro}
  For a given graph $G$, the set $\mathrm{Rel}(G,G)$ of all relations
  satisfying $G\muls R = G$ forms a monoid.
\end{pro}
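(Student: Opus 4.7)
The plan is to verify the three monoid axioms: closure under composition, associativity, and the existence of an identity element, all for the operation of relational composition $\circ$ on the set $\mathrm{Rel}(G,G)$.

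For closure, I would take two arbitrary relations $R, S \in \mathrm{Rel}(G,G)$, meaning $G \muls R = G$ and $G \muls S = G$. Applying the composition law (Lemma \ref{lem:compo}), I compute
\begin{equation*}
  G \muls (R \circ S) = (G \muls R) \muls S = G \muls S = G,
\end{equation*}
so $R \circ S$ also lies in $\mathrm{Rel}(G,G)$.

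Associativity is inherited directly from the associativity of composition of binary relations, which was already recalled at the start of Section~\ref{sect:basic}; no further work is required here.

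For the identity, the candidate is $I_G = \{(x,x) \mid x \in V_G\}$. I would observe that $G \muls I_G = I_G^+ \circ G \circ I_G = G$, so $I_G \in \mathrm{Rel}(G,G)$, and for any relation $R \subseteq V_G \times V_G$ we have $I_G \circ R = R \circ I_G = R$. No step in this argument presents any real obstacle; the statement is essentially an immediate corollary of Lemma~\ref{lem:compo} together with standard properties of relation composition.
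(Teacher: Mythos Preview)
Your proof is correct and follows essentially the same route as the paper: closure via Lemma~\ref{lem:compo}, associativity inherited from relational composition, and $I_G$ as the two-sided identity. If anything, your version is slightly more complete, since you explicitly check $G\muls I_G=G$ and state associativity, both of which the paper leaves implicit.
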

\begin{proof}
  Firstly, because $G$ is a finite graph, the set $\mathrm{Rel}(G,G)$ is also
  finite.  Furthermore, $R,S\in \mathrm{Rel}(G,G)$ implies $G\muls R=G$ and
  $G\muls S=G$ and thus $G\muls (R\circ S)=G$, so that $R\circ S\in
  \mathrm{Rel}(G,G)$. Finally, the identity relation $I_G$ is a left and right
  identity for relational composition: $I_G\circ R=R\circ I_G=R$.  
\end{proof}

A relation $R\subset V_G\times V_G$ can be interpreted as a directed graph
$\vec R$ with vertex set $V_G$ and a directed edge $u\rightarrow v$ if and
only if $(u,v)\in R$. Note that $\vec R$ may have loops.  We say that $v\in
V_G$ is \emph{recurrent} if and only if there exists a walk (of length at
least 1) from $v$ to itself. Let $S_G$ be the set of all the recurrent
vertices. Furthermore, we define an equivalence relation $\xi$ on $S_G$ by
setting $(u,v)\in\xi$ if there is a walk in $\vec R$ from $u$ to $v$
and vice versa. The equivalence classes w.r.t.\ $\xi$ are denoted by
$\vec R/\xi=\{D_1,D_2,\cdots,D_m\}$. We furthermore define a binary
relation $\geq$ over $\vec R/\xi$ as follows: if there is a walk from a
vertex $u$ in $D_i$ to a vertex $v$ in $D_j$, then we say $u\geq v$. It is
easily seen that $\geq$ is reflexive, antisymmetric, and transitive, hence
$(\vec R/\xi,\geq)$ is a partially ordered set.  W.l.o.g.\ we can assume
$\{D_1, D_2,\dots,D_s\}$ are the maximal elements w.r.t.\ $\geq$. Now let
$G_r=G[D_1\cup\cdots\cup D_s]$ be the subgraph of $G$ induced by these
maximal elements. 

In the following we write $R^l$ for the $l$-fold
composition of $R$ with itself.

\begin{lemma}\label{lem:aut1}
For arbitrary $x\in V_G$, there exist $l\in \mathbb{N}$ and a
recurrent vertex $v$ such that $(v,x)\in R^l$.
\end{lemma}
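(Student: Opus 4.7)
The plan is to exploit the fact that $G\muls R = G$ forces the image of $R$ to be full, then run a backward walk in $\vec R$ and invoke finiteness of $V_G$ to locate a cycle.

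First I would note that by Definition~\ref{def:muls} applied to $G\muls R = G$, every vertex of $G$ must occur as a second coordinate of some pair of $R$; equivalently, $R^{-1}(x)\neq\emptyset$ for every $x\in V_G$. In the directed graph $\vec R$ this says that every vertex has at least one in-neighbor, so starting from the given $x$ we can construct an infinite backward sequence $x_0=x,\,x_1,\,x_2,\ldots$ in $V_G$ with $(x_{i+1},x_i)\in R$ for every $i\geq 0$.

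Next, because $V_G$ is finite, this sequence must repeat: there exist indices $0\leq i<j$ with $x_i=x_j$. The portion $x_j \to x_{j-1}\to\cdots\to x_i$ of the sequence is a directed walk in $\vec R$ of length $j-i\geq 1$, beginning and ending at the same vertex $x_i$. Hence $x_i$ is recurrent in the sense defined just before the lemma.

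Setting $v:=x_i$ and considering the initial segment $x_i\to x_{i-1}\to\cdots\to x_0=x$ gives a directed walk of length $i$ in $\vec R$, which translates to $(v,x)\in R^{i}$. If $i=0$ we instead take $l:=j-i\geq 1$, noting that $x=v$ is then itself recurrent and $(v,x)=(x,x)\in R^{l}$ by the closed walk just exhibited. In either case we obtain a recurrent vertex $v$ and a positive integer $l$ with $(v,x)\in R^{l}$, as required. The only subtlety is the direction convention in $\vec R$ (edge $u\to v$ means $(u,v)\in R$, so backward walks in $R$ correspond to forward walks whose composition lies in $R^l$); everything else is a straightforward pigeonhole argument, so no substantial obstacle arises.
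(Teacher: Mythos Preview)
Your proof is correct and follows essentially the same pigeonhole argument as the paper: construct a backward sequence $x_0=x$, $x_i\in R^{-1}(x_{i-1})$, use finiteness of $V_G$ to find a repetition, and take the repeated vertex as the recurrent $v$. You are in fact slightly more careful than the paper, both in justifying $R^{-1}(x)\neq\emptyset$ from Definition~\ref{def:muls} and in explicitly handling the boundary case where the first repetition occurs at the initial vertex $x$ itself.
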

\begin{proof}
Set $x_0=x$ and choose $x_i\in R^{-1}(x_{i-1})$ for all $i\ge1$. Since
$|V_G|<\infty$, there are indices  $j,k\in \mathbb{N}$, $j<k$,
$x_j=x_k$. Then $x_j$ is recurrent vertex. The lemma follows by setting
$l=j$ and $v=x_i$.
\end{proof}

\begin{lemma}\label{lem:aut2}
For every $v\in V_{G_r}$, $R^{-1}(v)\subseteq V_{G_r}$.
\end{lemma}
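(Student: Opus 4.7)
The plan is to unpack what membership in $V_{G_r}$ means and then trace the walks in $\vec R$ carefully. Let $v\in V_{G_r}$, so $v\in D_i$ for some maximal class $D_i$ w.r.t.\ $\geq$, and let $u\in R^{-1}(v)$, meaning $(u,v)\in R$, i.e., there is a directed edge $u\to v$ in $\vec R$. The goal is to show that $u$ is itself recurrent and lies in the same class $D_i$, hence in $V_{G_r}$.

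First I would invoke Lemma \ref{lem:aut1} applied to $u$: it yields a recurrent vertex $w$ and an integer $l$ with $(w,u)\in R^l$, i.e., a walk $w\to\cdots\to u$ of length $l$ in $\vec R$. Concatenating with the edge $u\to v$ gives a walk $w\to\cdots\to u\to v$ of length $l+1$. Writing $D_j$ for the $\xi$-class of the recurrent vertex $w$, this walk witnesses $D_j\geq D_i$. Because $D_i$ is maximal and $\geq$ is antisymmetric, we must have $D_j=D_i$, so in particular $w$ and $v$ lie in the same strongly connected component.

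From $w,v\in D_i$ and the definition of $\xi$, there exists a walk $v\to\cdots\to w$ in $\vec R$. Concatenating with the walk $w\to\cdots\to u$ supplied above yields a walk $v\to\cdots\to u$. Combining this with the edge $u\to v$ produces a closed walk $u\to v\to\cdots\to u$ of positive length at $u$, which shows that $u$ is recurrent, i.e., $u\in S_G$. Moreover, both directions $u\to v$ and $v\to\cdots\to u$ exist, so $(u,v)\in\xi$, and hence $u$ lies in the same equivalence class as $v$, namely $D_i$. Therefore $u\in D_i\subseteq V_{G_r}$, as desired.

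The only subtle point is keeping track of the direction of walks in $\vec R$ versus the notation $R^{-1}(v)$; once one notes that $(u,v)\in R$ corresponds to the forward edge $u\to v$, the maximality of $D_i$ together with Lemma \ref{lem:aut1} forces everything to collapse into a single strongly connected component, and the conclusion is immediate.
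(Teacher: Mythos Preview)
Your proof is correct and follows essentially the same approach as the paper: apply Lemma~\ref{lem:aut1} to the preimage vertex to obtain a recurrent predecessor $w$, use maximality of the $\xi$-class of $v$ to force $w$ into that same class, and then close up a walk through the preimage vertex to show it is recurrent and in the same class. The only cosmetic difference is that the paper phrases the first half as a proof by contradiction (assume the preimage is not recurrent, derive that it is), whereas you argue directly; the underlying chain of walks is identical.
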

\begin{proof}
  Suppose $x\in R^{-1}(v)$ is not recurrent. Lemma \ref{lem:aut1} implies
  that there is $l\in \mathbb{N}$ and a recurrent vertex $w$ such that
  $(w,x)\in R^l$. Hence the definitions of $E$ and $\geq$ imply $[w]\geq
  [v]$, where $[v]$ denotes the equivalent class (w.r.t.\ $E$) containing
  the vertex $v$.  Since $[v]$ is maximal w.r.t.\ $\geq$, we have
  $[v]=[w]$. Consequently, there exists an index $k\in \mathbb{N}$ such
  that $(v,w)\in R^k$. On the other hand, we have $(x,x)=(x,v)\circ
  (v,w)\circ (w,x)\in R^{k+l+1}$. Thus, $x$ is recurrent, a contradiction.

  Therefore, every vertex $x\in R^{-1}(v)$ is recurrent. Hence $[x]\geq
  [v]$ together with the maximality of $[v]$ gives $[x]=[v]$, and thus
  $x\in V_H$. 
\end{proof}

\begin{lemma}\label{lem:aut3}
  For every $x\in V_G$, there is $l\in \mathbb{N}$ such that, for arbitrary
  $i\geq l$, there exists $u\in V_{G_r}$ satisfying $(u,x)\in R^i$.
\end{lemma}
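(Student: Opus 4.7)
The plan is to argue in two stages. First, show that $x$ can be reached (in the forward direction under some power of $R$) from at least one vertex $u \in V_{G_r}$. Second, use the backward-closure of $V_{G_r}$ under $R^{-1}$ established in Lemma \ref{lem:aut2}, together with the full-image property of $R$, to extend this to every sufficiently large power of $R$.

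For the first stage, I would apply Lemma \ref{lem:aut1} to obtain a recurrent vertex $v_0$ and an integer $l_0$ with $(v_0, x) \in R^{l_0}$. If the $\xi$-class $[v_0]$ happens to be maximal with respect to $\geq$, then $v_0 \in V_{G_r}$ and we may set $u = v_0$. Otherwise, there is a class $[w] > [v_0]$, so some walk in $\vec R$ runs from a vertex of $[w]$ into a vertex of $[v_0]$. Using that $[w]$ and $[v_0]$ are each strongly connected (as $\xi$-classes on $S_G$), this walk can be started at any prescribed $w \in [w]$ and prolonged within $[v_0]$ to terminate at $v_0$, yielding $(w, v_0) \in R^j$ for some $j \geq 1$ and hence $(w, x) \in R^{j + l_0}$. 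Replacing $v_0$ by $w$ strictly increases the $\geq$-class; since $\vec R/\xi$ is finite, after finitely many iterations I obtain the desired $u \in V_{G_r}$ and $L \in \mathbb{N}$ with $(u, x) \in R^L$.

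For the second stage, set $l = L$. By Definition \ref{def:muls}, the hypothesis $G\muls R = G$ forces $R^{-1}(b) \neq \emptyset$ for every $b \in V_G$, i.e.\ $R$ has full image. Combined with Lemma \ref{lem:aut2}, which gives $R^{-1}(V_{G_r}) \subseteq V_{G_r}$, an easy induction shows that $R^{-j}(u)$ is a nonempty subset of $V_{G_r}$ for every $j \geq 0$. Given any $i \geq l$, pick $u_i \in R^{-(i-L)}(u)$; then $u_i \in V_{G_r}$, and concatenating the witnessing backward walk of length $i - L$ with the $R^L$-walk from $u$ to $x$ gives $(u_i, x) \in R^i$, as desired.

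The main obstacle is the first stage. Lemma \ref{lem:aut1} only delivers \emph{some} recurrent vertex reaching $x$, and there is no a priori reason for it to lie in a maximal SCC; the work lies in using the finiteness of the quotient $(\vec R/\xi, \geq)$ together with strong connectedness of the classes to lift that recurrent vertex upward to $V_{G_r}$ without losing the walk to $x$. Once this is in hand, the second stage is an essentially bookkeeping consequence of Lemma \ref{lem:aut2} and the full-image condition in Definition \ref{def:muls}.
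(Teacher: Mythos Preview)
Your proposal is correct and follows essentially the same approach as the paper's proof: the paper reduces the claim (via Lemmas~\ref{lem:aut1} and~\ref{lem:aut2}) to showing that every recurrent vertex can be reached from some vertex of $V_{G_r}$, and then appeals to finiteness of $V_G$ for this last step. Your two-stage argument is precisely this reduction spelled out in detail, with the climbing argument through the poset $(\vec R/\xi,\geq)$ making explicit what the paper leaves to ``follows easily from the finiteness of $V_G$.''
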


\begin{proof}
  From Lemma \ref{lem:aut1} and Lemma \ref{lem:aut2} we conclude that it is
  sufficient to show that for an arbitrary recurrent vertex $v$ there is a
  $k\in \mathbb{N}$ and $w\in V_{G_r}$ such that $(w,v)\in R^k$. The lemma
  now follows easily from the finiteness of $V_G$. 
\end{proof}

From these three lemmata we can deduce 
\begin{thm} \label{thm:auto1}
All solutions of $G\muls R=G$ are
automorphisms if and only if $G$ has property N.
\end{thm}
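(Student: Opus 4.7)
I would split the proof into the two directions.

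For the only-if direction I argue contrapositively. Suppose $G$ lacks property N, so there exist distinct $x,y\in V_G$ with $N_G(x)\subseteq N_G(y)$. Set $R=I_G\cup\{(x,y)\}$; this is non-functional, hence not an automorphism, yet $G\muls R=G$. Indeed $R^{-1}(v)=\{v\}$ for $v\neq y$ and $R^{-1}(y)=\{x,y\}$. For $(u,v)$ with at most one coordinate equal to $y$, the equivalence $(u,v)\in E_{G\muls R}\Leftrightarrow (u,v)\in E_G$ follows directly from $N_G(x)\subseteq N_G(y)$. In the remaining case $u=v=y$, any contributing $(a,b)\in E_G$ with $a,b\in\{x,y\}$ forces $(y,y)\in E_G$: for instance a loop at $x$ yields $x\in N_G(y)$, so $(x,y)\in E_G$, so $y\in N_G(x)\subseteq N_G(y)$, a loop at $y$.

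For the if direction, assume $G$ has property N and $G\muls R=G$. A pullback-pushforward of edges at $y$ yields the identity
\[
  N_G(y) = R\bigl(N_G(R^{-1}(y))\bigr),\qquad y\in V_G,
\]
which by induction iterates to $N_G(y)=R^n\bigl(N_G(R^{-n}(y))\bigr)$ for every $n\ge 1$. Two consequences, both using property N, are central: (i) if $R^{-n}(y_1)=R^{-n}(y_2)$ then $N_G(y_1)=N_G(y_2)$, forcing $y_1=y_2$; and (ii) if $y_1,y_2\in R^n(x)$ with $R^{-n}(y_1)=\{x\}$, then $N_G(y_1)=R^n(N_G(x))\subseteq N_G(y_2)$, forcing $y_1=y_2$.

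I would then show $V_G=V_{G_r}$, whence $R$ is a bijection. The maximal strongly connected components $D_1,\dots,D_s$ of $\vec R|_{V_{G_r}}$ admit for each $i$ a period $p_i$ and a cyclic-class decomposition. Choosing $n$ a large common multiple of the $p_i$, Lemma \ref{lem:aut2} gives $R^{-n}(y)\subseteq V_{G_r}$ for $y\in V_{G_r}$, and the standard period decomposition (strong connectivity plus primitivity of $R^{p_i}$ on each cyclic class) yields $R^{-n}(y)=C_{i,j}$, the cyclic class containing $y$. Consequence (i) forces $|C_{i,j}|=1$, so each $D_i$ is a simple directed cycle, $R|_{V_{G_r}}$ is a disjoint union of cyclic permutations, and $R^{-n}(u)=\{u\}$ for every $u\in V_{G_r}$. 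Now if $x\in V_G\setminus V_{G_r}$, Lemma \ref{lem:aut3} gives $u\in V_{G_r}$ with $(u,x)\in R^n$, so $u,x\in R^n(u)$ with $R^{-n}(u)=\{u\}$; consequence (ii) yields $N_G(u)\subseteq N_G(x)$, and property N forces $u=x$, contradicting $u\in V_{G_r}$ and $x\notin V_{G_r}$. Hence $V_G=V_{G_r}$ and $R$ is a bijection; by Lemma \ref{reho}, $R$ is a self-homomorphism of $G$, which being bijective on a finite graph is an automorphism.

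The main obstacle is the period-decomposition analysis within $V_{G_r}$, combining Lemmas \ref{lem:aut1}--\ref{lem:aut3} with consequence (i) to force all cyclic classes to be singletons; this is where the iterated identity meets the structural information about $\vec R$.
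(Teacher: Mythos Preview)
Your proof is correct and follows the same overall architecture as the paper's: the ``only if'' direction is identical (the paper also takes $R=I_G\cup\{(x,y)\}$), and for the ``if'' direction both arguments first show that $R$ restricted to $V_{G_r}$ has a power equal to the identity, then use Lemma~\ref{lem:aut3} together with property~N to conclude $V_G=V_{G_r}$ and hence that $R$ is an automorphism. Both rely, explicitly in your case and implicitly in the paper's, on the identity $N_G(y)=R^n(N_G(R^{-n}(y)))$.

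The genuine difference lies in how the ``identity power on $V_{G_r}$'' is obtained. You invoke the period/cyclic-class decomposition of each maximal strongly connected component $D_i$ and use your consequence~(i) to force every cyclic class to be a singleton, so that $R|_{V_{G_r}}$ is a disjoint union of directed cycles. The paper avoids this structural machinery: it takes $s$ to be the least common multiple of one return time per vertex of $V_{G_r}$, sets $Q:=R^s\cap(V_{G_r}\times V_{G_r})\supseteq I_{G_r}$, lets $Q^n$ stabilise (so $Q^{2n}=Q^n$), and argues by contradiction. If $(u,v)\in Q^n$ with $u\neq v$, property~N gives $N_G(u)\nsubseteq N_G(v)$, which via the same neighbourhood identity yields a vertex $w$ with $(w,u)\in Q^n$ but $(w,v)\notin Q^n$; composing gives $(w,v)\in Q^{2n}=Q^n$, a contradiction. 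Thus $Q^n=I_{G_r}$ and one sets $k=sn$. The paper's route is more elementary and entirely self-contained; yours is more structural and makes transparent that $R|_{V_{G_r}}$ is literally a permutation, at the cost of importing the primitivity theorem for aperiodic strongly connected digraphs. Both reach $R^k|_{V_{G_r}}=I_{G_r}$ for some $k$, after which the endgame (your consequence~(ii), the paper's final paragraph) is essentially the same.
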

\begin{proof}
  Suppose there are distinct vertices $x,y\in V_G$ such that $N_G(x)\subseteq
  N_G(y)$. Then $R=I_G\cup(x,y)$, which is not functional, satisfies $G\muls
  R=G$. Thus $G\muls R=G$ is also solved by relations that are not
  automorphisms of $G$. This proves the 'only if' part.

  Conversely, suppose $G$ has property N. \textbf{Claim:} There is a
  $k\in\mathbb{N}$ such that $R^k\cap (V_{G_r}\times V_{G_r})=I_{G_r}$.

  For each $v_i\in V_{G_r}$ there is a walk of length $s_i\ge1$ from $v_i$
  to itself. Hence $(v_i,v_i)\in R^{s_i}$. Let $s$ be the least common
  multiple of the $s_i$. Then $(v_i,v_i)\in R^s$ for all $v_i\in V_{G_r}$.
  Define $Q:=R^s\cap(V_{G_r}\times V_{G_r})$. Thus $I_{G_r}\subseteq Q$ and
  moreover $Q^j\subseteq Q^{j+1}$ for all $j\in\mathbb{N}$. Since $V_{G_r}$
  is finite there is an $n\in\mathbb{N}$ such that $Q^{n+1}=Q^{n}$, and
  hence $Q^{2n}=Q^n$.  Let us write $R^{-i}(v):=\{u\in V_G:(u,v)\in
  R^i\}$. For $v\in V_{G_r}$ we have $R^{-i}(v)\in V_{G_r}$ (from Lemma
  \ref{lem:aut2}) and hence $Q^{-n}(v)=R^{-sn}(v)$ for all $v\in V_{G_r}$.
  If $Q^n\neq I_{G_r}$, then there are two distinct vertices $u,v\in
  V_{G_r}$, such that $(u,v)\in Q^n$. $N_G(u)\nsubseteq N_G(v)$ and $G=G\muls
  R^{sn}$ allows us to conclude that $R^{-sn}(u)\nsubseteq R^{-sn}(v)$ and
  $R^{-sn}(v)\nsubseteq R^{-sn}(u)$. Hence, there is a vertex $w$, such
  that $(w,u)\in Q^n$ and $(w,v)\notin Q^n$. From $(u,v)\in Q^n$ and
  $(w,u)\in Q^n$ we conclude $(w,v)\in Q^n\circ Q^n=Q^{2n}$, contradicting
  to $Q^{2n}=Q^n$.  Therefore $Q^n=I_{G_r}$.  Setting $k=sn$ now implies the
  claim.

  Finally, we show $V_{G_r}=V_G$. For any $v\in V_G\setminus V_{G_r}$,
  Lemma \ref{lem:aut3} implies the existence of $w\in V_{G_r}$ and $m\in
  \mathbb{N}$ such that $(w,v)\in R^{mk}$. However, we have claimed
  $R^{-k}(w)=\{w\}$, hence $R^{-mk}(w)=\{w\}$. This, however, implies
  $N_G(w)\subseteq N_G(v)$ and thus contradicts property N. Therefore,
  $V_G=V_{G_r}$ and moreover $R^k=I_G$. This $R$ is an automorphism.
\end{proof}

\section{R-Retraction}
\label{sect:retraction}

A particularly important special case of ordinary graph homomorphisms are
homomorphisms to subgraphs, and in particular so-called retractions: Let
$H$ be a subgraph of $G$, a \emph{retraction} of $G$ to $H$ is a
homomorphism $r: V_G\rightarrow V_H$ such that $r(x)=x$ for all $x\in
V_H$.

We introduced the graph cores in section \ref{sect:retract} as minimal
representatives of the homomorphism equivalence classes. The classical and
equivalent definition is the following: A \emph{(graph) core} is a graph
that does not retract to a proper subgraph.  Every graph $G$ has a unique
core $H$ (up to isomorphism), hence one can speak of $H$ as \emph{the core
  of} $G$, see \cite{Hell:04}.

Here, we introduce a similar concept based on relations between graphs.
Again to obtain a structure related to graph homomorphisms, in this section
we require all relations to have full domain unless explicitly stated
otherwise.

\begin{defn}
  Let $H$ be a subgraph of $G$. An \emph{R-retraction} of $G$ to $H$ is a
  relation $R$ such that $G\muls R=H$ and $(x,x)\in R$ for all $x\in
  V_H$. If there is an R-retraction of $G$ to $H$ we say that $H$ is a
  \emph{retract} of $G$.
\end{defn}

\begin{lemma}\label{lem:ret}
  If $H$ is an R-retract of $G$ and $K$ is an R-retract of $H$, then $K$ is
  an R-retract of $G$.
\end{lemma}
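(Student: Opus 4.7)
The plan is to prove transitivity of R-retractions by constructing the required relation as the composition of the two given R-retractions, and then verifying the two defining conditions using the composition law (Lemma~\ref{lem:compo}) together with a direct check on the diagonal.

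First, I would unwind the definitions. Since $H$ is an R-retract of $G$, there is a relation $R_1 \subseteq V_G \times V_H$ with $G \muls R_1 = H$ and $(x,x) \in R_1$ for every $x \in V_H$. Since $K$ is an R-retract of $H$, there is a relation $R_2 \subseteq V_H \times V_K$ with $H \muls R_2 = K$ and $(y,y) \in R_2$ for every $y \in V_K$. Note that $V_K \subseteq V_H \subseteq V_G$ because $K$ is a subgraph of $H$ and $H$ is a subgraph of $G$.

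Next, I would set $R := R_1 \circ R_2 \subseteq V_G \times V_K$ as the candidate R-retraction of $G$ to $K$. By the composition law (Lemma~\ref{lem:compo}),
\begin{equation*}
G \muls R = G \muls (R_1 \circ R_2) = (G \muls R_1) \muls R_2 = H \muls R_2 = K,
\end{equation*}
which verifies the first condition. For the diagonal condition, let $x \in V_K$. Then $x \in V_H$, so $(x,x) \in R_1$, and also $(x,x) \in R_2$ by hypothesis on $R_2$. By the definition of relational composition, $(x,x) \in R_1 \circ R_2 = R$. Hence $R$ is an R-retraction of $G$ to $K$.

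There is no real obstacle here: once the composition $R_1 \circ R_2$ is written down, the associativity already captured in Lemma~\ref{lem:compo} does all the structural work, and the diagonal condition is immediate from $V_K \subseteq V_H$. The only thing to be careful about is the direction of composition (ensuring domains and codomains match, i.e.\ $R_1$ lands in $V_H$ so that it can be composed with $R_2$), which is automatic from the setup.
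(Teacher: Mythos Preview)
Your proof is correct and essentially identical to the paper's: both compose the two given R-retractions and appeal to Lemma~\ref{lem:compo} for the first condition, then check the diagonal using $V_K\subseteq V_H$. The only difference is notation ($R_1,R_2$ versus the paper's $S,T$).
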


\begin{proof}
  Suppose $T$ is an R-retraction of $H$ to $K$ and $S$ is an
  R-retraction of $G$ to $H$.  Then $(G\muls S)\muls T=G\muls (S\circ
  T)=K$. Furthermore $(x,x)\in T$ for all $x\in V_K\subseteq V_H$, and
  $(u,u)\in S$ for all $u\in V_H$, hence $(x,x)\in S\circ T$ for all $x\in
  V_k$. Therefore $S\circ T$ is an R-retraction from $G$ to $K$.

\end{proof}

Hence, the following definition is meaningful.
\begin{defn}
  A graph is \emph{R-reduced} if there is no R-retraction to a proper
  subgraph. 
\end{defn}
Thus, we can also speak about ``the R-reduced graph of a graph $G$'' as the
smallest subgraph on which it can be retracted. We shall see below that the
R-reduced graph of a graph is always unique up to isomorphism.

We shall remark that R-reduced graphs differs from R-cores introduced
in section \ref{sect:retract}, thus we chose an alternative name used
also in homomorphism setting (cores are also called reduced graphs).

\begin{lemma}
  Let $G$ be a graph with loops and $o$ a vertex of $G$ with a loop on
  it. Then the R-reduced graph of $G$ is the subgraph induced by $\{o\}$.
\end{lemma}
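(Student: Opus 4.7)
My plan is to exhibit an explicit R-retraction of $G$ onto $H:=G[\{o\}]$ and then argue that $H$ admits no R-retraction to a proper subgraph, so that $H$ is R-reduced.

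First I would consider the relation $R=V_G\times\{o\}$, that is, $(x,o)\in R$ for every $x\in V_G$. By construction $R$ has full domain, and its image is $\{o\}$, which is the full vertex set of $H$. To verify $G\muls R=H$, note that the vertex set of $G\muls R$ is $\{o\}=V_H$ by Definition~\ref{def:muls}. For the edges, the only candidate edge in $G\muls R$ is $(o,o)$, and this edge is present precisely if there exists $(x,y)\in E_G$ with $(x,o),(y,o)\in R$; the loop at $o$ itself witnesses this with $x=y=o$. On the other side, $H$ consists of the single vertex $o$ together with the loop inherited from $G$, so $E_H=\{(o,o)\}$. Thus $G\muls R=H$. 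Finally $(o,o)\in R$, so the retraction condition $(x,x)\in R$ for all $x\in V_H$ is satisfied. Hence $R$ is an R-retraction of $G$ to $H$, and $H$ is an R-retract of $G$.

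It remains to show that $H$ is itself R-reduced. The only proper subgraph of $H$ would be the empty graph on no vertices. Since this section requires every R-retraction to have full domain, the single vertex $o\in V_H$ would need an image under any such relation, which is impossible when the target vertex set is empty. Therefore no R-retraction from $H$ to a proper subgraph exists, i.e., $H$ is R-reduced.

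The combination of the two steps gives the claim: $H=G[\{o\}]$ is an R-reduced R-retract of $G$, and hence (in the sense already indicated in the text) the R-reduced graph of $G$. I do not foresee any real obstacle; the only subtlety is being explicit that $(o,o)\in E_G$ forces $(o,o)\in E_H$ so that the image of $R$ reproduces the loop, and that the full-domain convention of this section rules out retraction to the empty graph.
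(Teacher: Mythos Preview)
Your argument is correct and its first half coincides exactly with the paper's: the relation $R=\{(x,o)\mid x\in V_G\}$ is an R-retraction onto the single looped vertex, and that one-vertex graph cannot retract further. Where you differ is in how you justify the definite article. You rely on minimality: since $G[\{o\}]$ has a single vertex, it is automatically a smallest R-retract and hence ``the'' R-reduced graph in the sense just introduced in the text. The paper instead adds an explicit converse paragraph: it takes an arbitrary R-reduced R-retract $H$ of $G$, observes that the loop at $o$ forces a loop somewhere in $H$, and then retracts $H$ onto that looped vertex (using Lemma~\ref{lem:ret} for transitivity) to conclude $H$ is itself a single looped vertex. This extra step establishes uniqueness directly for this lemma, which is prudent because at this point in the paper uniqueness of the R-reduced graph has only been announced, not yet proved. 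Your shortcut via minimality is legitimate, but the paper's converse makes the lemma self-contained.
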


\begin{proof}
  Let $O$ be the graph induced by $\{o\}$, and $R=\{(x,o)|x\in V_G\}$, then
  it is easily seen $R$ is a R-retraction of $G$ to $O$. Moreover, since
  $O$ has only one vertex, thus there is no R-retraction to its
  subgraphs. So $O$ is a R-reduced graph of $G$.

  Conversely, let $H$ be a R-reduced graph of $G$ and denote by $R$ the
  R-retraction from $G$ to $H$. Then a loop of $G$ must generate a loop of
  $H$ via $R$, denote it by $O$. Similarly to above, we see $O$ is a
  R-retract of $H$, hence it is also a R-retract of $G$ (by Lemma
  \ref{lem:ret}). Therefore the definition of R-reduced graph implies
  $H=O$.  
\end{proof}

In the remainder of this section, therefore, we will only consider graphs
without loops.

\begin{lemma} \label{lem:core} 
  If $G$ is R-reduced, then $G$ has property N.
\end{lemma}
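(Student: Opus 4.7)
The plan is to prove the contrapositive: if $G$ fails property~N, then $G$ is not R-reduced. Assume distinct vertices $x,y\in V_G$ satisfy $N_G(x)\subseteq N_G(y)$, and write $H=G[V_G\setminus\{x\}]$ for the proper induced subgraph obtained by deleting $x$. The goal is to exhibit an explicit R-retraction of $G$ onto $H$.

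The natural candidate is
\[
  R=\{(v,v):v\in V_G\setminus\{x\}\}\cup\{(x,y)\},
\]
i.e.\ the identity on $V_H$ extended by sending $x$ to $y$. The condition $(v,v)\in R$ for every $v\in V_H$ is built in, and $\image R=V_H$, so the only thing left is to verify $E_{G\muls R}=E_H$. This reduces to a finite case analysis of how a pair $(a,b)\in E_G$ with $(a,u),(b,v)\in R$ can produce a pair $(u,v)$: for each of $a,b$ there are only two options under $R$, namely either $a=u$ (with $u\ne x$), or $a=x$ with $u=y$, and similarly for $b$. The identity--identity case gives back precisely the edges of $H$, while the mixed cases are exactly where the hypothesis $N_G(x)\subseteq N_G(y)$ does its work: if $a=x$ and $u=y$, then $(x,b)\in E_G$ forces $b\in N_G(x)\subseteq N_G(y)$, hence $(y,b)=(u,v)\in E_G\cap(V_H\times V_H)=E_H$, and the case $b=x$ is symmetric.

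The only potential subtlety is a spurious loop: could $(y,y)\in E_{G\muls R}$? This would require $(a,b)\in E_G$ with $a\in\{x,y\}$ and $b\in\{x,y\}$, and each of the four combinations needs either a loop at $x$, a loop at $y$, or the edge $(x,y)\in E_G$; in the last situation, $y\in N_G(x)\subseteq N_G(y)$ would itself produce a loop at $y$. By the preceding lemma we are restricted to loopless graphs, so none of these can occur. Thus $G\muls R=H$, the relation $R$ is an R-retraction onto a proper subgraph, and $G$ is not R-reduced, completing the contrapositive. The main (and only) delicate point is handling the loop-at-$y$ case, which is precisely where the restriction to loopless $G$ pays off.
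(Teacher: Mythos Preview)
Your proof is correct and follows essentially the same approach as the paper: both argue the contrapositive by deleting $x$ and using the relation $R=\{(z,z):z\neq x\}\cup\{(x,y)\}$ as an R-retraction onto $G[V_G\setminus\{x\}]$. Your write-up is in fact more careful than the paper's, as you explicitly verify that no spurious loop at $y$ arises, invoking the standing loopless assumption that the paper established just before this lemma.
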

\begin{proof}
  Suppose there are two distinct vertices $x,y\in V_G$ with $N_G(x)\subseteq
  N_G(y)$ and consider the induced graph $G/x:=G[V_G\setminus\{x\}]$ obtained
  from $G$ by deleting the vertex $x$ and all edges incident with $x$. The
  relation $R=\left\{(z,z)| z\in V_G\setminus\{x\}\right\}\cup\{(x,y)\}$
  satisfies $G\muls R=G/x$: the first part is the identity on $G/x$ and
  already generates all necessary edges in $G/x$. The second part
  transforms edges of the form $(x,z)\in E_G$ to edges $(y,z)$. Since $R$
  has full domain and contains the identity relation restricted to
  $G/x$, it is an R-retraction of graph $G$, and hence $G$ is not R-reduced.
  
\end{proof}

\begin{pro} \label{pro:core}
  A graph $G$ is R-reduced if and only if it has no relation to a proper
  subgraph.
\end{pro}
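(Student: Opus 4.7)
The backward direction is immediate: an R-retraction is, in particular, a full-domain relation (by the convention of this section), so if $G$ is not R-reduced then the witnessing R-retraction already serves as a relation to a proper subgraph. The work lies in the forward direction: given a relation $R$ with $G\muls R=H$ for some proper subgraph $H$ of $G$, I must produce an R-retraction of $G$ onto some (possibly different) proper subgraph.

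The plan is to extract a homomorphism $f:G\to H$ from $R$ using Lemma~\ref{reho}, then iterate. Viewing $V_H\subseteq V_G$ lets me read $f$ as a map $V_G\to V_G$, and since $E_H\subseteq E_G$ this is in fact a homomorphism $G\to G$; hence every iterate $f^n$ is a homomorphism $G\to G$. The descending chain $V_G\supseteq f(V_G)\supseteq f^2(V_G)\supseteq\cdots$ stabilizes at some $G^{*}=f^N(V_G)$, on which $f$ is a surjection of a finite set to itself and therefore a permutation. Letting $m$ be the order of this permutation and choosing $k$ to be a multiple of $m$ with $k\geq N$, the iterate $r:=f^k$ satisfies $r(V_G)=G^{*}$ and $r(x)=x$ for all $x\in G^{*}$. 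A direct check, using that $r$ is a homomorphism and restricts to the identity on $G^{*}$, yields $G\muls r=G[G^{*}]$; consequently $r$ is an R-retraction of $G$ onto the induced subgraph $G[G^{*}]$.

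The crux is verifying that $G[G^{*}]$ is a \emph{proper} subgraph of $G$. When $V_H\subsetneq V_G$ this is automatic, since $G^{*}\subseteq V_H\subsetneq V_G$. The delicate case is $V_H=V_G$ with $E_H\subsetneq E_G$, where I must rule out $f$ being a bijection on $V_G$. If it were, then $(x,y)\mapsto(f(x),f(y))$ would be an injection $E_G\to E_H$, forcing $|E_H|\geq|E_G|$ and hence $E_H=E_G$, contradicting $H\neq G$. Therefore $f(V_G)\subsetneq V_G$, which pushes $G^{*}\subsetneq V_G$ and completes the argument. This last bijectivity obstruction is the main subtlety: everything else follows from standard iteration of an endomorphism of a finite structure.
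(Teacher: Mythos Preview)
Your proof is correct, and it takes a genuinely different route from the paper's.

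The paper argues by choosing a vertex-minimal $H$ admitting a relation from $G$, then shows $H$ has property~N via Lemma~\ref{lem:core}, and invokes the substantial Theorem~\ref{thm:auto1} to conclude that the restriction $\tilde R = R\cap(V_H\times V_H)$ is an automorphism of $H$; composing with $\tilde R^{+}$ turns $R$ into an R-retraction onto $H$. Your argument instead extracts a single homomorphism $f\subseteq R$ via Lemma~\ref{reho} and iterates it to an idempotent endomorphism---essentially the classical proof that every finite graph retracts onto its core. This is more elementary, since it bypasses Theorem~\ref{thm:auto1} entirely and uses only finiteness and a pigeonhole on edges. You are also more careful than the paper about the spanning case $V_H=V_G$, $E_H\subsetneq E_G$: the paper's proof tacitly passes to a ``proper induced subgraph with the minimal number of vertices'', which presupposes that such a subgraph exists, whereas your edge-counting argument handles this case directly. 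The trade-off is that the paper's approach stays within the relational framework and lands on the chosen minimal $H$ itself, while yours steps out into ordinary homomorphism territory and retracts onto the possibly different subgraph $G[G^{*}]$.
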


\begin{proof}
  The ``if'' part is trivial. Now we suppose that $H$ is a proper induced
  subgraph of graph $G$ with the minimal number of vertices such that there
  is a relation $R$ satisfying $G\muls R=H$. Then $H$ does not have a
  relation to a proper subgraph of itself. We claim that $H$ has property
  N; otherwise, one can find a vertex $u\in V_H$ and construct a retraction
  from $H$ to $H/{u}$ as in Lemma \ref{lem:core}, which causes a
  contradiction. Denote $\tilde{R}=R\cap (V_H\times V_H)$, then
  $K=H\muls\tilde{R}$ is a subgraph of $H$. From our assumptions on $H$ we
  obtain $K=H$.  By virtue of Theorem \ref{thm:auto1}, $\tilde{R}$ is
  induced by an automorphism of $H$. Hence $R\circ \tilde{R}^{+}$ is again
  a relation of $G$ to $H$ that contains the identity on $H$, i.e., it is
  an R-retraction.  
\end{proof}

Since graph cores are induced subgraphs and retractions are surjective they
also imply relations. Proposition \ref{pro:core} is also a consequence of
this fact. We refer to \cite{Hell:04} for a formal proof.

\begin{figure}
  \begin{center}
    \includegraphics[width=0.4\textwidth]{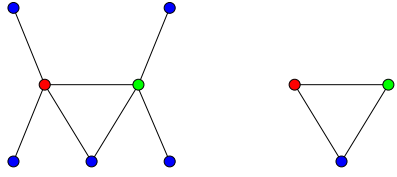}
  \end{center}
\caption{A graph $G$ and its core.}
\label{fig:core}
\end{figure}

We call $R$ a \emph{minimal R-retraction} if there is no
R-retraction $R'$ such that $R\supset R'\supset I_H$.

\begin{lemma} \label{fun}
  Let $H$ be an R-retract of $G$. Then any minimal
  R-retraction of $G$ to $H$ is functional.
\end{lemma}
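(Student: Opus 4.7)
The plan is a proof by contradiction. Suppose $R$ is a minimal R-retraction of $G$ to $H$ but $R$ is not functional. I will produce an R-retraction $R' \subsetneq R$ (with $I_H \subseteq R'$), contradicting minimality. The whole construction just removes a single well-chosen pair from $R$.

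First, since $R$ is not functional there exists $x \in V_G$ with $|R(x)| \geq 2$. I would then pick a pair $(x,u) \in R$ to delete subject to two constraints: (i) $(x,u) \notin I_H$, so that $I_H$ survives the deletion, and (ii) $R(x) \setminus \{u\} \neq \emptyset$, so that full domain is preserved. If $x \notin V_H$ then $(x,u) \notin I_H$ automatically for any $u \in R(x)$, and (ii) holds because $|R(x)| \geq 2$. If $x \in V_H$ then the R-retraction condition forces $(x,x) \in R$, so $x \in R(x)$; I would pick any $u \in R(x) \setminus \{x\}$, which is nonempty by $|R(x)| \geq 2$, and (ii) is satisfied because $x$ itself remains in $R'(x)$.

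Next I would verify that $R' := R \setminus \{(x,u)\}$ is still an R-retraction of $G$ to $H$. The inclusion $G \muls R' \subseteq G \muls R = H$ is immediate from the monotonicity of $\muls$ in the relation argument. For the reverse inclusion, the key observation is that every edge of $H$ is already produced by the identity part of $R'$: given $(p,q) \in E_H$, we have $p,q \in V_H$ and $(p,q) \in E_G$ since $H$ is a subgraph of $G$; together with $(p,p),(q,q) \in I_H \subseteq R'$, the definition of $\muls$ places $(p,q) \in G \muls R'$. Hence $E_H \subseteq G \muls R' \subseteq E_H$, and $R'$ has full domain and contains $I_H$ by construction, so it is a genuine R-retraction.

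Finally, since $R' \subsetneq R$ and $R'$ is an R-retraction containing $I_H$, this contradicts the minimality of $R$. The one subtlety, and what I expect to be the only real source of error if handled carelessly, is the case distinction in choosing $u$: one must not delete an identity pair (which would break the retraction property) and must not delete the last remaining image of $x$ (which would break full domain). Once those two cases are handled cleanly, the rest of the argument is essentially forced.
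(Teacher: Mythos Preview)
Your proof is correct and follows essentially the same approach as the paper: remove a single non-identity pair from $R$, then use $I_H \subseteq R' \subseteq R$ to sandwich $G\muls R'$ between $H$ and $H$. You are in fact slightly more careful than the paper in explicitly checking that full domain is preserved, but the argument is the same.
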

\begin{proof}
  Suppose $R$ is a minimal R-retraction of $G$ to $H$. If $R$ is not
  functional, then there exist distinct $x,y\in V_H$ such that
  $(u,x),(u,y)\in R$. Hence we could always pick a vertex from $\{x,y\}$
  which is different of $u$, w.l.o.g.\ suppose it is $x$. Then $R/(u,x)$ is
  an R-retraction, which contradicts minimality. To see this, set
  $R'=R/(u,x)$, then $R\supset R'\supset I_H$ and moreover $H=G\muls I_H
  \subseteq G\muls R'\subseteq G\muls R=H$, and thus $G\muls R'=H$.  
\end{proof}

\begin{pro}
  A graph is R-reduced if and only if it is a graph core.
\end{pro}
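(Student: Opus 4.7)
The plan is to prove the equivalence by showing each direction via contrapositive, essentially translating between the functional (retraction) and the general (R-retraction) viewpoints. The key observation is that an ordinary homomorphic retraction is exactly a functional R-retraction, so one direction is almost immediate; for the other direction, Lemma \ref{fun} lets us replace any R-retraction by a functional one, which is then a homomorphism.

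For the direction ``core $\Rightarrow$ R-reduced'', suppose $G$ is not R-reduced, so there is an R-retraction $R$ from $G$ to some proper subgraph $H$. By finiteness of the relation lattice we may take a minimal R-retraction $R' \subseteq R$ with $I_H \subseteq R'$ and $G \muls R' = H$ (note $G \muls I_H$ is the induced subgraph $G[V_H]$, and $G\muls I_H \subseteq G\muls R' \subseteq G\muls R = H$, which also incidentally forces $H = G[V_H]$ to be an induced subgraph). By Lemma \ref{fun}, $R'$ is functional and hence corresponds to a map $f : V_G \to V_H$ with $f(x)=x$ for every $x\in V_H$. Since $G \muls R' = H$, every edge $(x,y) \in E_G$ yields $(f(x),f(y)) \in E_H$, so $f$ is a homomorphism. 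Thus $f$ is a retraction of $G$ onto the proper subgraph $H$, contradicting the assumption that $G$ is a core.

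For the direction ``R-reduced $\Rightarrow$ core'', suppose $G$ is not a core, so there is a homomorphic retraction $r : G \to H$ to a proper subgraph $H$. Interpret $r$ as the functional relation $F = \{(x, r(x)) \mid x \in V_G\}$. Since $r$ is the identity on $V_H$, we have $(x,x) \in F$ for all $x \in V_H$, and in particular the image of $F$ is all of $V_H$ so that $G \muls F$ is well-defined with vertex set $V_H$. Because $r$ is a homomorphism we get $G \muls F \subseteq H$; conversely, every edge $(x,y) \in E_H \subseteq E_G$ is witnessed by $(x,x),(y,y) \in F$, so $H \subseteq G \muls F$. Hence $F$ is an R-retraction from $G$ onto the proper subgraph $H$, contradicting that $G$ is R-reduced.

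The only subtlety is verifying that the minimal R-retraction produced in the first direction truly retracts onto the same proper subgraph $H$ (rather than onto some even smaller graph), but this is guaranteed because $I_H \subseteq R'$ squeezes $G \muls R'$ between $G[V_H]$ and $H$, forcing equality and incidentally showing $H$ is induced. Everything else is a direct unpacking of the definitions and a clean appeal to Lemma \ref{fun}.
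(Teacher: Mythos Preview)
Your proof is correct and follows essentially the same route as the paper: in one direction you pass to a minimal R-retraction and invoke Lemma~\ref{fun} to obtain a functional (hence homomorphic) retraction, and in the other you observe that a homomorphic retraction is already a functional R-retraction. Your write-up is simply more explicit than the paper's, in particular the squeeze $G[V_H]=G\muls I_H\subseteq G\muls R'\subseteq G\muls R=H$ spelling out that retracts are induced and that the minimal $R'$ still lands on $H$.
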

\begin{proof}
  If $H$ is R-reduced from $G$ there is an R-retraction from $G$ to $H$
  which can be chosen minimal and hence by Lemma~\ref{fun} is functional
  and hence is a homomorphism retraction. Conversely, a homomorphism
  retraction is also an R-retraction. Hence the R-reduced graphs coincide
  with the graph cores. 
\end{proof}

\begin{pro}
  Suppose $H$ is the core of graph $G$. If $H\muls R=K$ then there is a
  relation $R'$ such that $G\muls R'=K$. If $K\muls S=G$, then there is a
  relation $S'$ such that $K\muls S'=H$.
\end{pro}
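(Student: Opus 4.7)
The plan is to reduce everything to the existence of a functional R-retraction from $G$ to its core $H$, and then invoke the composition law (Lemma~\ref{lem:compo}) twice.

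First I would observe that since $H$ is the core of $G$, we know (by the proposition identifying R-reduced graphs with cores, together with Lemma~\ref{fun}) that there is a functional R-retraction $F\subseteq V_G\times V_H$ with $G\muls F=H$. Concretely, this is just the inclusion into $F$ of the pairs $(x,r(x))$ for a homomorphism retraction $r\colon G\to H$: since $H$ is an induced subgraph and $r$ fixes $V_H$ pointwise, every edge of $H$ is recovered in $G\muls F$, while the homomorphism property of $r$ guarantees $G\muls F\subseteq H$. This $F$ will be the same object used for both halves of the statement.

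For the first half, given $H\muls R=K$, I simply set $R':=F\circ R$. Then by the composition law,
\begin{equation*}
G\muls R' \;=\; G\muls(F\circ R) \;=\; (G\muls F)\muls R \;=\; H\muls R \;=\; K,
\end{equation*}
as desired. For the second half, given $K\muls S=G$, I set $S':=S\circ F$. Again by the composition law,
\begin{equation*}
K\muls S' \;=\; K\muls(S\circ F) \;=\; (K\muls S)\muls F \;=\; G\muls F \;=\; H.
\end{equation*}

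There is essentially no obstacle here beyond identifying the right auxiliary object. The only subtlety worth spelling out is the verification that the retraction $r$ really does give $G\muls F=H$ (and not merely $G\muls F\subseteq H$), which uses both the fact that $H$ is an \emph{induced} subgraph of $G$ and that $r$ restricts to the identity on $V_H$. Once that is in place, both implications follow from a single application of Lemma~\ref{lem:compo}.
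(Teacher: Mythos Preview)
Your proof is correct and follows essentially the same approach as the paper: both arguments fix a relation from $G$ to $H$ arising from the core retraction and then compose on the appropriate side using Lemma~\ref{lem:compo}. You are somewhat more explicit in justifying why $G\muls F=H$ holds exactly, whereas the paper simply invokes the existence of such a relation $R_1$ directly from $H$ being the core of $G$.
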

\begin{proof}
  Since $H$ is the core of graph $G$, there is a relation $R_1$ such that
  $G\muls R_1=H$. If $H\muls R=K$ we have $G\muls R_1\muls R=K$ and 
  $R'=R_1\circ R$ satisfies $G\muls R'=K$.  If $K\muls S=G$ we have
  $K\muls S\muls R_1=H$ and $S'=S\circ R_1$ satisfies $K\circ S'=G$.
\end{proof}

\subsection{Cocores}

In the classical setting of maps between graphs, one can only consider
retractions from a graph to its subgraphs, since graph homomorphisms of an
induced subgraph to the original graph are just the identity maps.  In the
setting of relations between graphs, however, it appears natural to
consider relations with identity restriction between a graph and an induced
subgraph. This gives rise to notions of R-coretraction and R-cocore in
analogy with R-retractions and R-reduced graphs.

\begin{defn}
  Let $H$ be a subgraph of graph $G$. An \emph{R-coretraction} of $H$ to $G$ is
  a relation $R$ such that $H\muls R=G$ and $(x,x)\in R$ for all $x\in
  V_H$. We say that $H$ is an \emph{R-coretract} of $G$.
\end{defn}

\begin{lemma}
  If $H$ is an R-coretract of graph $G$ and $K$ is an R-coretract of $H$, then
  $K$ is an R-coretract of $G$.
\end{lemma}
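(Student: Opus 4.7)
The plan is to mirror the proof of Lemma~\ref{lem:ret} (for R-retractions), reversing the direction of the composition. Given that $K$ is an R-coretract of $H$, fix a relation $T$ with $K\muls T=H$ and $(x,x)\in T$ for every $x\in V_K$. Given that $H$ is an R-coretract of $G$, fix a relation $S$ with $H\muls S=G$ and $(y,y)\in S$ for every $y\in V_H$. The natural candidate for an R-coretraction of $K$ to $G$ is the composite $T\circ S$.

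First I would verify the edge condition using the composition law (Lemma~\ref{lem:compo}):
\begin{equation*}
  K\muls (T\circ S) \;=\; (K\muls T)\muls S \;=\; H\muls S \;=\; G.
\end{equation*}
Next I would check the identity condition. Since $K$ is a subgraph of $H$, we have $V_K\subseteq V_H$. For any $x\in V_K$, the defining property of $T$ gives $(x,x)\in T$, while $x\in V_H$ together with the defining property of $S$ gives $(x,x)\in S$. Taking $z=x$ as the intermediate vertex, we conclude $(x,x)\in T\circ S$.

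Combining the two, $T\circ S$ is an R-coretraction of $K$ to $G$, so $K$ is an R-coretract of $G$, as claimed. There is no real obstacle here; the only subtlety is remembering that $V_K\subseteq V_H$ is automatic from the subgraph hypothesis, which is what allows the identity pair $(x,x)$ for $x\in V_K$ to survive both $T$ and $S$ when composing.
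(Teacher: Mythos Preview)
Your proof is correct and follows essentially the same approach as the paper's: you form the composite $T\circ S$, apply the composition law (Lemma~\ref{lem:compo}) to get $K\muls(T\circ S)=G$, and use $V_K\subseteq V_H$ to verify that $(x,x)\in T\circ S$ for every $x\in V_K$. The paper's argument is line-for-line the same, even down to the choice of names $T$ and $S$.
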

\begin{proof}
  Suppose $T$ is an R-coretraction of $K$ to $H$ and $S$ is an
  R-coretraction of $H$ to $G$. Then $(K\muls T)\muls S = K\muls(T\circ
  S)=G$. Furthermore $(x,x)\in T$ for all $x\in V_K\subseteq V_H$, and
  $(v,v)\in S$ for all $v\in V_H$, hence $(x,x)\in T\circ S$ for all $x\in
  V_K$. Therefore $T\circ S$ is an R-coretraction from $K$ to $G$.  
\end{proof}

Hence, the following definition is meaningful.
\begin{defn}
  An R-coretract $H$ of a graph $G$ is an \emph{R-cocore of $G$} if $H$
  does not have a proper subgraph that is an R-coretract of $H$ (and hence
  of $G$).
\end{defn}

\begin{figure}[htbp]
  \begin{center}
    \begin{tabular}{ccc}
      \includegraphics[width=0.15\textwidth, bb= 234 609 320 696]{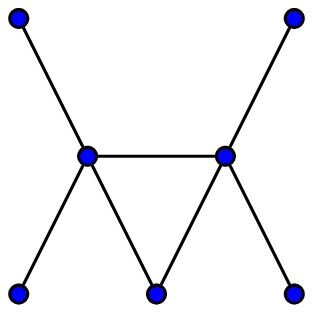}
      &\qquad &
      \includegraphics[width=0.15\textwidth, bb= 234 609 320 696]{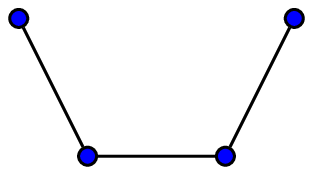}
      \\
      G & & $\mathrm{cocore}(G)$ \\
    \end{tabular}
  \end{center}
  \caption{A graph and its cocore}
  \label{fig:cocore}
\end{figure}

Clearly, the reference to $G$ is irrelevant: A graph $G$ is an
\emph{R-cocore} if there is no proper subgraph of $G$ that is an
R-coretract of $G$.  Similarly, we call $R$ to be a \emph{minimal
  R-coretraction} of $H$ to $G$ if there exists no R-coretraction $R'$,
such that $R'\subset R$.

\begin{lemma}\label{lem:cocore}
  Let $H$ be an R-coretract of graph $G$, and let $R$ be a minimal
  R-coretraction of $H$ to $G$. Then the restriction of $R$ to $H$ equals
  $I_H$.
\end{lemma}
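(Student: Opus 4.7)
The plan is a proof by contradiction: suppose there is a pair $(x,y) \in R$ with $x,y \in V_H$ and $x \neq y$. I will exhibit the strictly smaller relation $R' = R \setminus \{(x,y)\}$ and argue it is still an R-coretraction of $H$ to $G$, contradicting minimality of $R$. The easy bookkeeping is that $I_H \subseteq R'$ (because $x \neq y$ implies $(x,y) \notin I_H$), that $R'$ still has full image (the only vertex of $G$ that might lose a preimage is $y$, but $(y,y) \in R'$ still covers it), and that $H \muls R' \subseteq H \muls R = G$ is immediate from $R' \subseteq R$. The entire content lies in proving the reverse inclusion $G \subseteq H \muls R'$.

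The structural observation that unlocks this is $N_H(x) \subseteq N_H(y)$. Indeed, for any $b \in N_H(x)$, applying the definition of $\muls$ to the edge $(x,b) \in E_H$ together with $(x,y), (b,b) \in R$ produces the edge $(y,b)$ in $H \muls R = G$; since $y, b \in V_H$ and $H = G[V_H]$ is induced in $G$, this forces $(y,b) \in E_H$. This containment is the main obstacle of the proof, and the combination of $(x,y) \in R$, $I_H \subseteq R$, and inducedness of $H$ is exactly what produces it. Without it, removing a single pair from $R$ could genuinely kill edges of $G$.

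With this in hand, take any $(u,v) \in E_G$ with some $\muls$-derivation via $(a,b) \in E_H$ and $(a,u), (b,v) \in R$. If neither $(a,u)$ nor $(b,v)$ equals $(x,y)$, the same derivation works in $R'$. Otherwise assume $(a,u) = (x,y)$, so $a = x$ and $u = y$ (the other side is symmetric). Then $b \in N_H(x) \subseteq N_H(y)$ gives $(y,b) \in E_H$, and the alternative derivation $(y,b) \in E_H$ with $(y,y), (b,v) \in R'$ works provided $(b,v) \neq (x,y)$. The only exceptional subcase is $b = x$ and $v = y$, which means $H$ has a loop at $x$ and the target edge is the loop $(y,y)$; but then $(y,y) \in E_G$ together with inducedness forces $(y,y) \in E_H$, and the derivation $(y,y) \in E_H$ with $(y,y),(y,y) \in R'$ closes this case as well. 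Hence $H \muls R' = G$, contradicting minimality of $R$, and we conclude $R \cap (V_H \times V_H) = I_H$.
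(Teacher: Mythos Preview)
Your argument and the paper's follow the same strategy: establish $N_H(x)\subseteq N_H(y)$ for any $(x,y)\in R$ with $x,y\in V_H$, then reroute each edge-derivation of $G$ through pairs in $I_H$. You delete one offending pair, the paper deletes them all at once; that difference is cosmetic.

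There is, however, a genuine gap, and it is one your proof shares with the paper's. You write ``since $y,b\in V_H$ and $H=G[V_H]$ is induced in $G$'' and use this to pass from $(y,b)\in E_G$ to $(y,b)\in E_H$. But the definition of R-coretract only requires $H$ to be a \emph{subgraph} of $G$, not an induced one, and this extra hypothesis is essential: without it the lemma is actually false. Take $G=C_4$ on vertices $\{1,2,3,4\}$ and let $H$ be the spanning path with edges $(1,2),(2,3),(3,4)$; then $R=I_H\cup\{(2,4)\}$ is a minimal R-coretraction of $H$ to $G$ (one checks $H\muls R=C_4$, and removing $(2,4)$ kills the edge $(1,4)$), yet $R\cap(V_H\times V_H)\neq I_H$. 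The paper's proof stumbles at the same point: its asserted equality $H\muls\bigl(R\cap(V_H\times V_H)\bigr)=H$ is exactly the statement that $H$ is induced. If you add the hypothesis that $H$ is an induced subgraph --- which is plainly the intended setting for cocores, given the algorithm and the uniqueness argument that follow --- your proof is correct and essentially identical to the paper's.
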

\begin{proof}
  Suppose $R\cap (V_H\times V_H)\neq I_H$ and define $R_1=R\setminus
  \{(x,y)\in R:x,y\in V_H,x\neq y\}$. Then $H\muls R_1\subseteq H\muls
  R=G$. We claim that $H\muls R_1=H\muls R$ and thus $R_1$ is an
  R-coretraction of $H$ to $R$, contradicting the minimality of $R$.

  To prove this claim, it is sufficient to show that any edge $e\in E_G$ is
  contained in $H\muls R_1$. If $e$ is not incident with any vertex in
  $V_H$ or $e\in E_H$, the conclusion is trivial. So we only need to
  consider $e=(z,u)$ with $z\in E_H$ and $u\in V_G\setminus V_H$. Since
  $G=H\muls R$, one can find $x_1,x_2\in V_H$ such that $(x_1,z),(x_2,u)\in
  R$ and $(x_1,x_2)\in E_H$. Because $H\subseteq H\muls\big(I_H\cup
  (x_1,z))\big)\subset H\muls\big(R\cap (V_H \times V_H)\big)=H$, we get
  $N_H(x_1)\subseteq N_H(z)$. It follows that $(z,x_2)\in E_H$ and
  hence $e=(z,u)\in G\muls R_1$.
\end{proof}

Like R-reduced graphs, R-cocores satisfy a stringent condition on their
neighborhood structure.

\begin{defn} \label{nei}
  A graph $G$ satisfies property N* if, for every vertex $x\in V_G$,
  there is no subset $U_x\subseteq V_G\setminus\{x\}$ such that
  \begin{equation}
    N_G(x)=\bigcup_{y\in U_x} N_G(y)
  \end{equation}
\end{defn}
In other words, no neighborhood can be represented as the union of
neighborhoods of other vertices of graph $G$.

\begin{pro} \label{cocore} $G$ is an R-cocore if and only if $G$ has
  property N*.
\end{pro}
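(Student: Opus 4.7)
The plan is to prove both directions of the biconditional by contraposition, with the central tool being the minimality given by Lemma \ref{lem:cocore} in one direction and an explicit construction of an R-coretraction in the other.

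For the direction ``$G$ is an R-cocore $\Rightarrow$ property N*'', I would assume N* fails and exhibit a proper subgraph that is an R-coretract. Let $x \in V_G$ and non-empty $U_x \subseteq V_G\setminus\{x\}$ satisfy $N_G(x) = \bigcup_{y\in U_x} N_G(y)$. Set $H=G[V_G\setminus\{x\}]$ and $R = I_H \cup \{(y,x) : y \in U_x\}$, and I would verify $H\muls R = G$, which together with $H \subsetneq G$ contradicts the R-cocore property. The critical preliminary step is showing $U_x$ is independent in $G$ (hence in $H$): if $y_1\sim y_2$ in $U_x$, then $y_1 \in N_G(y_2) \subseteq N_G(x)$, so $y_1 \sim x$, so $x \in N_G(y_1) \subseteq N_G(x)$, contradicting looplessness (the standing assumption in this section). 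With independence in hand, the case analysis on edges of $H\muls R$ is routine: edges within $V_H$ coincide with $E_H$ because the only way to land in $V_H$ under $R$ is via the identity; edges between $x$ and $v\in V_H$ exist iff $v \in \bigcup_{y\in U_x} N_H(y) = N_G(x)\setminus\{x\} = N_G(x)$; and the potential loop at $x$ would need $(a,b)\in E_H$ with $a,b\in U_x$, which is precluded by independence.

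For the direction ``property N* $\Rightarrow$ $G$ is an R-cocore'', I would assume $G$ has a proper subgraph $H$ that is an R-coretract, and find a vertex violating N*. By Lemma \ref{lem:cocore} I may take the R-coretraction $R$ minimal, so $R\cap(V_H\times V_H)=I_H$. Pick any $v\in V_G\setminus V_H$ and define $U_v = \{y\in V_H : (y,v)\in R\}$; fullness of the image in Definition \ref{def:muls} forces $U_v\ne\emptyset$, and clearly $U_v \subseteq V_G\setminus\{v\}$. The core of this direction is establishing $N_G(v)=\bigcup_{y\in U_v} N_G(y)$. For the containment $\supseteq$, take $y\in U_v$ and $w\in N_G(y)$: the edge $(y,w)\in E_{H\muls R}$ factors as $(a,b)\in E_H$ with $(a,y),(b,w)\in R$, and minimality forces $a=y$; then $(y,b)\in E_H$ together with $(y,v),(b,w)\in R$ produce $(v,w)\in H\muls R = G$. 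For $\subseteq$, take $w\in N_G(v)$: factor $(v,w)\in E_{H\muls R}$ through $(a,b)\in E_H$ with $(a,v),(b,w)\in R$, so $a\in U_v$; since $(a,a)\in I_H\subseteq R$ and $(b,w)\in R$ and $(a,b)\in E_H$, we obtain $(a,w)\in G$, i.e. $w\in N_G(a)$.

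The main obstacle is the independence step in the first direction. It is the single point where the loopless assumption actually bites: without it, the construction $R = I_H \cup \{(y,x):y\in U_x\}$ would introduce a spurious loop at $x$ in $H\muls R$ whenever $U_x$ contains two adjacent vertices, and the construction would fail outright. The rest of the argument is a mechanical case analysis combined with the structural reduction afforded by Lemma \ref{lem:cocore}.
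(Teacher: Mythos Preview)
Your proof is correct and follows essentially the same approach as the paper: both directions use the identical construction $R=I_H\cup\{(y,x):y\in U_x\}$ on $H=G\setminus\{x\}$ for one implication, and Lemma~\ref{lem:cocore} applied to $U_v=R^{-1}(v)$ for the other. You supply more detail than the paper does---in particular the independence of $U_x$ (which the paper leaves implicit) and the two-sided verification of $N_G(v)=\bigcup_{y\in U_v}N_G(y)$---but the strategy is identical.
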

\begin{proof}
  Consider a vertex set $U_x$ as in Definition \ref{nei} and suppose 
  that there is a vertex $x\in V_G$ such that $N_G(x)=\bigcup_{y\in U_x}
  N_G(y)$. Then the relation $R:=I\setminus(x,x)\cup\{(y,x):y\in U_x\}$ is an
  R-coretraction from $G/x$ to $G$. Thus $G$ is not a R-cocore.

  Conversely, suppose that $G$ is not an R-cocore, let $H$ be a coretract
  of $G$, and denote by $R$ a minimal R-coretraction of $H$ to $G$. Then,
  by Lemma \ref{lem:cocore}, $R\cap (V_H\times V_H)=I_H$.  Consider a
  vertex $v\in V_G\setminus V_H$ and set
  $R^{-1}(v)=\{x_1,\cdots,x_i\}$. Then $N(v)=\bigcup_i N(x_i)$, contradicting
  property N*. 
\end{proof}

\begin{pro}
  The R-cocore of $G$ is unique up to isomorphism.
\end{pro}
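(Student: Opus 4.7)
The plan is to identify the vertex set of any R-cocore of $G$ with the collection of \emph{atomic} open neighborhoods in $\mathcal{N}(G) := \{N_G(w) : w \in V_G\}$, where $N \in \mathcal{N}(G)$ is called atomic if it cannot be written as $\bigcup_{u \in U} N_G(u)$ with $N_G(u) \ne N$ for every $u \in U$. Once this canonical description is established, uniqueness up to isomorphism is immediate: any two R-cocores $H_1, H_2$ produce a neighborhood-preserving bijection $\phi : V_{H_1} \to V_{H_2}$, and since both $H_i$ are induced subgraphs of $G$, the symmetry of adjacency together with the equation $N_G(\phi(w)) = N_G(w)$ forces $\phi$ to be a graph isomorphism.

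For the setup, I fix an R-cocore $H$ together with a minimal R-coretraction $R : V_H \to V_G$, so that Lemma \ref{lem:cocore} gives $R \cap (V_H \times V_H) = I_H$. From this it follows that $H$ is an induced subgraph of $G$: any edge of $E_G$ between vertices of $V_H$ must, via $H \muls R = G$ and the identity restriction, come from an actual edge of $E_H$. Unpacking $H \muls R = G$ further yields the key formula
\[
  N_G(v) = \bigcup_{p \in R^{-1}(v)} N_G(p), \qquad R^{-1}(v) \subseteq V_H,
\]
for every $v \in V_G \setminus V_H$. Thus the family $\{N_G(v) : v \in V_H\}$ generates $\mathcal{N}(G)$ under union.

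The core of the argument is to prove (a) that every $v \in V_H$ has $N_G(v)$ atomic, and (b) that each atomic $N$ is realized by exactly one $v \in V_H$. For (a), given a putative decomposition $N_G(v) = \bigcup_{u \in U} N_G(u)$ with $N_G(u) \ne N_G(v)$ for every $u \in U$, substituting the key formula for each $u \in U \setminus V_H$ rewrites the right-hand side as $\bigcup_{p \in U'} N_G(p)$ with $U' \subseteq V_H$. If $v \notin U'$, intersecting with $V_H$ yields $N_H(v) = \bigcup_{p \in U'} N_H(p)$, contradicting property N* of $H$. If $v \in U'$, then $v \in R^{-1}(u)$ for some $u \in U \setminus V_H$, and the key formula gives $N_G(u) \supseteq N_G(v)$ while the putative decomposition gives $N_G(u) \subseteq N_G(v)$, so $N_G(u) = N_G(v)$, contradicting the choice of $U$. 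For (b), existence follows from picking any $w \in V_G$ with $N_G(w) = N$ and, if $w \notin V_H$, applying atomicity to $N = \bigcup_{p \in R^{-1}(w)} N_G(p)$ to extract some $p \in V_H$ with $N_G(p) = N$; uniqueness is immediate from property N*, since two such vertices in $V_H$ would share their $N_H$-neighborhoods.

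The main obstacle will be the sub-case $v \in U'$ in part (a): the subtle point there is that $v$ lying in its own expansion $R^{-1}(u)$ forces the coincidence $N_G(u) = N_G(v)$ rather than yielding a direct violation of property N*, and this must be recognized before the case analysis can close.
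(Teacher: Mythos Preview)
Your proposal is correct and follows essentially the same strategy as the paper: both arguments identify the vertex set of any R-cocore with a canonical subcollection of $\mathcal{N}(G)=\{N_G(w):w\in V_G\}$ that depends only on $G$, and deduce uniqueness from that. The paper phrases this subcollection as the unique \emph{minimal basis} of $\mathcal{N}(G)$ under union, while you phrase it as the set of \emph{atomic} neighborhoods; these are the same object.

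Where the two differ is in execution. The paper argues somewhat informally that the neighborhoods of the R-cocore form a minimal basis and then runs a separate combinatorial argument that any two minimal bases of a finite union-closed family coincide. You instead invoke Lemma~\ref{lem:cocore} to obtain the identity $N_G(v)=\bigcup_{p\in R^{-1}(v)}N_G(p)$ from a minimal R-coretraction, and use it directly to show (a) each $N_G(v)$ with $v\in V_H$ is atomic and (b) every atomic neighborhood is realized uniquely in $V_H$. Your route is more explicit about why $H$ is an induced subgraph and why the generating property holds, and the case split in (a)---especially the observation that $v\in U'$ forces $N_G(u)=N_G(v)$ rather than a direct N* violation---is handled cleanly. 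The final isomorphism step via $N_G(\phi(w))=N_G(w)$ and symmetry of adjacency is also correct.
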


\begin{proof}
  We denote by $\mathcal{N}$ the collection of all open neighborhoods of
  vertices in $G$, i.e.,
  $\mathcal{N}=\{N_G(x_1),N_G(x_2),\cdots,N_G(x_k)\}$, where
  $V_G=\{x_1,x_2,\cdots,x_k\}$. From the definition of the R-cocore we
  know that the subcollection $\mathcal{M}$ of $\mathcal{N}$ consisting of
  all the open neighborhoods of vertices in R-cocore is a basis of
  $\mathcal{N}$, i.e., any set in $\mathcal{N}$ can be expressed by the
  union of some sets in $\mathcal{M}$.  W.l.o.g., we denote the vertex set
  in a R-cocore $C$ of $G$ is $\{x_1,x_2,\cdots,x_m\}$ where $m\leq k$,
  then $\mathcal{M}=\{N_G(x_1),N_G(x_2),\cdots,N_G(x_m)\}$.  We claim that
  any element in $\{N_G(x_1),N_G(x_2),\cdots,N_G(x_m)\}$ cannot be
  expressed as the union of other elements, i.e., $\mathcal{M}$ is a
  minimal basis. Otherwise, w.l.o.g., suppose
  $N_G(x_1)=\cup_{x_k}N_G(x_k),x_k\in \{x_2,\dots,x_m\}$. For any $1\leq
  k\leq m$, $N_G(x_k)=N_C(x_k)$ or $N_G(x_k)=N_C(x_k)+\{x_i|(x_i,x_k)\in
  E_G, m+1\leq i\leq n\}$, so either $N_C(x_1)=\cup_{x_k}N_C(x_k),x_k\in
  \{x_2,\dots,x_m\}$ or $N_C(x_1)=\cup_{x_k}N_C(x_k)+\{x_i|i\in 1,\cdots,
  n\},x_k\in \{x_2,\dots,x_m\}$, the former contradicts to Proposition
  \ref{cocore}, which implies any element in
  $\{N_C(x_1),N_C(x_2),\cdots,N_C(x_m)\}$ cannot be expressed as the union
  of other elements, the latter is impossible because $\{x_i|i\in 1,\cdots,
  n\}\notin C$.

  Now we prove that this minimal basis is unique. Note that in
  $\mathcal{N}$ we view any vertex with the same neighborhood as the same,
  since any vertex in R-cocore has different neighborhoods.  Let us
  consider two minimal sub-collections $\mathcal{A},\mathcal{B}$. Neither
  contains the other by their minimality. Since everything is finite, let
  $A\in \mathcal{A}/\mathcal{B}$ be an element of minimal size.  Now
  $A$ can be expressed as a union of elements of $\mathcal{B}$, which all
  need to be of smaller cardinality than $A$ (or same but $A\notin
  \mathcal{B}$), but $\mathcal{A}$ then contains all of them, letting $A$
  be expressed by a union of elements of $\mathcal{A}$ contradicting the
  minimality of $\mathcal{A}$.
\end{proof}

These results allow us to construct an algorithm that computes the cocore
of given graph $G$ in polynomial time.  First observe that the cocore of a
graph $G$ that contains isolated vertices is the disjoint union of cocore
of the graph $G'$ obtained from $G$ by removing isolated vertices and the
graph consisting of a single isolated vertex. It is thus sufficient to
compute cocores for graphs without isolated vertices in Algorithm
\ref{alg1}.

\begin{algorithm}[htb]
\caption{The cocore of a graph}
\label{alg1}
\begin{algorithmic}[1]
  \REQUIRE ~~\\
  Graph $G$ with loops and without isolated vertices specified by its
  vertex set $V$ and the neighborhoods $N_G(i)$, $i\in V$.
  \FOR{$i\in V$}
    \STATE $W(i)= \emptyset$
    \FOR {$j\in V\setminus \{i\}$}
      \IF {$N(j)\subseteq N(i)$}
        \STATE  $W(i):= W(i)\cup N(j)$
      \ENDIF
     \ENDFOR
    \IF {$W(i)=N(i)$}
      \STATE delete $i$ from $V$
      \STATE $N(i)= \emptyset$
    \ENDIF
  \ENDFOR
\RETURN $G[V]$, the cocore of $G$.
\end{algorithmic}
\end{algorithm}

\begin{pro}
  Suppose $H$ is a cocore of $G$. If $K\muls R=H$, then there is a relation
  $R'$ such that $K\muls R'=G$. If $G\muls S=K$, then there is a relation
  $S'$ such that $H\muls S'=K$.
\end{pro}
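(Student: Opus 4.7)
The plan is to mimic the proof structure of the analogous proposition stated for cores earlier in the paper (``Suppose $H$ is the core of graph $G$. If $H\muls R=K$ then there is a relation $R'$ such that $G\muls R'=K$...''), only with the roles of $G$ and $H$ interchanged because we now have an R-coretraction pointing from $H$ into $G$ rather than a retraction going the other way.

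The key input is the definition of R-cocore: since $H$ is a cocore of $G$, $H$ is in particular an R-coretract of $G$, so there is a relation $T$ with $H\muls T = G$ (and moreover $(x,x)\in T$ for all $x\in V_H$, although we will not need the identity part for this proposition). Given this $T$, I would use only the composition law (Lemma~\ref{lem:compo}) to combine it with the given relations $R$ or $S$.

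For the first statement, assume $K\muls R = H$. Then
\[
 K\muls(R\circ T)=(K\muls R)\muls T = H\muls T = G,
\]
so setting $R' := R\circ T$ does the job. For the second statement, assume $G\muls S = K$. Then
\[
 H\muls(T\circ S)=(H\muls T)\muls S = G\muls S = K,
\]
so setting $S' := T\circ S$ does the job.

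There is no real obstacle here; the only point worth double-checking is that the composition law is applicable in both directions, which it is by Lemma~\ref{lem:compo}. Nothing about minimality of the R-coretraction $T$ or the property~N* characterization of cocores from Proposition~\ref{cocore} is needed — the existence of \emph{some} R-coretraction $T$ coming from the cocore relationship $H\muls T=G$ is the only ingredient.
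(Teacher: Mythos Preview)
Your proof is correct and essentially identical to the paper's own argument: the paper also takes the R-coretraction $R_1$ with $H\muls R_1=G$ and sets $R'=R\circ R_1$ and $S'=R_1\circ S$, appealing (implicitly) to the composition law. Your remark that neither minimality nor property~N* is needed is accurate.
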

\begin{proof}
  Since $H$ is a cocore of $G$, there exists an R-coretraction $R_1$ such
  that $H\muls R_1=G$. If $K\muls R=H$, then letting $R'=R\circ R_1$ implies
  $K\muls R'=G$. If $G\muls S=K$, we have $H\muls R_1\muls S=K$. Let
  $S'=R_1\circ S$, then $H\muls S'=K$.
\end{proof}

\section{Computational Complexity}
\label{sect:complexity}

In this section we briefly consider the complexity of computational
problems related to graph homomorphisms.  The \textit{homomorphism problem}
$\Hom(H)$ takes as input some finite $G$ and asks whether there is a
homomorphism from $G$ to $H$.  The computational complexity of the
homomorphism problem is fully characterized. It is known that $\Hom(H)$ is
NP-complete if and only if $H$ has no loops and contains odd cycles.  All
the other cases are polynomial, see \cite{Hell:04}.

The analogous problem for relations between graphs can be phrased as
follows: The \textit{full relation problem} $\FulRel(H)$ takes as input
some finite $G$ and asks whether there is a relation with full domain from
$G$ and asks whether there is a relation from $G$ to $H$.  We show that
this problem can be easily converted to a related problem on surjective
homomorphisms.  The \textit{surjective homomorphism problem} $\sHom(H)$
takes as input some finite $G$ and asks whether there is a surjective
homomorphism from $G$ to $H$.

Let $\leq_\text{P}^\text{Tur}$ indicate polynomial time Turing reduction.

\begin{thm} \label{thm:complex}
For finite $H$ our relation problem sits in the following relationship.
\begin{equation}
  \Hom(H)\leq_\text{P}^\text{Tur} \FulRel(H) 
  \leq_\text{P}^\text{Tur} \sHom(H)\,.
\end{equation}
\end{thm}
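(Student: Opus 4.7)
The plan is to obtain both reductions as single-query many-one reductions on simple auxiliary constructions.

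For $\Hom(H)\leq_\text{P}^\text{Tur} \FulRel(H)$, I would on input $G$ query the oracle about the disjoint union $G' = G\sqcup H$. A homomorphism $f:G\to H$ lifts to a full-domain, full-image relation $R=f\cup I_H$ (identifying the fresh copy of $H$ with itself through $I_H$), and because $G'$ carries no edges between its two parts the definition of $\muls$ cleanly decomposes $G'\muls R=(G\muls f)\cup(H\muls I_H)$; the first piece is contained in $E_H$ because $f$ is a homomorphism while the second equals $E_H$, so $G'\muls R=H$. Conversely, any relation $R$ with full domain satisfying $G'\muls R=H$ contains a homomorphism $g:G'\to H$ by Lemma~\ref{reho}, and the restriction of $g$ to $V_G$ is a homomorphism $G\to H$ because the induced subgraph of $G'$ on $V_G$ is exactly $G$.

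For $\FulRel(H)\leq_\text{P}^\text{Tur} \sHom(H)$, let $k=|V_H|$ and form the vertex blow-up $G^*$ with vertex set $V_G\times\{1,\dots,k\}$ and edge set $\{((x,i),(y,j)):(x,y)\in E_G\}$; query the surjective-homomorphism oracle on $G^*$. From a witness $R$ for $G$, I would pick surjections $\varphi_x:\{1,\dots,k\}\to R(x)$ (possible because $1\le|R(x)|\le k$) and define $f((x,i))=\varphi_x(i)$. Each edge $((x,i),(y,j))$ of $G^*$ then satisfies $(f((x,i)),f((y,j)))\in R(x)\times R(y)\subseteq E_{G\muls R}=E_H$, making $f$ a homomorphism; vertex-surjectivity of $f$ follows from the full-image property of $R$, and edge-surjectivity from the fact that every edge of $H$ lifts through some edge of $E_G$ via the corresponding surjections $\varphi_x,\varphi_y$. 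Conversely, a surjective homomorphism $f:G^*\to H$ produces the relation $R(x)=\{f((x,i)):i\in\{1,\dots,k\}\}$, which has full domain and image, and $G\muls R=H$ is verified by matching edges of $G^*$ with pairs in the corresponding $R(x)\times R(y)$ product and, conversely, lifting each edge of $H$ through edge-surjectivity of $f$.

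The only substantive point to check is the choice of the blow-up factor: $k=|V_H|$ is just large enough to host any relation $R$ (since $|R(x)|\le|V_H|$) while keeping $|V_{G^*}|=|V_G|\cdot|V_H|$ polynomial in $|V_G|$ for fixed $H$. Both reductions consist of a single oracle call on an instance of polynomial size, so they are actually many-one polynomial-time reductions, slightly stronger than the Turing reductions claimed in the statement.
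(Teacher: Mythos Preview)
Your proof is correct and follows essentially the same approach as the paper: for the first reduction you query on the disjoint union $G\sqcup H$ (the paper writes $G+H$) and use Lemma~\ref{reho} for the converse direction; for the second reduction you blow up each vertex of $G$ into $|V_H|$ copies, which is exactly the paper's construction of $G''$. Your argument is in fact more explicit than the paper's---you verify edge- and vertex-surjectivity directly rather than appealing to Corollary~\ref{DeCo}---and your observation that both reductions are actually many-one (not merely Turing) is a small strengthening.
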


\begin{proof}
  First we show that $\Hom(H)$ is polynomially reducible to $\FulRel(H)$.
  If there is a homomorphism from $G$ $H$, then there is also a surjective
  homomorphism from $G+H$ to $H$.  On the other hand, suppose $G$ has no
  homomorphism to $H$. From Lemma \ref{reho} we conclude that $G+H$ has no
  relation to $H$ since $G$ has no relation to $H$.

  The relation problem $\FulRel(H)$ is polynomially reducible to $\sHom(H)$.
  From Corollary \ref{DeCo} we know $G\muls R = H$ if and only if there is
  a graph $G'=G\muls R_D$ which has a full homomorphism to $G$ and has a
  surjective homomorphism to $H$.

  We construct $G''$, by duplicating all the vertices of $G$ precisely
  $|V_H|$ times.  It is easy to see that if $G'$ exists, we can also put
  $G'=G''$ because the surjective homomorphism can easily undo the
  redundant duplications.

  It remains to check whether there is surjective homomorphism from $G''$
  to $H$.  This gives the polynomial reduction from $\FulRel(H)$ to
  $\sHom(H)$.
\end{proof}

To our knowledge, $\sHom(H)$ is not fully classified. A recent survey of
the closely related complexity problem concerning the existence of vertex
surjective homomorphisms \cite{Bodirsky:12} provides some arguments why the
characterization of complexity is likely to be hard, see also \cite
  {Golovach:12}. We observe that the existence of a homomorphism from $G$
to $H$ is equivalent to the existence of a surjective homomorphism from
$G+H$ to $H$. Thus $\sHom(H)$ is clearly hard for all graphs for which
$\Hom(H)$ is hard, i.e., for all loop-less graphs with odd cycles.

Testing the existence of a homomorphism from a fixed $G$ to $H$ is
polynomial (there is only a polynomial number $|V_H|^{|V_G|}$ of possible
functions from $G$ to $H$). Similarly the existence of a relation from a fixed
$G$ to $H$ is also polynomial. In fact, an effective algorithm exists.
For fixed $G$ there are finitely many thin graphs $T$ which $G$ has relation
to.  The algorithm thus first constructs the thin graph of $H$ and then, using
a decision tree recognizes all isomorphic copies of all thin graphs $G$ has
relation to.

\section{Weak Relational Composition}
\label{sect:weak}

In this section we will briefly discuss the ``loop-free'' version, i.e.,
equations of the form $G\mulw R=H$.

Most importantly, there is no simple composition law analogous to
Lemma~\ref{lem:compo}. The expression
\begin{equation}
(G\mulw R)\mulw S = (S^+\circ (R^+\circ G\circ R)^{\iota}\circ S)^\iota
\end{equation}
does not reduce to relational composition in general.  For example, let
$G=K_3$ with vertex set $V=\{x,y,z\}$ and consider the relations
$R=\{(x,1),(z,1),(y,2)\}\subseteq \{x,y,z\}\times\{1,2\}$ and
$S=\{(1,x')(1,z')(2,y')\}\subseteq \{1,2\}\times \{x',y',z'\}$. One can easily
verify
\begin{equation}
(G\mulw R)\mulw S = P_2 \ne G\mulw(R\circ S)= K_3
\end{equation}
The most important consequence of the lack of a composition law is that
R-retractions cannot be meaningfully defined for the weak
composition. Similarly, the results related to R-equivalence heavily rely
on the composition law.

Nevertheless, many of the results, in particular basis properties derived
in section~\ref{sect:basic}, remain valid for the weak composition
operation. As the proofs are in many cases analogous, we focus here mostly
on those results where strong and weak composition differ, or where we need
different proofs. In particular, Lemma \ref{dec} also holds for the weak
composition.  Thus, we still have a result similar to corollary \ref{DeCo},
but the proof is slightly different.

\begin{cor}
Suppose $G\mulw R=H$. Then there is a set $C$, an injective relation
$R_D\subseteq \domain R\times C$, and a surjective relation
$R_C\subseteq C\times \image R$ such that
$G[\domain R]\mulw R_D\mulw R_C=H[\image R]$.
\end{cor}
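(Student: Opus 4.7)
The plan is to reuse the explicit decomposition from Lemma~\ref{dec} with $A := \domain R$: this produces a set $C$ (concretely $C = R$), an injective relation $R_D \subseteq A \times C$ with full domain, and a functional relation $R_C \subseteq C \times \image R$ whose image is $\image R$, such that $R = I_A \circ R_D \circ R_C$. These are exactly the properties demanded of $R_D$ and $R_C$ in the corollary, so it remains to verify the identity $G[\domain R] \mulw R_D \mulw R_C = H[\image R]$. Since Lemma~\ref{lem:compo} is unavailable for $\mulw$, the natural strategy is to route through the strong composition, using the observation $X \mulw S = (X \muls S)^\iota$ recorded immediately after Definition~\ref{def:mulw}.

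The key intermediate fact I would establish is $G[A] \mulw R_D = G[A] \muls R_D$. This holds because $G$ is simple and $R_D$ is injective: if a loop $(\alpha,\alpha)$ appeared in $G[A] \muls R_D$, there would be $(x,y) \in E_{G[A]}$ with $(x,\alpha),(y,\alpha) \in R_D$, and injectivity of $R_D$ would force $x = y$, producing a loop in $G[A]$ and contradicting simplicity. Hence no diagonal entries enter the intermediate graph, and the weak and strong products coincide at this first step.

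Combining this with the strong composition law (applied to the strong graph $G[A] \muls R_D$) yields
\begin{align*}
G[A] \mulw R_D \mulw R_C
&= (G[A] \muls R_D) \mulw R_C
= \bigl((G[A] \muls R_D) \muls R_C\bigr)^\iota \\
&= \bigl(G \muls (I_A \circ R_D \circ R_C)\bigr)^\iota
= (G \muls R)^\iota
= G \mulw R = H,
\end{align*}
where the middle equality also uses $G \muls I_A = G[A]$ together with Lemma~\ref{lem:compo}. Since Definition~\ref{def:mulw} forces $\image R = V_H$, we have $H[\image R] = H$, and the proof concludes. The only genuine obstacle is the missing associative composition for $\mulw$; the trick is to notice that the loop-freeness of $G[A] \muls R_D$ lets us defer the single diagonal removal all the way to the outermost step, so that the entire chain can be evaluated inside the strong calculus until the final $(\cdot)^\iota$.
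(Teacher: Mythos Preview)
Your proof is correct and follows essentially the same route as the paper: both invoke the decomposition from Lemma~\ref{dec}, both hinge on the identity $G[\domain R]\mulw R_D = G[\domain R]\muls R_D$, and both then push the computation through the strong calculus so that a single reflexive reduction at the end recovers $G\mulw R$. Your write-up is in fact more explicit than the paper's, since you justify the key identity via injectivity of $R_D$ and simplicity of $G$, whereas the paper simply asserts it.
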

\begin{proof}
From Proposition~\ref{dec} we know $R=I'\circ R_D\circ R_C\circ I''$.
And we know $G[\domain R]\mulw R_D=G[\domain R]\muls R_D$.
From the properties of $\mulw$, we have
$$\aligned
G[\domain R]\mulw R &=(R^+\circ G[\domain R]\circ R)^l\\
&=((R_D\circ R_C)^+\circ G[\domain R]\circ R_D\circ R_C)^l\\
&=(R_C^+\circ R_D^+\circ G[\domain R]\circ R_D\circ R_C)^l\\
&=(R_C^+\circ (R_D^+\circ G[\domain R]\circ R_D)\circ R_C)^l\\
&=(R_C^+\circ G[\domain R]\muls R_D\circ R_C)^l\\
&=(R_C^+\circ G[\domain R]\mulw R_D\circ R_C)^l\\
&=G[\domain R]\mulw R_D\mulw R_C\\
&=H[\image R]\endaligned\,.$$
\end{proof}

Assume $G\mulw R=H$ and let $H_1,\cdots,H_k$ the connected components of
$H$. From the definition of $\mulw$ and $\muls$, if we denote
$\tilde{H}=G\muls R$, then $\tilde{H}$ could be decomposed into the union
of connected components $\tilde{H}_i$($1\leq i\leq k$), such that
$(\tilde{H}_i)^\iota=H_i$.  Hence the conclusion of the proposition
\ref{ccs} also holds true for weak relations.

Lemma \ref{color} does not hold for weak relations. For example, there is a
weak relation of $K_5$ to $K_3$, but $\chi(K_5)=5>\chi(K_3)=3$.

Lemma \ref{path} and Lemma \ref{cycle} do not hold for weak relations. For
example, if $G$ is a graph consisting of a single isolated vertex isolated,
then $P_3\mulw R=G$ and $C_3\mulw R=G$, but there are no walk in $G$.

With respect to complete graphs, weak relational composition also behaves
different from strong composition. If $K_k\mulw R=H$ then $R(i)$ can
contain more that one vertex in $V_H$. Compared to Proposition 
\ref{prop:complete}, we also obtain a different result:

\begin{thm}
  There is a relation $R$ such that $K_k\mulw R = H$ if and only if every
  connected component of $\overline{H}$ is a complete graph, and the number
  of connected components of $\overline{H}$ containing at least 2 vertices
  is at most $k$.
  \end{thm}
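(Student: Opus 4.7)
The plan is to mimic the proof of Proposition \ref{prop:complete} but handle the extra slack created by the fact that $\mulw$ deletes loops. I would begin by introducing, for any candidate relation $R\subseteq V_{K_k}\times V_H$ and each index $i\in V_{K_k}$, the set $S_i=R(i)$ and the ``solo'' set $T_i=\{u\in V_H: R^{-1}(u)=\{i\}\}$. These $T_i$ are pairwise disjoint, and every vertex $u$ with $|R^{-1}(u)|\ge 2$ lies outside all of them.

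For the forward direction, the central observation is a precise characterization of non-edges. From the definition of $\mulw$, two distinct vertices $u,v\in V_H$ satisfy $(u,v)\notin E_H$ iff for every pair $i\neq j$ in $V_{K_k}$ we do \emph{not} have $(u\in S_i \wedge v\in S_j)$. Since $R^{-1}(u)$ and $R^{-1}(v)$ are nonempty (the image of $R$ is full), a short case analysis forces $R^{-1}(u)=R^{-1}(v)=\{i\}$ for some common $i$. Consequently, the edges of $\overline{H}$ are precisely the pairs $\{u,v\}$ lying together in some $T_i$. It follows that each $T_i$ with $|T_i|\ge 2$ is a clique-component of $\overline{H}$, and every remaining vertex (either alone in its $T_i$ or having $|R^{-1}(u)|\ge 2$) is an isolated vertex of $\overline{H}$. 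Hence every component of $\overline{H}$ is a complete graph, and the number of non-trivial components is bounded by the number of indices $i$, which is $k$.

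For the converse, let $C_1,\ldots,C_m$ be the components of $\overline{H}$ with $|C_j|\ge 2$, so $m\le k$, and let $I$ denote the vertices forming singleton components (i.e., vertices adjacent in $H$ to all other vertices). I would define $R$ by: $(j,u)\in R$ for all $u\in C_j$ and $1\le j\le m$; additionally, for each $u\in I$ I put $u$ into at least two distinct $S_i$'s — concretely, $(1,u)$ and $(2,u)$ if $k\ge 2$, using the slack $k\ge m$ if $m<k$. One then verifies directly from the construction that $R^{-1}(u)=\{j\}$ for $u\in C_j$ and $|R^{-1}(u)|\ge 2$ for $u\in I$. Applied to the characterization in the forward direction, this shows that $K_k\mulw R$ has exactly the same non-edges (among distinct vertices) as $H$, hence equals $H$ as a simple graph.

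The main obstacle is the backward construction for the singleton components of $\overline{H}$: these vertices need $|R^{-1}(u)|\ge 2$ in order to produce the required adjacencies in $H$, which relies on having at least two vertices available in $K_k$. In the degenerate case $k=1$ one must separately verify that the hypotheses force $H$ to be edgeless (so that $\overline{H}$ is either empty or a single complete component), and then the trivial assignment $R=\{1\}\times V_H$ suffices. Otherwise the construction above applies uniformly, completing the proof.
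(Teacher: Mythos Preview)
Your argument is essentially the paper's: your ``solo'' sets $T_i$ are exactly the paper's $U_i=\{u:R^{-1}(u)=\{i\}\}$, and your forward analysis (non-edges of $H$ are precisely the pairs sharing a common singleton preimage) matches theirs. For the backward construction the paper relates each isolated vertex of $\overline{H}$ to \emph{all} of $\{1,\dots,k\}$ rather than only to $\{1,2\}$, but this is an inessential variation; either works once $k\ge 2$.

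Your handling of the degenerate case $k=1$, however, contains a false step: the hypotheses do \emph{not} force $H$ to be edgeless. Take $H=K_2$; then $\overline{H}$ consists of two isolated vertices, so every component of $\overline{H}$ is (trivially) complete and there are zero components with at least two vertices, hence the right-hand side holds with $k=1$. Yet $K_1$ has no edges, so $K_1\mulw R$ is edgeless for every $R$, and $K_1\mulw R=K_2$ is impossible. In other words, you have correctly sensed that $k=1$ is delicate, but the resolution you propose does not go through. (The paper simply does not discuss this boundary case; its construction, like yours, tacitly needs $k\ge 2$ whenever $\overline{H}$ has isolated vertices.)
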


\begin{proof}
  If every connected component of $\overline{H}$ is a complete graph,
  denoted the vertex sets of the connected components containing at least
  $2$ vertices by $H_1,\dots, H_m$, $m\leq k$ and the vertices of $K_k$ by
  $1,\cdots,k$. Let $R=\{(i,u)|i=1,\cdots,k,u\in V_{H_i}\}\cup
  \{(j,v):1\leq j\leq k,v\in V_H\setminus \bigcup_{i=1}^m V_{H_i}\}$.  One
  easily checks that $K_k\mulw R=H$.

  Conversely, let $R$ be a relation satisfying $K_k\mulw R=H$. Consider
  the set $U_i=\{u\in V_H|R^{-1}(u)=\{i\}\}$. Then $u$ and $v$ are not 
  adjacent for arbitrary $u,v\in U_i$, while $u$ is adjacent to $w$ for 
  every $w\in V_H\setminus U_i$. Hence
  $\overbar{H}(U_i)$ is a connected component of $\overbar{H}$, which is
  also a complete graph. Given $w\in V_H\setminus \bigcup_{i=1}^m U_i$,
  $R^{-1}(w)$ must have at least 2 vertices in $K_k$, hence $w$ is adjacent
  to every vertex in $H$ except itself; in other words, $w$ is an
  isolated vertex in $\overbar{H}$. Therefore the number of connected
  components of $\overbar{H}$ containing at least 2 vertices is no more
  than $k$.
\end{proof}

The results in subsection \ref{revers} also remain true for weak relations.

\section*{Acknowledgments}

We thank Rostislav Matveev for helpful discussions in the beginning of the
project and pointing out the decomposition as in Lemma \ref{DeCo}, and to
Jaroslav Ne{\v{s}}et{\v{r}}il for pointing out the equivalence of some
complexity problems and enlightening questions for further works.  L.Y.\ is
grateful to the Max Planck Institute for Mathematics in the Sciences in
Leipzig for its hospitality and continuous support.  This work was
supported in part by the NSFC (to L.Y.), the VW Foundation (to J.J.\ and
P.F.S.), the Czech Ministry of Education, and ERC-CZ LL-1201, and
CE-ITI of GA\v CR (to J.H).

\bibliographystyle{amcjou}
\bibliography{relations}

\end{document}